\theoremstyle{plain}
\newtheorem{thm}{Theorem}[section]
\newtheorem{cor}[thm]{Corollary}
\newtheorem{prop}[thm]{Proposition}
\newtheorem{rem}[thm]{Remark}
\newtheorem{ex}[thm]{Example}
\title{THE INVERSE-DEFORMATION APPROACH TO FRACTURE}
\author{Phoebus Rosakis$^{1,2}$, Timothy J. Healey$^{3,4}$ \& U\u{g}ur Alyanak$^3$}
\address[1]{Department of Mathematics and Applied Mathematics, University of Crete, Heraklion 70013 Crete, Greece. Email: rosakis@uoc.gr}
\address[2]{Institute of Applied and Computational Mathematics, Foundation for Research and Technology-Hellas, Voutes 70013 Crete, Greece}
\address[3]{Department of Mathematics, Cornell University, Ithaca, NY 14853, USA. Email: tjh10@cornell.edu}
\address[4]{Field of Theoretical and Applied Mechanics, Cornell University, Ithaca, NY 14853, USA}
\begin{document}
\maketitle
\begin{abstract}
 We propose  a one-dimensional,  nonconvex elastic constitutive model  with higher gradients that can predict spontaneous fracture at a critical load via a  bifurcation analysis. It overcomes the problem of discontinuous deformations without additional  field variables, such as damage or phase-field variables, and without \textit{a priori} specified surface energy. Our main tool  is the use  of the inverse deformation, which can be extended to be a piecewise smooth mapping even when the original deformation has discontinuities describing cracks opening.  We exploit this via the inverse-deformation formulation of finite elasticity due to Shield and Carlson, including higher gradients in the energy. The problem is amenable to a rigorous global bifurcation analysis in the presence of a unilateral constraint. Fracture under hard loading occurs on a bifurcating solution branch at a critical applied stretch level and fractured solutions are found to have surface energy arising from higher gradient effects.\end{abstract}

\numberwithin{equation}{section}  
\newcommand{\sw}{{W^\ast}}
\newcommand{\ssw}{{\buildrel \ast\ast\over W}}
\newcommand{\beq}{\begin{equation}}
\newcommand{\eeq}{\end{equation}}
\newcommand{\la}{{\lambda}}
\newcommand{\si}{{\sigma}}
\newcommand{\TT}{\mathcal{T}}
\newcommand{\ep}{{\varepsilon}}
\newcommand{\bpr }{\begin{proof}}
\newcommand{\epr}{\end{proof}}
\newcommand{\tred}{\textcolor{black}}
\newcommand{\vp}{{\varpi}}

\maketitle

\section{Introduction}

\noindent A major difficulty in modelling brittle fracture of solids is that  cracks are usually represented via discontinuous deformations. Whereas discontinuous gradients of the deformation can be described by nonlinear elasticity, and can be regularized by higher gradient terms in the stored energy, it is not clear how to deal with discontinuities of the deformation in a similar way. To circumvent this issue, various ingenious  fracture models have been developed that introduce the crack as a separate entity, \textit{a priori} endowed with properties that are distinct from the constitutive law of the bulk material, such as surface energy, cohesive laws, damage variables or phase fields, e.g., \cite{griffith,baren,ambr,franc,bourdin,triant}. 

A distinctive approach is Truskinovsky's treatment of fracture as a phase transition \cite{trusk}, where fracture results from nonconvexity of a two-well stored energy function, with the  transformation strain (location of the second well) going to infinity; it involves strains in the fracture zone that become unbounded  in that limit. It would be desirable to regularize this nonconvex problem using higher gradients, but this leads to unbounded energies.  So, while we embrace the idea of  fracture as a phase change, the question remains how to model fracture in this spirit, somehow avoiding singularities and additional constitutive ingredients for the crack.
\begin{figure}
  \centering
 \subfloat[]{\includegraphics[width=0.42\textwidth]{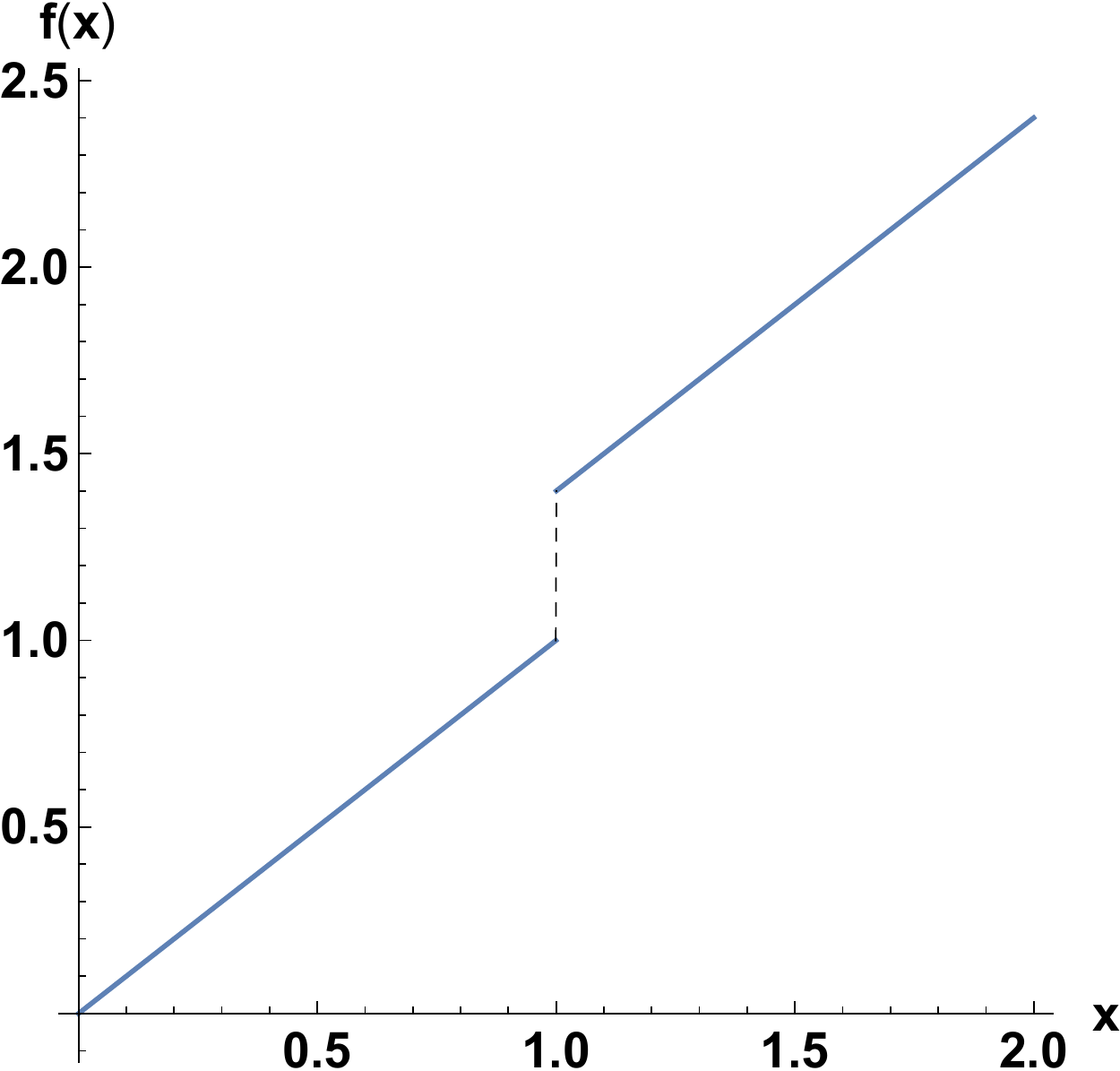}
  \label{Fig1a}}
  \hspace{0.8cm}
 \subfloat[]{\includegraphics[width=0.42\textwidth]{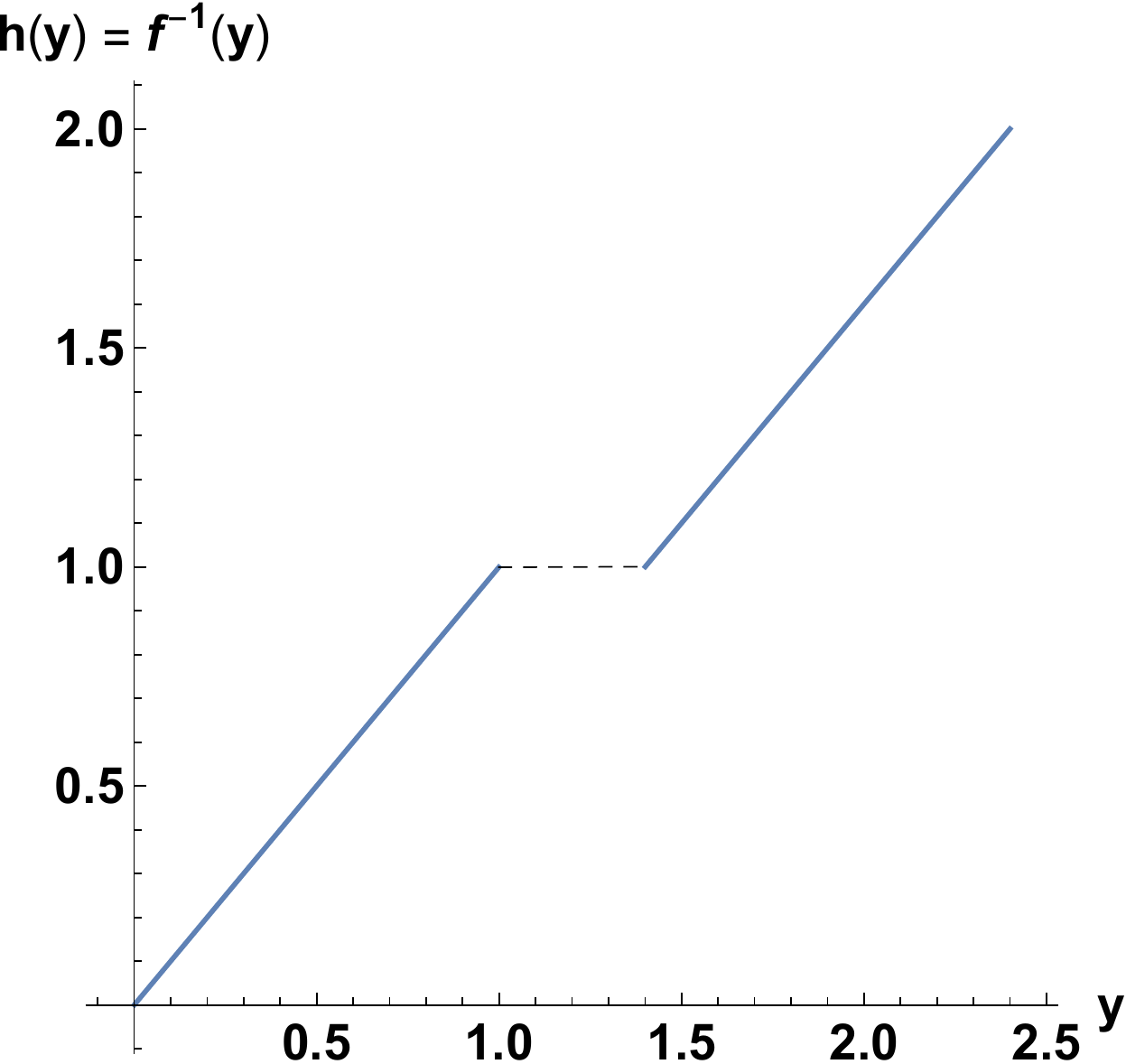}
  \label{Fig1b}} 
  \caption{(A) A cracked deformation (blue) with a vertical segment (black) attached to render the graph a continuous curve. (B)  The generalized inverse of the cracked deformation. The horizontal segment (black) is the opened crack in the deformed configuration.}
  \label{Fig1}
\end{figure}

Here we introduce a local, elastic, nonconvex constitutive model that  is amenable to regularization  by higher gradients. It can predict  fracture with the spontaneous appearance of discontinuous deformations via  bifurcation of equilibria, but does not involve additional field variables, such as damage or phase-field variables, or an \textit{a priori} specified surface energy for cracks. Our main tool is the inverse deformation, which can be extended to a be piecewise smooth mapping even when the original deformation has discontinuities describing crack opening.  We exploit this through the inverse-deformation formulation of finite elasticity due to Shield  \cite{shield}. 

\subsection*{Motivation} Our first observation concerns one-dimensional fractured deformations, viewed here as strictly monotone mappings  that involve at least one jump discontinuity (a crack). The graph of such a function has disjoint pieces that can be joined together by vertical segments of ``infinite slope''  as in  Fig.~\ref{Fig1a}. One can then easily construct a generalized inverse of this deformation by interchanging the abscissa and the ordinate. The graph of this inverse has strictly increasing pieces that are the graphs of the (standard) inverses of the deformation on either side of the crack. Moreover their graphs are connected by a horizontal segment, which corresponds to the ``inverse'' of the segment of infinite slope. The generalized inverse so constructed is a piecewise smooth mapping, where two or more ``phases'' of positive stretch are separated by one or more ``phases'' of zero stretch. To allow for the latter,  we extend the notion of deformation to admit nonnegative, as opposed to strictly positive, derivatives, so that mere non-strict monotonicity is required.

In a sense, the inverse deformation closes the crack, as it maps each crack interval to a single point, the reference location of the crack. In Fig.~\ref{Fig1}, the interval $1\le y\le 1.4$ is mapped back to the point $x=1$. Analogously, the original deformation opens the crack (at $x=1$ in the example), as it maps the single crack point in the reference configuration to the cracked interval  in the deformed configuration ($1\le y\le 1.4$ in the example, or the gap between the two opened-crack faces). A major advantage is that unlike the discontinuous original deformation,  the generalized inverse, Fig.~\ref{Fig1b}, is Lipschitz continuous   and  has gradient discontinuities,  like a  two-phase deformation \cite{ericksen}.  Here, intervals of positive inverse stretch are separated by intervals of zero inverse stretch. These we identify with the uncracked phase and  the cracked phase, respectively.  The length of a cracked-phase interval is nothing but  the crack opening displacement.

 \begin{figure}
  \centering
 \subfloat[]{\includegraphics[width=0.42\textwidth]{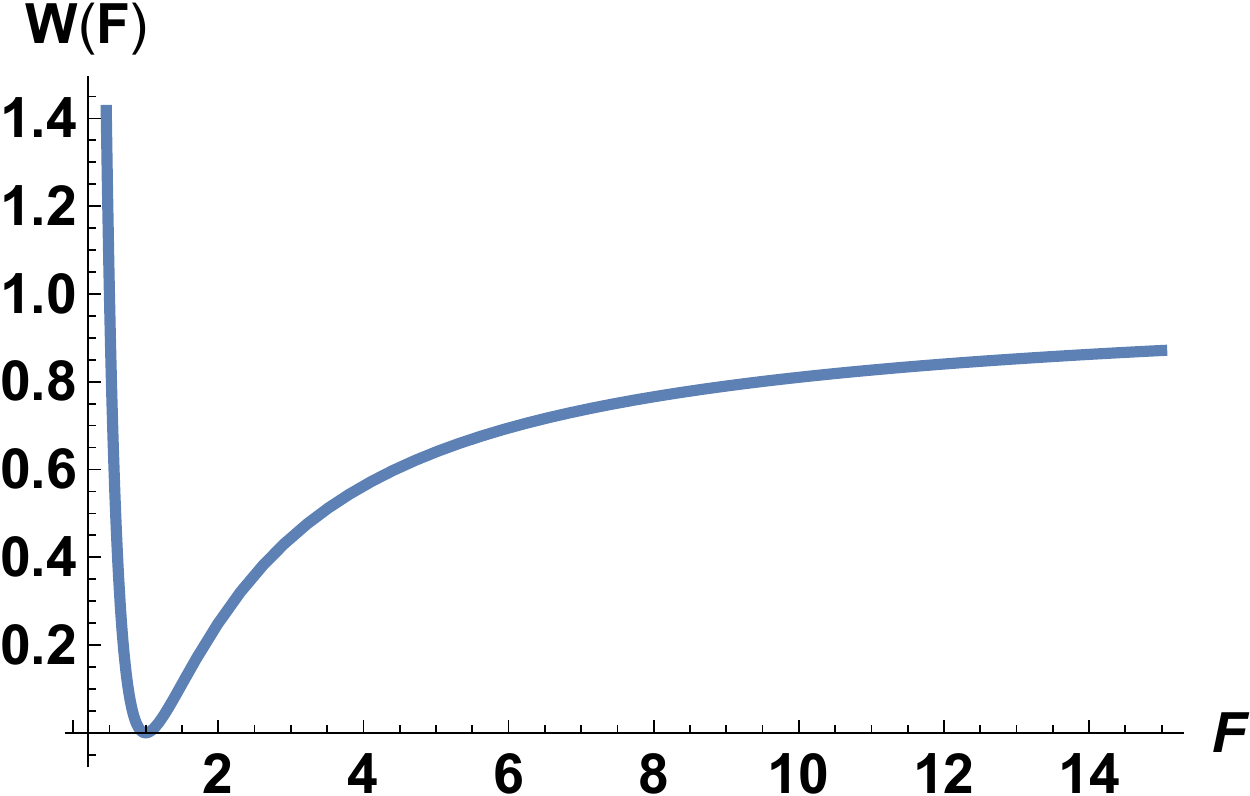}
  \label{Fig2a}}
  \hspace{0.8cm}
 \subfloat[]{\includegraphics[width=0.42\textwidth]{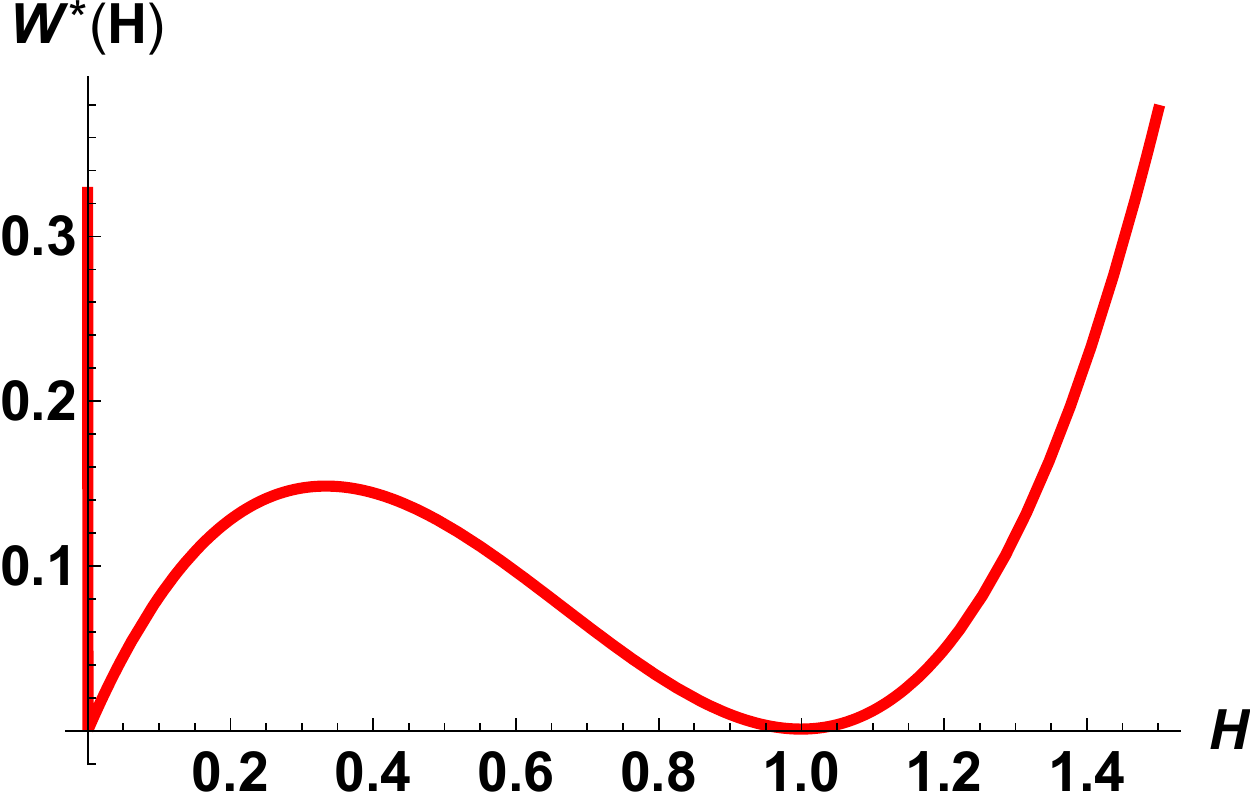}
  \label{Fig2b}} 
\caption{(A) A stored energy function of a material that undergoes brittle fracture  (cf. \eqref{WW} in Example \ref{ex5.10}). (B) The corresponding inverse stored energy function $\sw$.}
  \label{Fig2}
\end{figure}

Another crucial advantage of the inverse description is that the analogy with phase transitions 
extends naturally to the constitutive law itself, once we invoke the inverse deformation approach of Shield \cite{shield}, as we now explain. A material suffering brittle fracture in a one-dimensional setting is typically characterized by an elastic stored energy function of the form shown in Fig.~\ref{Fig2b}, having a convex well at the reference state, but eventually becoming concave and
approaching a horizontal asymptote from below as the stretch tends to infinity. The inverse
stored energy function  $ \sw $  is related to the (usual) stored energy function  $W$  by \cite{shield} 
$$\sw(H) = HW(1/H),\quad H>0,$$
and has the property that the elastic energy of a deformation  $ f : [0,1] \to [0,\la] $  can  be
written as
$$\int_0^1 W (f'(x))dx = \int_0^\la \sw (h'(y))dy,
$$
where  $ h = f^{-1} $  is the inverse deformation, and the \textit{inverse stretch} is 
$$H(y)=h'(y)=1/f'(x), \quad \hbox{where }x=f^{-1}(y).$$ 
For W as in Fig.~\ref{Fig2a}, the inverse stored energy  $ \sw $ 
would be as in Fig.~\ref{Fig2b},, where we now extend its domain of definition as follows
\beq\label{shield}  \sw (H)=\begin{cases}
HW(1/H),& H>0\\
0,& H=0\\
\infty,& H<0.\\
\end{cases}\eeq
Here we allow the possibility that the inverse stretch  $ H=h'= 0 $  (corresponding to the cracked phase as discussed above), but prohibit  $ h'< 0 $, which corresponds to  $ f'<0 $, namely,
orientation-reversing interpenetration. We recall that the case  $ h'= 0 $  corresponds to crack opening, not interpenetration. Instead of the usual
constraint  $ f' > 0 $,  we thus impose  $ h'\ge0 $  as a  constraint. When one visualizes this unilateral constraint as a vertical barrier just to the left of $ 0$ as part of the graph of  $ \sw $  (Fig.~\ref{Fig2b}), it is clear that the latter  has the form of
a two-well energy, with wells at $H=0$ and $H=1$. The inverse deformation of Fig.~\ref{Fig1b} is  a zero-energy one (global minimizer) provided that the rising portions  have slope $H=1$, since the flat portion has slope $H=0$.  In Fig.~\ref{Fig2b}, the well at $H=1$ corresponds to the undeformed, uncracked ``phase'', while the well at $H=0$ to the cracked ``phase''. \tred{The length of the interval in the  $H=0$  phase (horizontal segment  in Fig.~\ref{Fig2b}) is the crack opening displacement. In this sense, the  $ 0 $  phase is ``thin air'' or empty space between crack faces. To see this, consider local mass balance in terms of the inverse deformation, which reads  
\beq\label{mb}\rho_0 H=\rho,\eeq
where $\rho_0>0$ is the reference density and $\rho$ the deformed density. Clearly, for $H=0$ this implies $\rho=0$, absence of matter, or empty space.  Our viewpoint is that fracture corresponds to  two-phase inverse deformations minimizing this two-well energy, where intervals in the $H=0$ phase are opened cracks in the deformed configuration.}

A standard relaxation of this two-well  energy gives the convexification  $ \ssw $  of  $ \sw $  (together with the constraint  $ h'\ge0 $ ). This corresponds to a material that cannot resist compression (Fig.~\ref{Fig3a}). Also  $ \ssw $  is the inverse stored energy function of  $ \widehat W $, which has the form shown in Fig.~\ref{Fig3b}, corresponding to a material that cannot sustain tension. The drawback here is that minimizers  $ h $  of the inverse energy can have an arbitrary number of cracked intervals  $ H=0 $  alternating with intervals where  $ H=1 $,  corresponding to arbitrary positions of cracks in the reference configuration. Another problem is that this material breaks at the slightest pull.
\begin{figure}
  \centering
 \subfloat[]{\includegraphics[width=0.42\textwidth]{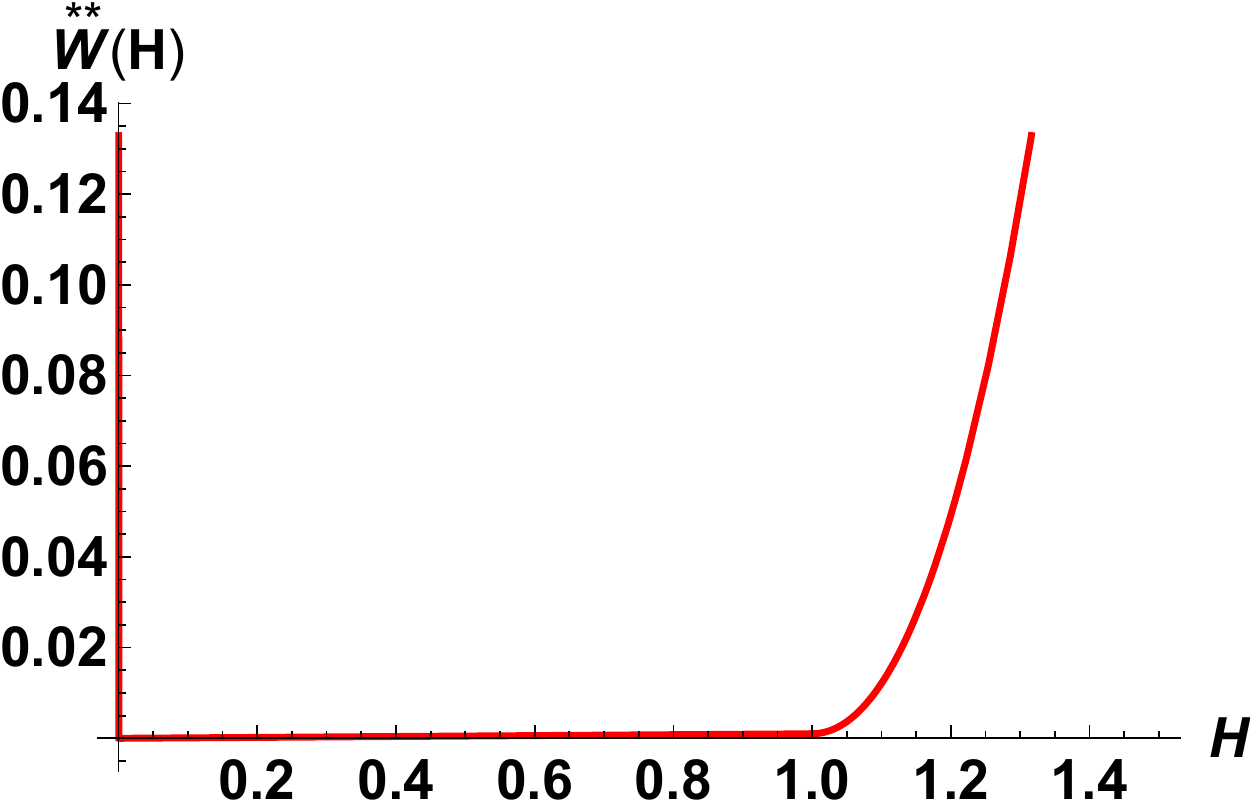}
  \label{Fig3a}}
  \hspace{0.8cm}
 \subfloat[]{\includegraphics[width=0.42\textwidth]{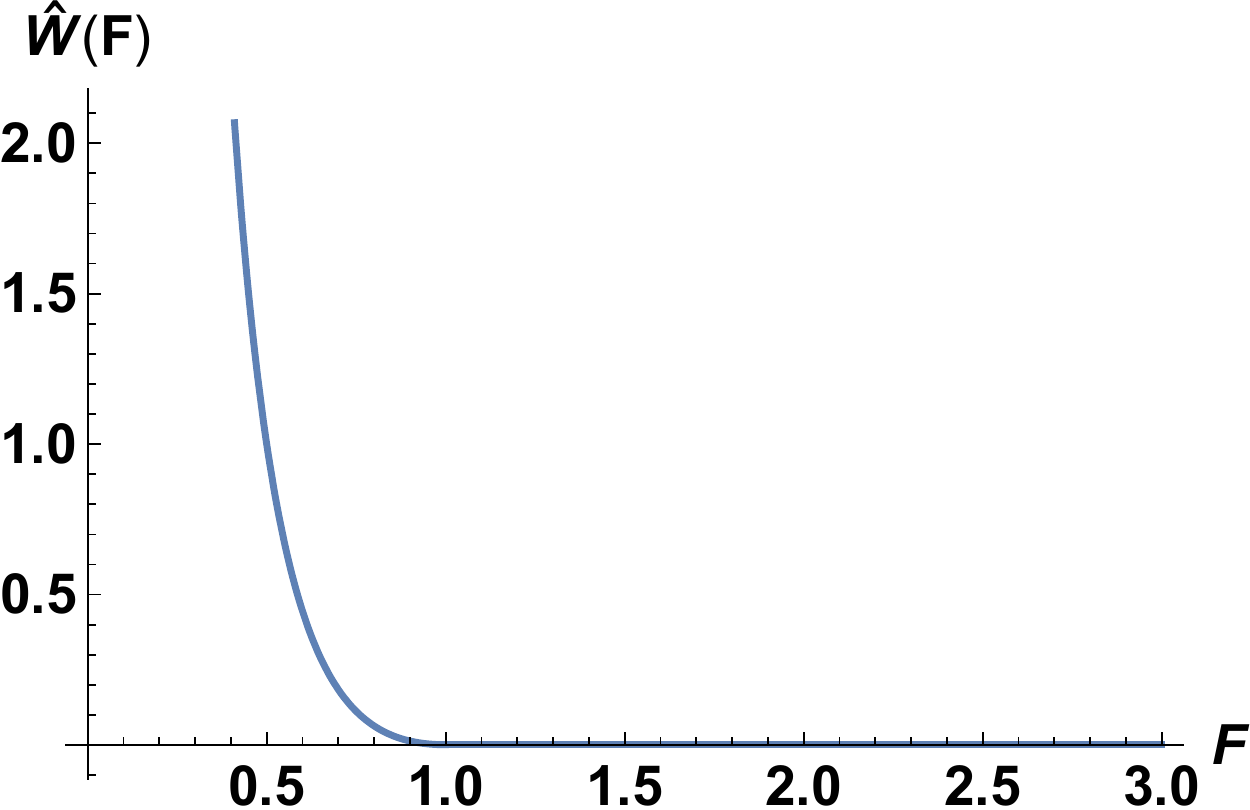}
  \label{Fig3b}}
  \caption{(A) The convexification   $ \ssw $  of  $ \sw $  of Fig.~\ref{Fig2b}. (B) The  stored energy function  $ \hat W $  whose inverse stored energy is  $ \ssw $.}
  \label{Fig3}
\end{figure}

To fix these problems, we exploit the final advantage of the inverse approach. The problem associated with  $ \sw $  can be regularized by the addition of higher gradients of the inverse deformation  $ h $  to the energy which would become
\beq\label{ehg} E_\ep\{h\}= \int_0^\la \sw (h'(y))dy
+\frac{\ep}{2}\int_0^\la  [h''(y)]^2 dy,\eeq
subject to the unilateral constraint  $h'\ge 0$ on $[0,\la]$. The analogous attempt to add  higher gradients of the original deformation  $ f $  to the energy  runs into difficulties because of the discontinuities of  $ f $  (Fig.~\ref{Fig1a}); such  deformations cannot be approximated by smooth functions and still maintain bounded  energy,  if higher-gradients are included  in the usual way.   \tred{A major strength of the inverse approach lies in the simplicity of the model energy \eqref{ehg} and the relative mathematical ease with which its equilibria are studied. Even though the original deformation is discontinuous, the inverse deformation can be extended to be Lipschitz.  In fact, we show that equilibria of the second-gradient energy \eqref{ehg} are $C^1$,  including intervals of zero inverse stretch $h'=0$,  corresponding to opened cracks in the deformed configuration. In the presence of such intervals, the original deformation is discontinuous.}

 \subsection*{Methods.}
\tred{The inverse formulation of Shield \& Carlson \cite{shield,carlson}, combined with inspiration from  Truskinovsky's  idea of fracture  as a phase transition \cite{trusk} ,  allows us to treat the problem as a constrained, but otherwise standard, two-well elasticity problem with higher gradients. We note that the model does not involve any special treatment for cracks, such as separate cohesive energies, different spatial scales for crack zones,  ``exotic'' spaces such as SBV, or additional phase fields.}This general approach is promising, and it begs the question of two or three dimensional formulations, which we pursue elsewhere \cite{rosakis}.  

 We study equilibria of the displacement problem in the inverse formulation, taking $E_\ep\{h\}$ in  \eqref{ehg}, to be the energy in terms of the inverse deformation $h$.  We employ techniques of global bifurcation theory \cite{rabin}, keeping in mind that stable branches of local energy minima may occur, while exploiting phase plane techniques in the spirit of \cite{cgs}. The only complication here is the unilateral constraint  $ h'\ge0 $  on the inverse deformation,  in an otherwise fairly standard  two-well problem with higher gradients like  \cite{cgs,lifsh,triant}. We formulate the problem as a variational inequality incorporating the constraint, and  employ the methods of  \cite{globi}.
 
\tred{To obtain quantitative information on bifurcating solution branches, we choose specific examples of the stored-energy function $W$ of the form shown in Fig.~\ref{Fig1b}, and either compute solutions using the bifurcation/continuation program AUTO \cite{auto}, or obtain branches analytically or semi-analytically in some cases.}

 \begin{figure}
  \centering
\includegraphics[width=0.8\textwidth]{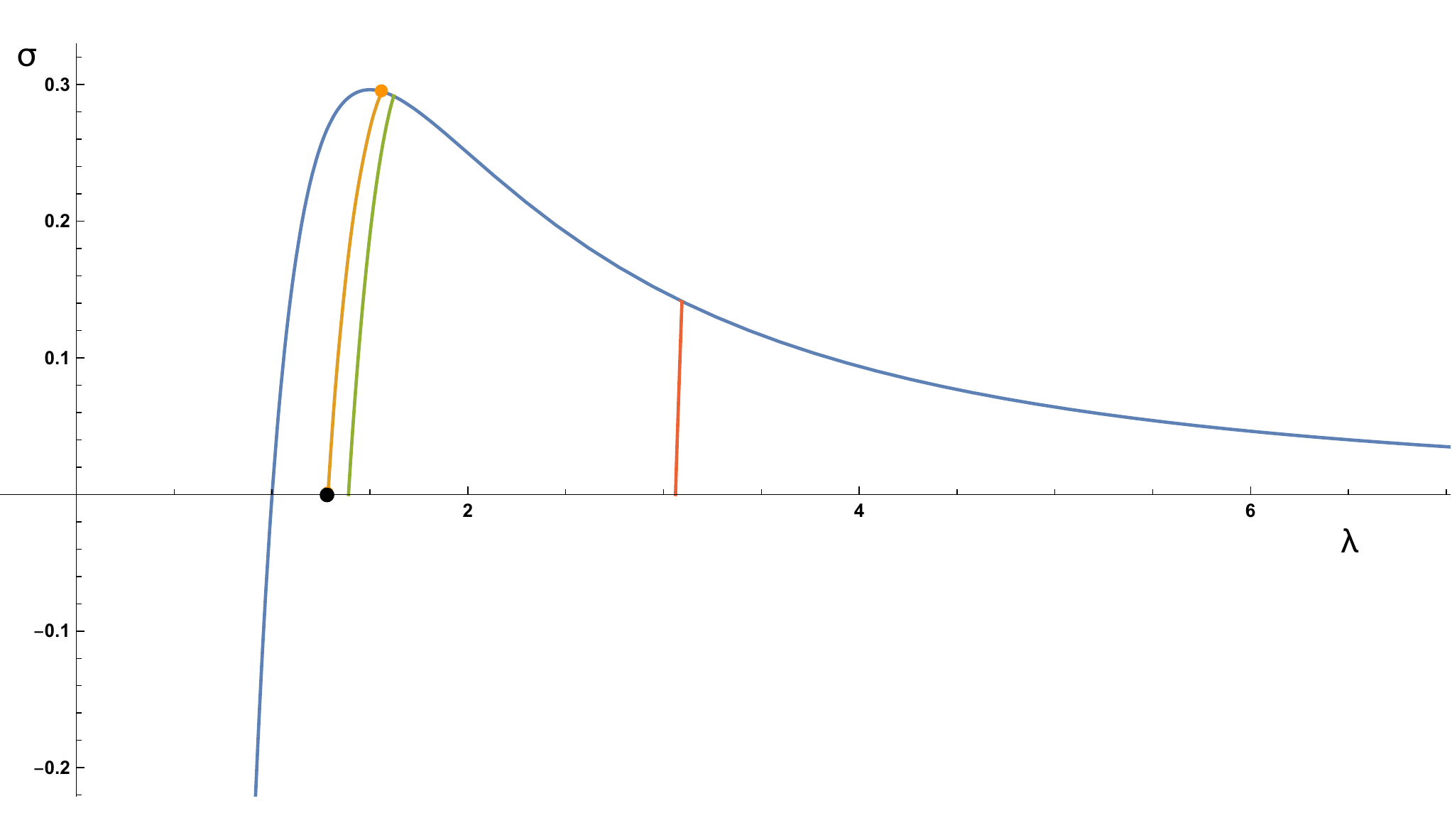}
  \caption{The trivial solution branch and first bifurcating branch for various values of  $ \ep $, represented in the $(\si,\la)$ or stress  \textit{vs} average-stretch plane  (for the special constitutive law \eqref{WW} in Example \ref{ex5.10}). Blue curve: trivial solution branch corresponding to homogeneous deformation. Orange, green and red curves, first bifurcating branch for different values of  $ \ep=2/49 $,  $ 2/25 $  and  $ 2 $, respectively. For example, if  $ \ep=2/49 $, the trivial branch is stable to the left of the orange dot (1st bifurcation point) and  unstable to the right.  The horizontal axis to the right of the black dot (lowest point of the orange curve) consists of stable broken solutions. Similarly for the other cases.}
  \label{Fig4}
\end{figure} 

 \subsection*{Results.}
 The results are consistent with our expectations of one-dimensional brittle fracture, and agree with some predictions of discrete models \cite{bdg}:  In particular, pulling on a bar with prescribed end displacement deforms it homogeneously until the end displacement reaches a critical level.  After that, the stress drops suddenly to zero and remains there during further elongation; the bar is broken See Fig.~\ref{Fig4}. On the end-load  \textit{vs}  average-stretch diagram this corresponds to the portion of the blue curve to the left of the orange dot (for  $ \ep=2/49 $). The blue curve corresponds to the trivial branch of solutions with uniform stretch. The orange dot is the first bifurcation point, just after the maximum of the stress-stretch curve for small  $ \ep $. Beyond that the homogeneous stretch solution (blue curve) is unstable. The first nontrivial branch of bifurcating solutions (orange curve) eventually connects to the zero-stress  axis at the black dot in Fig.~\ref{Fig4}. This point corresponds to initiation of fracture. 
 
 The first branch then continues along the horizontal zero-stress axis to the right of the black dot.  The bar breaks at one of the two ends, and the stress vanishes thereafter, as the end displacement is further increased. Points on the horizontal axis to the right of the black dot in Fig.~\ref{Fig4} correspond to broken solutions. The broken bar has vanishing stress and is virtually undeformed except for a transition layer, followed by a closed interval of zero inverse stretch, which corresponds to the opened crack. The length  of this interval equals the crack opening displacement. 
 
 Broken solutions are  endowed with  additional surface energy, because of higher gradients.  To leading order in the higher gradient coefficient  $ \ep $, this surface energy is determined explicitly given the stored energy function:
$$\hbox{Surface Energy}=\sqrt{\ep}\int_0^1 \sqrt{2W^\ast (H)} dH  = \sqrt{\ep}\int_1^\infty \sqrt{2W (F)/F^5} dF. $$
It  plays a role similar to the one posited by Griffith. 

The inverse deformation is smooth, but the original deformation is discontinuous at the broken end. All solutions with more  than one fracture are unstable; they arise from  higher-mode branches bifurcating off the homogeneous solution, that are all unstable.  

Longer bars are more brittle (break sooner and more suddenly) than shorter bars. This is  because rescaling shows that  the energy \eqref{ehg} of a bar of reference length  $ L $  and higher gradient coefficient  $ \ep $  is equal  to the energy of a bar of unit reference length and a higher gradient coefficient  $ \bar\ep=\ep/L^2 $, while the fracture stretch increases with  $ \ep$.  In particular, very short bars, or nanoparticles,  are stable in uniform stretch well beyond the maximum of the stress-stretch curve, as observed by Gao \cite{gao}.  
\par
\tred{Some of our predictions differ from the results of some other models.
 First, the crack faces are sharply defined points, and the crack is empty of matter, in contrast to damage or phase field models, e.g.,  \cite{bourdin}, where the crack is diffuse. This occurs despite the smoothening effect of higher gradients, and is related to the unilateral constraint  $ h'\ge0 $.
 Second, upon fracture, the stress suddenly drops suddenly to zero at a finite macroscopic stretch, and stays zero thereafter, instead of approaching zero for large elongation, which occurs in some cohesive-zone and nonlocal models, e.g., \cite{dPTrusk,triant}. This agrees with the common concept of brittleness. We note that the  stress-stretch constitutive relation underlying our model only asymptotically approaches zero as the stretch goes to infinity. The sudden drop to zero stress at finite applied stretch is a consequence of material instability and bifurcation to an inhomogeneous state.
Finally, our formulation and results are relevant to large deformations, the only setting in which the inverse-deformation approach makes sense.}

We now give an outline of the work. In Section 2 we consider the 
inverse-stretch formulation, presuming hard loading in the presence of both 
an integral constraint (for compatibility with imposed end displacements) and the unilateral constraint of nonnegative inverse stretch. 
We demonstrate the existence of a global energy minimizer, and 
 proceed to formulate the Euler-Lagrange variational inequality governing 
all equilibria. We reformulate the variational inequality accounting for the 
integral constraint and demonstrate that all equilibria are  $C^{1} $.  We end 
the section with some \textit{a priori} bounds. In Section 3 we prove the existence 
of global solution branches bifurcating from the trivial, homogeneous 
solution. We employ methods of global bifurcation for variational inequalities  \cite{globi} to obtain branches of nontrivial (non-homogeneous) solutions via the 
methodology of  \cite{rabin}, and we establish nodal properties of solutions as in 
 \cite{cran}. The latter, combined with the \textit{a priori} bounds, imply that all bifurcating 
branches of equilibria are unbounded. In Section 4 we obtain more 
detailed properties of solutions via qualitative phase-plane arguments. In 
particular, we show that each global solution branch has a bounded component
\tred{that connects the bifurcation point to another solution exhibiting the 
onset of fracture, marked by vanishing of the inverse stretch at some point.} The \textit{a priori} bounds of Section 2 play a key role here. 
Our qualitative analysis also reveals the structure of all further broken 
solutions on the complementary, unbounded component of the solution branch. \tred{These broken solutions are characterized by the presence of nonempty, closed intervals of zero inverse stretch}. We finish 
the section with some stability results. We demonstrate that the 
trivial solution is locally stable up to the first bifurcation point, and unstable beyond it, and we show that 
all \tred{bifurcating solutions of mode higher than 1} are also unstable. In Section 5 we convert our 
results, \textit{a posteriori}, to the original Lagrangian variables. In particular, 
we obtain the effective end-load  \textit{vs}  average stretch curve as in  \cite{lifsh} via projection of the first 
global solution branch. Here the homogeneous solution appears as the nominal 
constitutive law (stress-stretch relation). For large enough average stretch, we observe that the only possibility for an energy minimizer (under hard loading) is the broken solution along the first bifurcating branch.  We finish the section \tred{with a semi-analytical characterization of the first branch and the fracture stretch}, and some concrete results for specific models---some analytical and some numerical.

\section{Formulation and  A Priori Estimates}

We start with assumptions for  $ W $  that are typical of one-dimensional brittle fracture  and in accordance with the 
properties suggested in Fig.~\ref{Fig2}:
\begin{equation}
\begin{array}{l}
 W\in C^{3}(0,\infty );\mbox{\, }W(F)\nearrow \infty \mbox{\, as\, 
}F\searrow 0;\mbox{\, }W(F)\nearrow \gamma >0\mbox{\, as\, }F\nearrow \infty 
; \\ 
 W(1)=0;\mbox{\, }W(F)>0,\mbox{\, }F\ne 1; \\ 
 W\mbox{\, is\, strictly\, convex\, on\, }[0,1/\kappa )\mbox{\, and\, 
strictly\, concave\, on\, }(1/\kappa ,\infty ),\mbox{\, where\, \, }0<\kappa 
<1. \\ 
 \end{array}\label{eq-1}
\end{equation}
As a consequence of \eqref{shield}, and letting  $ \dot\sw(H)=d\sw(H)/dH $, it follows that
\begin{equation}
\begin{array}{l}
 W^{\ast }\in C^{3}(0,\infty );\mbox{\, }W^{\ast }(0)=W^{\ast 
}(1)=0;\mbox{\, }W^{\ast }(H)>0,\mbox{\, }H\ne 0,1;\mbox{\, }\Dot{{W}}^{\ast 
}(0^{+})=\gamma ; \\ 
 W^{\ast }\mbox{\, is\, strictly\, concave\, on\, }[0,\kappa )\mbox{\, and\, 
strictly\, convex\, on\, }(\kappa ,\infty ). \\ 
 \end{array}\label{eq-2}
\end{equation}
We write  the total potential energy \eqref{ehg}  in terms of the 
inverse stretch  $ H={h}' $  (derivative of the inverse deformation  $ h $):
\begin{equation}
\label{eq1}
E_{\varepsilon } [H]=\int_0^\lambda {[\frac{\varepsilon 
}{2}({H}')^{2}+W^{\ast }(H)]dy} ,
\end{equation}
with  $ \varepsilon ,\lambda >0$. 
We consider ``hard'' loading, namely Dirichlet boundary conditions on the original deformation  $ f(0)=0,\mbox{\, }f(1)=\lambda $. Here  $ \lambda>0 $  is the prescribed deformed length of the bar,  whose reference length equals unity.  As a result,  the inverse deformation is subject to   $ h(0)=0,\mbox{\, }h(\lambda )=1 $.  The inverse stretch is thus subject to two constraints
\begin{equation}
\label{eq2}
 \int_0^\lambda {H(y)dy} =1, \qquad
 H \ge  0\mbox{\, on\, [}0,\lambda ],  \end{equation}
for  $ \lambda \in (0,\infty ) $. The second above  allows for  $ H=0 $,  which, as discussed in the Introduction, corresponds to crack opening. For convenience, we 
change variables as follows: 
\begin{equation}
\label{eq3}
y=\lambda s,\mbox{\, }0 \le  s \le  1; \qquad
 u(s):=\lambda H(\lambda s)-1.\end{equation}
Then \eqref{eq1}, \eqref{eq2} are equivalent to 
\begin{equation}
\label{eq4}
V_{\varepsilon } [\lambda ,u]=\int_0^1 {[\frac{\varepsilon 
}{2}({u}')^{2}+\lambda^{4}W^{\ast }([1+u]/\lambda )]ds} ,
\end{equation}
subject to
\begin{equation}
\int_0^1 {u(s) ds} =0; \qquad 
 u \ge  -1\mbox{\, on\, [}0,1], 
\label{eq5} \end{equation}
respectively. 

We define the Hilbert space
\begin{equation}
\label{eq6}
\mathcal{H}:=\{v\in H^{1}(0,1):\int_0^1 {vds=0\},} 
\end{equation}
with inner product
\begin{equation}
\label{eq7}
\left\langle {u,v} \right\rangle :=\int_0^1 {{u}'{v}'ds} ,
\end{equation}
along with the admissible set
\begin{equation}
\label{eq8}
\mathcal{K}:=\{v\in \mathcal{H}:v \ge  -1\mbox{\, on\, }[0,1]\},
\end{equation}
which is closed and convex. We remark that the inner product \eqref{eq7} 
on  $ \mathcal{H} $  is equivalent to the usual  $ H^{1}(0,1) $  inner product, by 
virtue of a Poincare inequality.

Since the integrand in \eqref{eq4} is quadratic and convex in the argument  $ {u}'$,  
the following is standard:
\begin{prop}\label{prop2.1} $ u\mapsto V_{\varepsilon } [\lambda ,u] $  { attains its minimum on } $ \mathcal{K} $  {for each}   $ \lambda \in (0,\infty )$. 
\end{prop}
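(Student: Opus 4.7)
The plan is to apply the direct method of the calculus of variations. First, observe from \eqref{eq-2} that $W^\ast\ge 0$ on $[0,\infty)$, so the integrand in \eqref{eq4} is nonnegative and hence $V_\varepsilon[\lambda,\cdot]\ge 0$ on $\mathcal{K}$. The infimum $m:=\inf_{v\in\mathcal{K}} V_\varepsilon[\lambda,v]$ is therefore finite, and in fact is bounded above by $V_\varepsilon[\lambda,0]=\lambda^{4}W^\ast(1/\lambda)<\infty$, so a minimizing sequence $\{u_n\}\subset\mathcal{K}$ with $V_\varepsilon[\lambda,u_n]\to m$ exists.

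Next I would establish coercivity and extract a weakly convergent subsequence. The bound on $V_\varepsilon[\lambda,u_n]$ yields a uniform bound on $\|u_n'\|_{L^2(0,1)}$; combined with the zero-mean constraint this gives, via the Poincar\'e inequality already invoked to justify the equivalence of the inner product \eqref{eq7} with the usual $H^1$ one, a uniform bound on $\|u_n\|_{H^1(0,1)}$. Passing to a subsequence (not relabeled), I obtain $u_n\rightharpoonup u$ weakly in $H^1$; since $H^1(0,1)$ embeds compactly into $C([0,1])$, the convergence is in addition uniform on $[0,1]$.

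It then remains to confirm that the limit $u$ lies in $\mathcal{K}$ and that lower semicontinuity holds. The zero-mean constraint is preserved under weak convergence, and the pointwise bound $u\ge -1$ follows by uniform convergence from $u_n\ge -1$, so $u\in\mathcal{K}$. The quadratic term $\tfrac{\varepsilon}{2}\int_0^1(u')^2\,ds$ is weakly lower semicontinuous on $H^1$ by convexity of $t\mapsto t^2$, which is precisely the standard fact the author is alluding to. For the potential term, uniform convergence of $u_n$ to $u$ together with continuity and boundedness of $W^\ast$ on $[0,\infty)$ (by \eqref{eq-2}, where $0\le W^\ast\le\gamma$) gives $\lambda^{4}W^\ast((1+u_n)/\lambda)\to\lambda^{4}W^\ast((1+u)/\lambda)$ uniformly, hence in $L^1(0,1)$. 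Summing, $V_\varepsilon[\lambda,u]\le\liminf_n V_\varepsilon[\lambda,u_n]=m$, and $u$ attains the minimum.

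The only mild subtlety is ensuring that the admissible set $\mathcal{K}$ is stable under the limiting procedure and that the nonconvexity of $W^\ast$ on $(\kappa,\infty)$ does not obstruct lower semicontinuity of the potential term. Both concerns are dispatched by the one-dimensional compact embedding $H^1\hookrightarrow C^0$: it converts weak convergence of the minimizing sequence into uniform convergence, makes the unilateral constraint $u\ge -1$ pass to the limit trivially, and renders the $W^\ast$ term continuous---rather than merely lower semicontinuous---along the sequence. No further structure of $W^\ast$ beyond continuity and boundedness on $[0,\infty)$ is needed for mere existence.
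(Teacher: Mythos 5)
Your proof follows essentially the same approach as the paper's: the direct method, asserting coercivity and weak lower semicontinuity, extracting a subsequence that converges weakly in $\mathcal{H}$ and (by compact embedding) uniformly, and then using the uniform convergence to show the limit lies in $\mathcal{K}$. You fill in the standard coercivity and w.l.s.c. details that the paper leaves to the reader, which is fine.

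One factual error should be corrected, though it is harmless to the argument. You assert that $W^\ast$ is bounded on $[0,\infty)$, citing ``$0\le W^\ast\le\gamma$'' from \eqref{eq-2}. That is not what \eqref{eq-2} says: the condition $\dot W^\ast(0^+)=\gamma$ refers to the \emph{derivative} of $W^\ast$ at $0^+$, not to a bound on $W^\ast$. In fact $W^\ast$ is unbounded: from \eqref{shield}, $W^\ast(H)=HW(1/H)$, and by \eqref{eq-1}, $W(F)\nearrow\infty$ as $F\searrow 0$, so $W^\ast(H)\to\infty$ as $H\to\infty$ (this is also visible in Fig.~\ref{Fig2b} and is consistent with the strict convexity of $W^\ast$ on $(\kappa,\infty)$ together with $W^\ast(1)=0$ and $W^\ast>0$ off $\{0,1\}$). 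The reason your proof survives is that boundedness of $W^\ast$ on all of $[0,\infty)$ was never actually needed: the uniform convergence $u_{n}\to u$ on $[0,1]$ means the arguments $(1+u_n)/\lambda$ lie in a fixed compact subset of $[0,\infty)$, and continuity of $W^\ast$ on that compact set is all you require to pass to the limit in the potential term. Replace the boundedness claim with that observation and the proof is clean.
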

\begin{proof}Clearly  $ V_{\varepsilon } [\lambda ,\cdot ] $  is coercive and 
sequentially weakly lower semi-continuous on $ \mathcal{H}$.  Let  $ \{u_{k} 
\}\subset \mathcal{K} $  be a minimizing sequence. Then for a subsequence, 
 $ u_{k_{j} } \rightharpoonup u $  weakly in  $ \mathcal{H}$,  and by compact 
embedding,  $ u_{k_{j} } \to u $  uniformly on  $ [0,1]$.  Since each  $ u_{k_{j} } 
 \ge  -1 $  on  $ [0,1]$, it follows that  $ u\in \mathcal{K}$.  
\end{proof}
Now let  $ u,v\in \mathcal{K}$,  and consider  $ e(t):=E[(1-t)u+tv],\mbox{\, }t\in 
[0,1] $. Then  $ u $  is a critical point (equilibrium) if  $ {e}'(0) \ge  0$,  
which yields the (Euler-Lagrange) variational inequality
\begin{equation}
\label{eq9}
\int_0^1 {[\varepsilon {u}'(v-u{)}'+\lambda^{3}\Dot{{W}}^{\ast }\left( 
{[1+u]/\lambda } \right)(v-u)]dx}  \ge  0\mbox{\, for\, all\, }v\in 
\mathcal{K},
\end{equation}
where  $ \Dot{{W}}^{\ast }(H):=\frac{d}{dH}W(H)$.  We establish some general 
properties of solutions of \eqref{eq9}, postponing for now the construction of 
other equilibria beyond global energy minimizers. Define the interior 
\begin{equation}
\label{eq10}
\mathcal{K}^{o}:=\{u\in \mathcal{K}:u>-1\mbox{\, on\, [0,1]\},}
\end{equation}
and suppose that  $ u\in \mathcal{K}^{o} $  satisfies \eqref{eq9}. Let  $ \psi ,\varphi 
\in H^{1}(0,1) $  such that $ \int_0^1 {\varphi dx=1,}  $ and set  $ h=\psi -\varphi 
\int_0^1 {\psi dx.}  $  Then  $ h\in \mathcal{H}$,  and  $ v=u\pm th\in 
\mathcal{K}$,  for  $ t>0 $  sufficiently small. Employing these two test 
functions in \eqref{eq9}, we conclude that
\begin{equation}
\label{eq11}
\int_0^1 {[\varepsilon {u}'{\psi }'+\{\lambda^{3}\Dot{{W}}^{\ast }\left( 
{[1+u]/\lambda } \right)-\mu \}\psi ]ds} =0\mbox{\, for\, all\, }\psi \in 
H^{1}(0,1),
\end{equation}
where  $ \mu  $  is a constant (Lagrange) multiplier. This leads to:
\begin{prop}\label{prop2.2}\textit{An interior point} $ u\in \mathcal{K}^{o} $ \textit{is a solution of }\eqref{eq9} for  $ \lambda \in 
(0,\infty )$, if and only if $ u\in C^{2}[0,1] $ \textit{ satisfies}
\begin{equation}
\label{eq12}
\begin{array}{l}
 -\varepsilon {u}''+\lambda^{3}\Dot{{W}}^{\ast }\left( {[1+u]/\lambda } 
\right)=\lambda^{3}\int_0^1 {\Dot{{W}}^{\ast }\left( {[1+u(\tau )]/\lambda 
} \right)} d\tau ,\mbox{\, }0<x<1; \\  
{u}'(0)={u}'(1)=0; \qquad\int_0^1 {uds=0.} \\ 
 \end{array}
\end{equation}
\end{prop}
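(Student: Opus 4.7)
The plan is to treat the two implications separately, using the identity \eqref{eq11} (already derived in the paragraph preceding the proposition) as the main tool for the forward direction.

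For the forward direction, I start from \eqref{eq11}, which holds for all $\psi\in H^1(0,1)$ once we know $u\in\mathcal{K}^o$. First I would test against $\psi\in C^\infty_c(0,1)$; then the boundary terms disappear and \eqref{eq11} says that $-\varepsilon u''+\lambda^3\dot W^\ast((1+u)/\lambda)-\mu=0$ in the distributional sense on $(0,1)$. Since $u\in H^1(0,1)\subset C[0,1]$ in one dimension and $u>-1$ on $[0,1]$, the argument $(1+u)/\lambda$ stays strictly positive, so by \eqref{eq-2} the composition $\dot W^\ast((1+u)/\lambda)$ is continuous on $[0,1]$. Hence $u''\in C[0,1]$, giving $u\in C^2[0,1]$ (a standard bootstrap). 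Next, to identify the multiplier, I would plug $\psi\equiv 1$ into \eqref{eq11}: the first term vanishes, and what remains yields $\mu=\lambda^3\int_0^1\dot W^\ast((1+u(\tau))/\lambda)\,d\tau$, which is the right-hand side of the ODE in \eqref{eq12}. Finally, for the Neumann conditions I would integrate \eqref{eq11} by parts using the now-established $C^2$ regularity:
\begin{equation*}
\bigl[\varepsilon u'\psi\bigr]_0^1+\int_0^1\bigl[-\varepsilon u''+\lambda^3\dot W^\ast((1+u)/\lambda)-\mu\bigr]\psi\,ds=0.
\end{equation*}
The integrand is zero by the ODE, so $\varepsilon u'(1)\psi(1)-\varepsilon u'(0)\psi(0)=0$ for every $\psi\in H^1(0,1)$; choosing $\psi$ with $\psi(0)=1,\psi(1)=0$ and vice versa gives $u'(0)=u'(1)=0$. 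The constraint $\int_0^1 u\,ds=0$ is built into $\mathcal{H}$.

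For the converse, suppose $u\in C^2[0,1]\cap\mathcal{K}^o$ satisfies \eqref{eq12}. Take an arbitrary $v\in\mathcal{K}$ and form $v-u$. Since both $v$ and $u$ have zero mean, $\int_0^1(v-u)\,ds=0$, so multiplying the ODE in \eqref{eq12} by $v-u$ and integrating yields
\begin{equation*}
\int_0^1\bigl[-\varepsilon u''(v-u)+\lambda^3\dot W^\ast((1+u)/\lambda)(v-u)\bigr]\,ds=0.
\end{equation*}
Integration by parts on the first term, together with $u'(0)=u'(1)=0$, converts $-\varepsilon\int_0^1 u''(v-u)\,ds$ into $\varepsilon\int_0^1 u'(v-u)'\,ds$ with no boundary contribution. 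Thus \eqref{eq9} holds with equality, in particular $\ge 0$, so $u$ solves the variational inequality.

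The main obstacle, given that \eqref{eq11} is essentially handed to us, is a matter of care rather than difficulty: one must verify that $u\in\mathcal{K}^o$ is enough to keep $\dot W^\ast((1+u)/\lambda)$ regular (this uses the openness of the constraint and \eqref{eq-2}), and one must justify that the admissible variations $v=u\pm th$ used earlier actually stay in $\mathcal{K}$ for small $t>0$—which is exactly where the interior hypothesis $u>-1$ is essential and would fail on $\partial\mathcal{K}$. Beyond these checks, the proof is a standard regularity-plus-boundary-term argument together with a one-line computation for the converse.
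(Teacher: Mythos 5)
Your proof is correct and follows essentially the same route as the paper's: use \eqref{eq11} with compactly supported test functions to obtain the ODE distributionally, bootstrap $u\in C^2[0,1]$ from the continuity of $\dot W^\ast((1+u)/\lambda)$ on the compact interval (valid because $u\in\mathcal{K}^o$ keeps the argument in a compact subset of $(0,\infty)$), integrate by parts to recover the Neumann conditions, and identify the multiplier; the converse is the reversal the paper declares obvious, which you carry out explicitly using $\int_0^1(v-u)\,ds=0$. The only cosmetic difference is that you extract $\mu$ by testing \eqref{eq11} with $\psi\equiv 1$, while the paper integrates \eqref{eq13} over $(0,1)$ using the boundary conditions — two equivalent ways of saying the same thing.
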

\begin{proof} If  $ u\in \mathcal{K}^{o} $  satisfies \eqref{eq9} then \eqref{eq11} holds, 
and by embedding,  $ u\in C[0,1]$.  Thus, \eqref{eq11} implies directly that the 
second distributional derivative of  $ u $ can be identified with a continuous 
function. An integration by parts in \eqref{eq11} then delivers
\begin{equation}
\label{eq13}
\varepsilon {u}''=\lambda^{3}\Dot{{W}}^{\ast }([1+u]/\lambda )-\mu 
,\mbox{\, }0<x<1,
\end{equation}
as well as the boundary conditions \eqref{eq12}$_{2} $. The nonlocal form 
\eqref{eq12}$_{1} $  follows by integrating \eqref{eq13} over  $ (0,1) $  while making use of 
the boundary conditions. The integral constraint \eqref{eq12}$_{3} $  is automatic, 
cf. \eqref{eq6}. Finally, \eqref{eq13} along with  $ u\in C[0,1] $  imply that  $ u $ is 
 $ C^{2} $ on the closed interval. The converse statement is obvious; starting 
with \eqref{eq12}, the above steps are reversible.  \end{proof}

Next, for given solution pair  $ (\lambda ,u)\in (0,\infty)\times 
\mathcal{K} $  of \eqref{eq9}, we define the closed \textit{broken set}
\begin{equation}
\label{eq14}
\mathcal{B}_{u} :=\{s\in [0,1]:u(s)=-1\},
\end{equation}
and the open \textit{glued set}
\begin{equation}
\label{eq15}
\mathcal{G}_{u} :=\{s\in [0,1]:u(s)>-1\}.
\end{equation}
Clearly  $ \mathcal{G}_{u} \cup \mathcal{B}_{u} =[0,1]$,  and in view of \eqref{eq5}, 
we note that $ \mathcal{G}_{u} \ne \emptyset$. 

\begin{thm}\label{thm2.3}Any solution $ u\in \mathcal{K},\lambda \in (0,\infty )$, of \eqref{eq9}  is continuously differentiable on  $ [0,1]$.  
Moreover, $ u\in C^{2}(\mathcal{G}_{u} )$.  
\end{thm}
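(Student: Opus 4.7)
The plan is to derive the Euler--Lagrange equation on the open glued set $\mathcal{G}_u$ by a localized version of the argument in Proposition~\ref{prop2.2}, giving $C^2(\mathcal{G}_u)$, and then to extract continuity of $u'$ across the free boundary $\partial \mathcal{G}_u$ by combining the obstacle condition $u\ge -1=u(s_0)$ with a tent-function test in a reduced form of the variational inequality. The main obstacle will be to eliminate the integral constraint in favor of a global Lagrange multiplier so that the reduced VI can be tested with a nonnegative bump localized at a free-boundary point, and then to pin down the one-sided derivatives $u'(s_0^{\pm})$ there.

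First, I fix $\eta \in C_c^\infty((0,1))$ supported in $\mathcal{G}_u$ with $\int_0^1 \eta\,ds > 0$; such an $\eta$ exists because $\mathcal{G}_u$ is open and, since $\int_0^1 u\,ds = 0$ with $u\ge -1$, has positive measure. For any $\psi \in H^1(0,1)$, the variation $v_t = u + t\bigl[\psi - (\int\psi\,ds/\int\eta\,ds)\eta\bigr]$ satisfies $\int_0^1 v_t\,ds = 0$ and $v_t\ge -1$ for small $|t|$, hence lies in $\mathcal{K}$. Testing \eqref{eq9} with $v_{\pm t}$ and passing to $t\to 0$ yields a single constant $\mu$ and the Lagrange-multiplier equation
\begin{equation*}
-\varepsilon u'' + \lambda^3 \dot{W}^{\ast}([1+u]/\lambda) = \mu \quad\text{on }\mathcal{G}_u,
\end{equation*}
which, by the standard bootstrap, gives $u\in C^2(\mathcal{G}_u)$. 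Because $\dot{W}^{\ast}(0^+)=\gamma$ is finite by \eqref{eq-2}, the right-hand side extends continuously as $s$ approaches any boundary point of $\mathcal{G}_u$ where $u=-1$; consequently $u\in C^2(\overline{I})$ on each maximal component $I$ of $\mathcal{G}_u$, with well-defined one-sided boundary values of $u'$.

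To handle the free-boundary regularity, I restrict the variations above to $\phi\in H^1(0,1)$ with $\phi\ge 0$ and $t>0$ small, so that $v_t\in\mathcal{K}$; absorbing the $\eta$-contribution into $\mu$ produces the reduced VI
\begin{equation*}
\int_0^1\bigl[\varepsilon u'\phi' + g(s)\phi\bigr]\,ds \ge 0 \quad\text{for all }\phi\in H^1(0,1),\ \phi\ge 0,
\end{equation*}
where $g(s):=\lambda^3 \dot{W}^{\ast}([1+u(s)]/\lambda)-\mu$. Plugging in the tent function $\phi_\delta(s)=\max(0,1-|s-s_0|/\delta)$ at any $s_0\in\partial\mathcal{G}_u\cap(0,1)$ and computing directly,
\begin{equation*}
\int_0^1 u'\phi_\delta'\,ds = -\delta^{-1}\bigl[u(s_0+\delta)-2u(s_0)+u(s_0-\delta)\bigr],
\end{equation*}
while $\int g\phi_\delta\,ds=O(\delta)$. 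Expanding by Taylor on each side of $s_0$ (using either the $C^2$-extension above on a $\mathcal{G}_u$-side, or $u\equiv -1$ on an $\mathrm{int}(\mathcal{B}_u)$-side) together with $u(s_0)=-1$, the reduced VI passes in the limit $\delta\to 0$ to $\varepsilon\bigl[u'(s_0^-)-u'(s_0^+)\bigr]\ge 0$.

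The obstacle condition supplies the complementary inequality: $u(s)-u(s_0)\ge 0$ for $s$ near $s_0$, divided by $s-s_0$ and sent to one-sided limits, yields $u'(s_0^+)\ge 0\ge u'(s_0^-)$. Combined with the previous step this forces $u'(s_0^+)=u'(s_0^-)=0$, so $u'$ is continuous at every $s_0\in\partial\mathcal{G}_u\cap(0,1)$. Continuity of $u'$ on $\mathcal{G}_u$ is already known, and on $\mathrm{int}(\mathcal{B}_u)$ one has $u\equiv -1$, hence $u'\equiv 0$. An analogous half-tent argument at $s=0,1$ (or, if an endpoint lies in $\mathcal{G}_u$, the Neumann condition already present in Proposition~\ref{prop2.2}) delivers $u'(0)=u'(1)=0$. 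Assembling these, $u\in C^1[0,1]$.
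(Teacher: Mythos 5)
Your tent-function argument is a genuinely different route from the paper's. The paper, after obtaining \eqref{eq16}, invokes the Riesz representation theorem to characterize the left side by a non-negative Radon measure $\mathfrak{m}$ supported on $\mathcal{B}_u$ and then integrates once to reach the explicit formula \eqref{eq20}, $\varepsilon u'(s)=\lambda^3\int_0^s\dot W^\ast(\cdot)\,d\xi-\mu s-\varphi(s)+C$ with $\varphi=\mathfrak{m}([0,\cdot))$ non-decreasing. That formula makes $u'$ a BV function, hence with one-sided limits everywhere, and the inequality $u'(s^+)\le u'(s^-)$ drops out of the monotonicity of $\varphi$; the obstacle-side sign argument (your last paragraph) then closes it exactly as you do. Your derivation of the multiplier and the ODE on $\mathcal{G}_u$ is essentially the same as the paper's.

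The gap is in the Taylor-expansion step. You obtain $\varepsilon[u'(s_0^-)-u'(s_0^+)]\ge 0$ by expanding $u$ in Taylor series on each side of $s_0$, justified by assuming that each side is either a single component of $\mathcal{G}_u$ (covered by your $C^2(\overline{I})$ extension) or an interval of $\operatorname{int}(\mathcal{B}_u)$. That dichotomy is not exhaustive: \emph{a priori} a point of $\partial\mathcal{G}_u$ could be an accumulation point of infinitely many components of $\mathcal{G}_u$ from one side, and then neither alternative applies, so the one-sided limits $u'(s_0^\pm)$ you expand around are not yet known to exist. The paper's measure formula is designed precisely to avoid this, since BV structure supplies one-sided limits of $u'$ with no assumption on the shape of $\mathcal{B}_u$. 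Your tent identity can in fact be pushed through without Taylor at all: the reduced VI gives $\limsup_{\delta\to 0}\delta^{-1}\bigl[u(s_0+\delta)+u(s_0-\delta)-2u(s_0)\bigr]\le 0$, while the obstacle gives $u(s_0\pm\delta)-u(s_0)\ge 0$ separately; together these force both one-sided difference quotients of $u$ to tend to $0$, establishing existence and vanishing of $u'(s_0^\pm)$ in one step. Even so, converting ``the one-sided derivatives of $u$ vanish at $s_0$'' into ``$\lim_{s\to s_0}u'(s)=0$'' needs one more observation (e.g.\ the uniform $L^\infty$ bound on $u''$ over $\mathcal{G}_u$ furnished by \eqref{eq18}), which the paper again gets for free from the regulated/BV structure of \eqref{eq20}.
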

\begin{proof}We choose  $ \psi ,\varphi \in H^{1}(0,1) $  as before leading 
to \eqref{eq11}, except we also require here that  $ \psi  \ge  0 $  and 
 $ h \ge  0 $  on  $ \mathcal{B}_{u}$.  Setting  $ v=u+h\in \mathcal{H} $  in 
\eqref{eq9} then leads to
\begin{equation}
\label{eq16}
\int_0^1 {[\varepsilon {u}'{\psi }'+\{\lambda^{3}\Dot{{W}}^{\ast }\left( 
{[1+u]/\lambda } \right)-\mu \}\psi ]ds \ge  0} \mbox{\, for\, all\, 
}\psi \in H^{1}(0,1)\mbox{\, with\, }\psi  \ge  0\mbox{\, on\, 
}\mathcal{B}_{u} ,
\end{equation}
where  $ \mu  $  is a constant multiplier. Specializing \eqref{eq16} to test functions 
 $ \psi  $ vanishing on  $ \mathcal{B}_{u} $,  we obtain
\begin{equation}
\label{eq17}
\int_{\mathcal{G}_{u} } {[\varepsilon {u}'{\psi }'+\{\lambda 
^{3}\Dot{{W}}^{\ast }\left( {[1+u]/\lambda } \right)-\mu \}\psi ]ds=0} 
\mbox{\, \, for\, all\, }\psi \in H^{1}(0,1)\mbox{\, with\, }\psi =0\mbox{\, 
on\, }\mathcal{B}_{u}.
\end{equation}
The same arguments used in the proof of Proposition \ref{prop2.2} now show that  $ u\in 
C^{2}(\mathcal{G}_{u} ) $  satisfies the differential equation
\begin{equation}
\label{eq18}
\varepsilon {u}''=\lambda^{3}\Dot{{W}}^{\ast }([1+u]/\lambda )-\mu \mbox{\, 
in\, }\mathcal{G}_{u} \mbox{.}
\end{equation}
A version of the Riesz representation theorem implies that the left side 
\eqref{eq16} is characterized by a non-negative Radon measure $ \mathfrak{m}$, which 
in view of \eqref{eq18}, has support in $ \mathcal{B}_{u}$, viz.,
\begin{equation}
\label{eq19}
\int_0^1 {[\varepsilon {u}'{\psi }'+\{\lambda^{3}\Dot{{W}}^{\ast }\left( 
{[1+u]/\lambda } \right)-\mu \}\psi ]ds=\int_{\mathcal{B}_{u} } {\psi 
d\mathfrak{m}(s)} } \mbox{\, for\, all\, }\psi \in H^{1}(0,1).
\end{equation}
Arguing as in  \cite{kinder}, we set  $ \varphi (s):=\mathfrak{m}([0,s))$, which is 
non-decreasing. Then \eqref{eq19} implies
\begin{equation}
\label{eq20}
\varepsilon {u}'(s)=\lambda^{3}\int_0^s {\Dot{{W}}^{\ast }([1+u(\xi 
)]/\lambda )d\xi -\mu s-\varphi (s)+C,} 
\end{equation}
where  $ C $  is a constant. Since  $ \varphi  $  is non-decreasing, \eqref{eq20} shows 
that  $ {u}'(s^{+}) \le  {u}'(s^{-})\mbox{\, on\, }[0,1]$.  Now 
consider $ s\in \mathcal{B}_{u} $, i.e.,  $ u(s)=-1$, and thus  $ u(t)-u(s) \ge  
-1-(-1)=0$.  Examination of the difference quotient  $ [u(t)-u(s)]/(t-s)(t\ne 
s) $  in the limit from the right and from the left as  $ t\to s $  reveals that 
 $ {u}'(s^{+}) \ge  0 \ge  {u}'(s^{-})$. \end{proof} 

\begin{rem}\label{rem2.4}An energy minimizer, say,  $ u_{\lambda } ,\lambda \in 
(0,\infty )$, given by Proposition \ref{prop2.1}, always satisfies \eqref{eq9}. As such, 
 $ u_{\lambda } \in C^{1}[0,1]$, by virtue of Theorem \ref{thm2.3}.
\end{rem}
Given that any solution  $ u $  of \eqref{eq9} is  $ C^{1} $  with  $ u\equiv -1 $  on 
 $ \partial \mathcal{G}_{u} $, integration of \eqref{eq18} as in the proof of 
Proposition \ref{prop2.2} yields 
\begin{cor}\label{cor2.5}{Given a solution pair } $ (\lambda ,u) $ {as in Theorem} \ref{thm2.3}, 
{then for any } $ (a,b)\subset \mathcal{G}_{u} 
 $ {with } $ a,b\in \partial \mathcal{G}_{u} $, 
\begin{equation}
\label{eq21}
\begin{array}{l}
 -\varepsilon {u}''+\lambda^{3}\Dot{{W}}^{\ast }\left( {[1+u]/\lambda } 
\right)=\frac{\lambda^{3}}{b-a}\int_a^b {\Dot{{W}}^{\ast }\left( {[1+u(\tau 
)]/\lambda } \right)} d\tau \mbox{\, in\, }\mathcal{G}_{u} \mbox{,} \\ 
 \mbox{\, \, \, \, \, \, \, \, \, \, \, \, \, \, \, 
}{u}'(a)={u}'(b)=0,\mbox{\, \, } \\ 
 \end{array}
\end{equation}
{and } $ u $ {is } $ C^{2} $ {on } $ [a,b]$. { If } $ [0,b)\subset \mathcal{G}_{u}  $ {with}  $ b\in \partial 
\mathcal{G}_{u} $, {then }\eqref{eq21} {holds with}  $ a=0$. { Likewise if } $ (a,1]\subset \mathcal{G}_{u}  $ {with} $ a\in 
\partial \mathcal{G}_{u} $, {then }\eqref{eq21} {holds with } $ b=1.  $ 
\end{cor}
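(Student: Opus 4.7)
My plan is to build on the machinery already in place: Theorem~\ref{thm2.3} furnishes $u\in C^1[0,1]$ with $u\in C^2(\mathcal{G}_u)$ satisfying the pointwise ODE \eqref{eq18} throughout $\mathcal{G}_u$ for a single constant multiplier $\mu$. The corollary then reduces to three tasks: (i) establishing $u'(a)=u'(b)=0$ at the endpoints of a maximal connected component $(a,b)$ of $\mathcal{G}_u$, (ii) integrating \eqref{eq18} over $(a,b)$ to eliminate $\mu$ in favor of the nonlocal expression in \eqref{eq21}, and (iii) promoting the $C^2$-regularity up to the closed interval $[a,b]$.

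For (i), when $a,b\in(0,1)$ with $u(a)=u(b)=-1$, the lower bound $u\geq -1$ together with $u\in C^1[0,1]$ forces $a$ and $b$ to be interior minima of $u$, whence $u'(a)=u'(b)=0$. In the boundary case $[0,b)\subset\mathcal{G}_u$ (the case $(a,1]\subset\mathcal{G}_u$ being symmetric), we have $u(0)>-1$, so the unilateral constraint is inactive near $s=0$. Choose a cutoff $c\in(0,b)$; for any $\psi\in H^1(0,1)$ supported in $[0,c]$ and any $\varphi\in H^1(0,1)$ supported in a subinterval of $\mathcal{G}_u$ disjoint from both $[0,c]$ and $\mathcal{B}_u$ with $\int_0^1\varphi\,ds=1$, the corrected test function $h=\psi-\varphi\int_0^1\psi\,ds$ lies in $\mathcal{H}$ and vanishes on $\mathcal{B}_u$; hence both $\pm h$ deliver admissible variations $u\pm th\in\mathcal{K}$ for $t>0$ small. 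Substituting both signs in \eqref{eq9} turns the inequality into an equality, and an integration by parts on $(0,c)$ yields the natural boundary condition $u'(0)=0$.

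For (ii), integrating \eqref{eq18} over $(a,b)$ and invoking $u'(a)=u'(b)=0$ gives
\[
0=\varepsilon[u'(b)-u'(a)]=\lambda^3\int_a^b\dot W^{\ast}\bigl([1+u(\tau)]/\lambda\bigr)\,d\tau-\mu(b-a),
\]
which fixes $\mu$ and converts \eqref{eq18} into the nonlocal form \eqref{eq21}. For (iii), the identity $\varepsilon u''(y)=\lambda^3\dot W^{\ast}([1+u(y)]/\lambda)-\mu$ together with continuity of $u$ on $[a,b]$ and the key fact $\dot W^{\ast}(0^+)=\gamma<\infty$ from \eqref{eq-2} shows that $u''$ admits continuous limits at $a$ and $b$, yielding $u\in C^2[a,b]$. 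I expect the main obstacle to be step (i) at endpoints $a=0$ or $b=1$: one must carefully route the Lagrange-multiplier corrector $\varphi$ through $\mathcal{G}_u$ away from both $[0,c]$ and $\mathcal{B}_u$ so that the variational inequality \eqref{eq9} genuinely collapses to an equality, producing the two-sided natural condition $u'(0)=0$ rather than a one-sided inequality.
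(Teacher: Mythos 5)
Your overall plan mirrors the paper's one-line argument (invoke $u\in C^1$ and $u\equiv-1$ on $\partial\mathcal{G}_u$ from Theorem~\ref{thm2.3}, integrate \eqref{eq18} to eliminate $\mu$, and get natural boundary conditions by integration by parts as in Proposition~\ref{prop2.2}). Steps (ii) and (iii) are correct, and your interior-minimum argument in (i) is right for $a,b\in(0,1)$.

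However, your case split in step~(i) leaves a gap. The first clause of the corollary concerns any $(a,b)\subset\mathcal{G}_u$ with $a,b\in\partial\mathcal{G}_u$; since $\mathcal{G}_u$ is open, $\partial\mathcal{G}_u\subset\mathcal{B}_u$, but nothing excludes $a=0$ or $b=1$ from lying in $\mathcal{B}_u$. The case $a=0$ with $u(0)=-1$ and $(0,b)\subset\mathcal{G}_u$ genuinely occurs (it is the antisymmetric partner of the broken solution \eqref{eq44}--\eqref{eq45}, $u(s)\mapsto u(1-s)$). For it, your ``interior minimum'' argument yields only the one-sided inequality $u'(0^+)\ge0$, and your ``boundary case'' argument presupposes $u(0)>-1$ and hence does not apply, so $u'(0)=0$ is unproven there. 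The clean fix is not to split cases on whether the constraint is active at the physical endpoints: integrate \eqref{eq19} by parts for arbitrary $\psi\in H^1(0,1)$, using the identity $d(\varepsilon u')=\bigl[\lambda^3\dot W^\ast([1+u]/\lambda)-\mu\bigr]ds-d\mathfrak{m}$ implicit in \eqref{eq20}; the bulk and measure terms cancel against those already in \eqref{eq19}, leaving $[\varepsilon u'\psi]_0^1=0$ for all $\psi$, hence $u'(0)=u'(1)=0$ unconditionally. (Alternatively, retain your one-sided bound $u'(0^+)\ge0$ and couple it with the one-sided variational inequality \eqref{eq16} tested with $\psi$ supported near $0$, $\psi(0)>0$, $\psi\ge0$ on $\mathcal{B}_u$, which, after integration by parts over $(0,c)\subset\mathcal{G}_u$ and use of \eqref{eq18}, gives $-\varepsilon u'(0)\psi(0)\ge0$, hence $u'(0)\le0$.) With that repaired, the remainder of your argument goes through.
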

Next, we obtain bounds on nontrivial solutions  $ u\ne 0,\lambda \in (0,1)$.  
Define  $ M>1  $ via  $ \Dot{{W}}^{\ast }(M)=\gamma :=\Dot{{W}}^{\ast }(0)$, cf. 
Fig.~\ref{Fig5a}.

\begin{thm}\label{thm2.6}{Any nontrivial solution pair of }\eqref{eq9},  $(\lambda ,u)\in (0,\infty )\times 
\mathcal{K},\mbox{\, }u\ne 0$, { is characterized by}
\begin{equation}
\label{eq22}
\begin{array}{l}
 \lambda >1/M, \\ 
 \left\| u \right\|_{\infty }  \le  C_{\lambda } ,\mbox{\, }\left\| u 
\right\|_{C^{1}} =\left\| u \right\|_{\infty } +\left\| {{u}'} 
\right\|_{\infty }  \le  C_{\lambda ,\varepsilon } , \\ 
 \end{array}
\end{equation}
{where } $ \left\| \cdot \right\|_{\infty }  $ { denotes the maximum norm over } $ [0,1]$, {and } $ C_{\lambda } ,C_{\lambda 
,\varepsilon }  $ { are constants having dependence upon the indicated subscripted quantities, but independent of } $ u.  $ 
\end{thm}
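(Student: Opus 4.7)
The plan is to analyze $u$ via the ODE of Corollary \ref{cor2.5} together with its first integral
\[
\tfrac{\varepsilon}{2}(u'(s))^{2} = \lambda^{4} W^{\ast}((1+u(s))/\lambda) - \mu u(s) + C,
\]
valid on each connected component $(a,b)\subset\mathcal{G}_u$. The Lagrange multiplier $\mu$ is a single constant associated with $u$ (cf.\ the proof of Theorem \ref{thm2.3}), while the integration constant $C$ is component-dependent and fixed by $u'(a)=u'(b)=0$. I will invoke maximum-principle inequalities at extrema of $u$ together with the shape of $\dot{W}^{\ast}$ recorded in \eqref{eq-2}: $\dot{W}^{\ast}(0^{+})=\dot{W}^{\ast}(M)=\gamma$, $\dot{W}^{\ast}$ is strictly decreasing on $[0,\kappa)$ and strictly increasing on $(\kappa,\infty)$, so $\dot{W}^{\ast}\le\gamma$ on $[0,M]$ with $\dot{W}^{\ast}(H)>\gamma$ for $H>M$.

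To prove $\lambda>1/M$, I argue by contradiction. If $\lambda\le 1/M$, then since $u\not\equiv 0$ and $\int_{0}^{1}u\,ds=0$, the global maximum $u_{*}=u(s_{*})$ is strictly positive, so $H_{*}:=(1+u_{*})/\lambda>1/\lambda\ge M$ and hence $\dot{W}^{\ast}(H_{*})>\gamma$; together with $u''(s_{*})\le 0$ the ODE gives $\dot{W}^{\ast}(H_{*})\le\mu/\lambda^{3}$. I then inspect the global minimum. If it lies in $\mathcal{G}_u$, the analogous inequality $\dot{W}^{\ast}(H_{**})\ge\mu/\lambda^{3}$ combined with $H_{**}<H_{*}$ and the shape of $\dot{W}^{\ast}$ is immediately contradictory. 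If it lies in $\mathcal{B}_u$, then an adjacent component of $\mathcal{G}_u$ has an endpoint where $u=-1$ and $u'=0$, forcing $C=-\mu$; the first integral there becomes $\varepsilon(u')^{2}/2=\lambda^{4}W^{\ast}(H)-\mu\lambda H$, and letting $H\to 0^{+}$ while using $W^{\ast}(H)/H\to\gamma$ yields $\mu\le\gamma\lambda^{3}$, so $\dot{W}^{\ast}(H_{*})\le\mu/\lambda^{3}\le\gamma$, again contradicting $\dot{W}^{\ast}(H_{*})>\gamma$.

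For $\|u\|_{\infty}\le C_{\lambda}$, the lower bound is the unilateral constraint. For the upper bound, I analyze the component $(a,b)\subset\mathcal{G}_u$ containing $s_{*}$. If $\mathcal{G}_u=[0,1]$ (no fracture), the global minimum also lies in $\mathcal{G}_u$, and the paired extremum inequalities $\dot{W}^{\ast}(H_{*})\le\mu/\lambda^{3}\le\dot{W}^{\ast}(H_{**})$ together with $H_{*}>H_{**}$ force $H_{*}\le M$ by the shape of $\dot{W}^{\ast}$, so $u_{*}\le\lambda M-1$. Otherwise $(a,b)$ has at least one endpoint in $\partial\mathcal{B}_u$; then $C=-\mu$, and the first integral factors as $\varepsilon(u')^{2}/2=\lambda^{4}H[W(1/H)-\mu/\lambda^{3}]$. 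Evaluating at $s_{*}$ yields $\mu/\lambda^{3}=W(1/H_{*})$, and nonnegativity of $(u')^{2}$ along the trajectory from $H=H_{*}$ down to $H=0$ requires $W(F)\ge W(1/H_{*})$ for every $F=1/H$ visited. Since $W$ is strictly decreasing on $(0,1)$ and strictly increasing on $(1,\infty)$ with minimum $W(1)=0$, and the trajectory attains $F=\infty$, this forces $1/H_{*}\ge 1$, i.e., $u_{*}\le\lambda-1$. Combining, $\|u\|_{\infty}\le\max(1,\lambda M-1)=:C_{\lambda}$. Finally, for the $C^{1}$ bound, once $\|u\|_{\infty}$ is controlled, $H=(1+u)/\lambda$ ranges in a compact subset of $[0,\infty)$ depending only on $\lambda$, so $W^{\ast}(H)$ and $\dot{W}^{\ast}(H)$ are bounded; averaging the ODE over any component of $\mathcal{G}_u$ (and using $u'=0$ at its endpoints) gives $|\mu|$ bounded in terms of $\lambda$, and $|C|$ follows from the first integral evaluated at an endpoint. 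The first integral then delivers $(u')^{2}\le(2/\varepsilon)\cdot\mathrm{const}(\lambda)$, yielding the $C^{1}$ estimate. The main obstacle I anticipate is the phase-plane argument in the fractured case: it requires careful use of the bowl-shape of $W$ about $F=1$ to rule out non-monotonic trajectories and to extract $1/H_{*}\ge 1$ from the trajectory being forced to reach $F=\infty$.
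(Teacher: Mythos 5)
Your argument is correct, but it is a more circuitous route than the paper's. The paper treats the whole theorem with one uniform two-point comparison: on the component of $\mathcal{G}_{u}$ containing the global maximum, evaluate \eqref{eq21} at the maximizer $s_{M}$ and minimizer $s_{m}$ (one-sided second derivatives being available at $\partial\mathcal{G}_{u}$ by Corollary \ref{cor2.5}), subtract, and use $u''(s_{M})\le 0\le u''(s_{m})$ to obtain $\dot{W}^{\ast}([1+u(s_{M})]/\lambda)\le\dot{W}^{\ast}([1+u(s_{m})]/\lambda)$. Since $\dot{W}^{\ast}\le\gamma$ on $[0,M]$ while $\dot{W}^{\ast}>\gamma$ beyond $M$, and since $0\le[1+u(s_{m})]/\lambda<1/\lambda<[1+u(s_{M})]/\lambda$, this forces $[1+u(s_{M})]/\lambda\le M$, which yields $\lambda>1/M$ and $\|u\|_{\infty}\le C_{\lambda}$ simultaneously; a bootstrap through \eqref{eq21}$_{1}$ then gives the $C^{1}$ bound. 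You reach the same conclusions but split into cases according to whether the relevant extremum lies in $\mathcal{G}_{u}$ or in $\mathcal{B}_{u}$, handling the latter via the first integral with $C=-\mu$ rather than by the two-point comparison. That phase-plane argument is sound (nonnegativity of $(u')^{2}$ forces $W(F)\ge W(1/H_{\ast})$ along the trajectory, and the bowl shape of $W$ about $F=1$ then forces $H_{\ast}\le 1$), and it in fact yields a slightly sharper constant in the fractured case; but the case split is avoidable, since the two-point comparison applies verbatim with $s_{m}\in\partial\mathcal{G}_{u}$, $u(s_{m})=-1$, and the one-sided $u''(s_{m})\ge 0$ guaranteed by Corollary \ref{cor2.5}, a point you seem not to have exploited. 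What the phase-plane detour buys you is the improved constant and an early look at the structure the paper develops in Section 4; what it costs is the extra bookkeeping ($W^{\ast}(H)/H\to\gamma$, the location of the minimum of $W$) and the case analysis.
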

\bpr  Given that  $ u \ge  -1 $  is automatic, with  $ u\equiv -1 $  
on  $ \mathcal{B}_{u} $,  we only require upper bounds on nontrivial solutions 
 $ u\in \mathcal{G}_{u} $.  In view of Corollary \ref{cor2.5}, we consider any 
 $ (a,b)\subset \mathcal{G}_{u}  $ with  $ a,b\in \partial \mathcal{G}_{u} $, or 
 $ [0,b)\subset \mathcal{G}_{u}  $ with  $ b\in \partial \mathcal{G}_{u} $, or 
 $ (a,1]\subset \mathcal{G}_{u}  $ with  $ a\in \partial \mathcal{G}_{u} $. Since 
 $ u $ attains both its maximum and its minimum somewhere on the appropriate 
closed interval, there are points  $ s_{m} ,s_{M}  $  such that
\begin{equation}
\label{eq23}
\begin{array}{l}
 \max u(s)=u(s_{M} ),\mbox{\, }\min u(s)=u(s_{m} ), \\ 
 \mbox{\, \, \, \, \, \, \, with\, \, }u(s_{m} )<0<u(s_{M} ), \\ 
 \mbox{\, \, \, \, \, }{u}''(s_{m} ) \ge  0\mbox{\, and\, }{u}''(s_{M} 
) \le  0, \\ 
 \end{array}
\end{equation}
where \eqref{eq23}$_{2} $  is a consequence of \eqref{eq5}$_{1} $. If  $ s_{m}  $  and/or 
 $ s_{M}  $ are boundary points, then \eqref{eq23}$_{3\, } $ are one-sided derivatives, 
as justified in Corollary \ref{cor2.5}. We now evaluate \eqref{eq21} at  $ s_{m}  $ and at 
 $ s_{M} $, respectively, subtract the resulting equations and employ 
\eqref{eq23}$_{3} $  to deduce
\begin{equation}
\label{eq24}
\Dot{{W}}^{\ast }([1+u(s_{M} )]/\lambda ) \le  \Dot{{W}}^{\ast 
}([1+u(s_{m} )]/\lambda ).
\end{equation}
From the graph of  $ \Dot{{W}}^{\ast }(\cdot )$, cf. Fig.~\ref{Fig5}, it follows that 
both \eqref{eq23}$_{2\, } $ and \eqref{eq24} can be fulfilled only if
\begin{equation}
\label{eq25}
0<1/\lambda  \le  M\mbox{\, and\, }0 \le  [1+u(s_{m} )]/\lambda 
<[1+u(s_{M} )]/\lambda  \le  M,
\end{equation}
which is equivalent to
\begin{equation}
\label{eq26}
\lambda  \ge  1/M\mbox{\, and}-1 \le  u(s_{m} )<u(s_{M} ) \le  
-1+\lambda M
\end{equation}
This yields \eqref{eq22}$_{1,2} $, with  $ C_{\lambda } :=\max \{1,-1+\lambda M\}$.  
Combining \eqref{eq26} with \eqref{eq21}$_{1\, } $ (bootstrap) yields 
\eqref{eq22}$_{3} $.\epr
\begin{cor}\label{cor2.7} $ 0 \le  H(y)<M $  for all  $ y\in [0,\lambda ]$. \end{cor}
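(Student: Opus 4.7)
The plan is to translate Theorem \ref{thm2.6} back into the original inverse-stretch variable $H$ using the change of variables \eqref{eq3}, whereby $H(y) = [1 + u(y/\lambda)]/\lambda$ for $y \in [0,\lambda]$, $s = y/\lambda \in [0,1]$. The lower bound $H(y) \ge 0$ is then immediate from the unilateral admissibility constraint $u \ge -1$ on $[0,1]$ in \eqref{eq5}$_2$. For the upper bound, I would directly invoke the second chain of inequalities in \eqref{eq26}, namely $u(s) \le u(s_M) \le -1 + \lambda M$, which after dividing by $\lambda$ yields $H(y) \le M$ for every $y \in [0,\lambda]$.

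The strict inequality $H(y) < M$ requires slightly more work. I would argue by contradiction: if $H(y_0) = M$ for some $y_0$, then $s_0 := y_0/\lambda$ is a maximizer of $u$ with $u(s_0) = -1 + \lambda M > -1$, so $s_0 \in \mathcal{G}_u$ and the nonlocal ODE of Corollary \ref{cor2.5} applies on a maximal subinterval $I$ of $\mathcal{G}_u$ containing $s_0$. Evaluating \eqref{eq21} at $s_0$ and using the interior-maximum condition $u''(s_0) \le 0$ yields $\dot{W}^*(M) \le \bar w$, where $\bar w$ denotes the average of $\dot{W}^*([1+u]/\lambda)$ over $I$. But the qualitative shape of $\dot{W}^*$ encoded in \eqref{eq-2} (concave on $[0,\kappa)$, convex on $(\kappa,\infty)$, with $\dot{W}^*(0^+) = \dot{W}^*(M) = \gamma$) forces $\dot{W}^*(H) \le \gamma$ on $[0,M]$, with equality only at the endpoints $H = 0$ and $H = M$. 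Hence $\bar w = \gamma$, which compels $H \in \{0, M\}$ almost everywhere on $I$; continuity of $H$ together with $H(s_0) = M$ then gives $H \equiv M$ on $I$. This contradicts the boundary condition $u = -1$ at any endpoint of $I$ lying in $\partial \mathcal{G}_u$; and if instead $I = [0,1]$, the constant profile $u \equiv \lambda M - 1$ combined with $\int_0^1 u\,ds = 0$ forces $\lambda = 1/M$, contradicting the strict inequality $\lambda > 1/M$ from Theorem \ref{thm2.6}.

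The non-strict bound is essentially a one-line reading of Theorem \ref{thm2.6}, so the only genuine obstacle is ruling out equality, which hinges on the sharp information that $\gamma$ is attained by $\dot{W}^*$ only at $H = 0$ and $H = M$.
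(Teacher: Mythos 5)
Your proof is correct, and the main part---translating Theorem~\ref{thm2.6} back to the $H$-variable via \eqref{eq3}, reading off $H\ge 0$ from \eqref{eq5}$_2$ and $H\le M$ from \eqref{eq26}---is precisely the implicit argument behind the paper's unproved corollary. What you add, and what the paper's chain of inequalities \eqref{eq25}--\eqref{eq26} by itself does not deliver, is the \emph{strictness} of the upper bound: the proof of Theorem~\ref{thm2.6} produces $[1+u(s_M)]/\lambda\le M$ non-strictly, and strictness follows automatically only when $u(s_m)>-1$ (so that $\dot W^*(H_m)<\gamma$ forces $\dot W^*(H_M)<\gamma$, hence $H_M<M$); when $s_m\in\partial\mathcal{G}_u$ and $u(s_m)=-1$ one gets $\dot W^*(H_m)=\gamma$ and the theorem alone gives only $H_M\le M$. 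Your contradiction argument closes exactly that case, and it is sound: from $\bar w=\gamma$ you correctly conclude $H\equiv M$ on the component $I$ of $\mathcal{G}_u$ containing the maximizer, then rule it out either by $H=0$ on $\partial\mathcal{G}_u\cap\partial I$ or, when $I=[0,1]$, by $\int_0^1 u\,ds=0$ forcing $u\equiv 0$ and hence $\lambda=1/M$, contradicting $\lambda>1/M$. So this is the same route as the paper, carried out with the extra care needed to justify the strict inequality the paper states without proof.
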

%

\section{Equilibria via Global Bifurcation}

In this section we consider the existence of other critical points via 
bifurcation from the trivial solution. In particular, note that  $ 0\in 
\mathcal{K}^{o}$, and thus, according to Proposition \ref{prop2.2}, the trivial 
solution branch  $ u\equiv 0 $ for all  $ \lambda \in (0,\infty )$, satisfies 
\eqref{eq12}. To begin, we consider the formal linearization of \eqref{eq12} at  $ u=0: $ 
\begin{equation}
\label{eq27}
\begin{array}{l}
 -\varepsilon {h}''(s)+\lambda^{2}\Ddot{{W}}^{\ast }(1/\lambda )h(s)=0,  \quad
0<s<1, \\ 
 {h}'(0)={h}'(1)=0,\mbox{\, }\int_0^1 h(s)ds =0, \\ 
 \end{array}
\end{equation}
where  $ \Ddot{{W}}^{\ast }:=\frac{d^{2}W^{\ast }}{dH^{2}}$.  Clearly \eqref{eq27} 
admits nontrivial solutions
\begin{equation}
\label{eq28}
h(s)=h_{n}(s) :=\cos (n\pi s),\mbox{\, }n=1,2,\ldots ,
\end{equation}
provided that the characteristic equation
\begin{equation}
\label{eq29}
\frac{\varepsilon n^{2}\pi^{2}}{\lambda^{2}}=-\Ddot{{W}}^{\ast }(1/\lambda 
),
\end{equation}
has corresponding roots  $ \lambda_{n} ,\mbox{\, }n=1,2,\ldots  $  For each 
value of  $ n\in \mathbb{N}$,  the left side of \eqref{eq29} defines a parabola in the 
variable  $1/\lambda$.  Then taking into account the graph of 
 $ -\Ddot{{W}}^{\ast }(\cdot )$,  cf. \eqref{eq-2} and Fig.~\ref{Fig5b}, we conclude that 
\eqref{eq29} has a countable infinity of simple transversal roots:
\begin{equation}
\label{eq30}
1/\kappa <\lambda_{1} <\lambda_{2} <\ldots ,
\end{equation}
each of which corresponds to a respective nontrivial solution \eqref{eq28}. 

\begin{figure}
  \centering
 \subfloat[]{\includegraphics[width=0.42\textwidth]{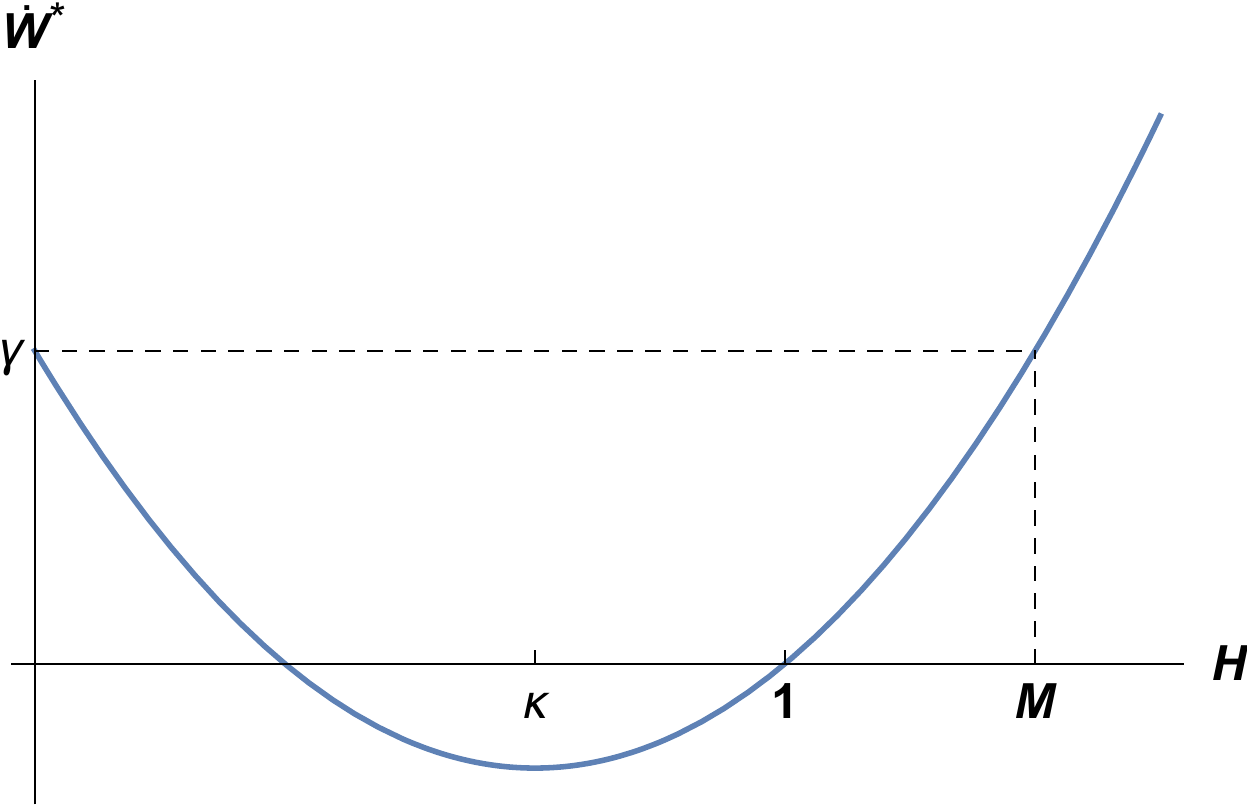}
  \label{Fig5a}}
  \hspace{0.8cm}
 \subfloat[]{\includegraphics[width=0.42\textwidth]{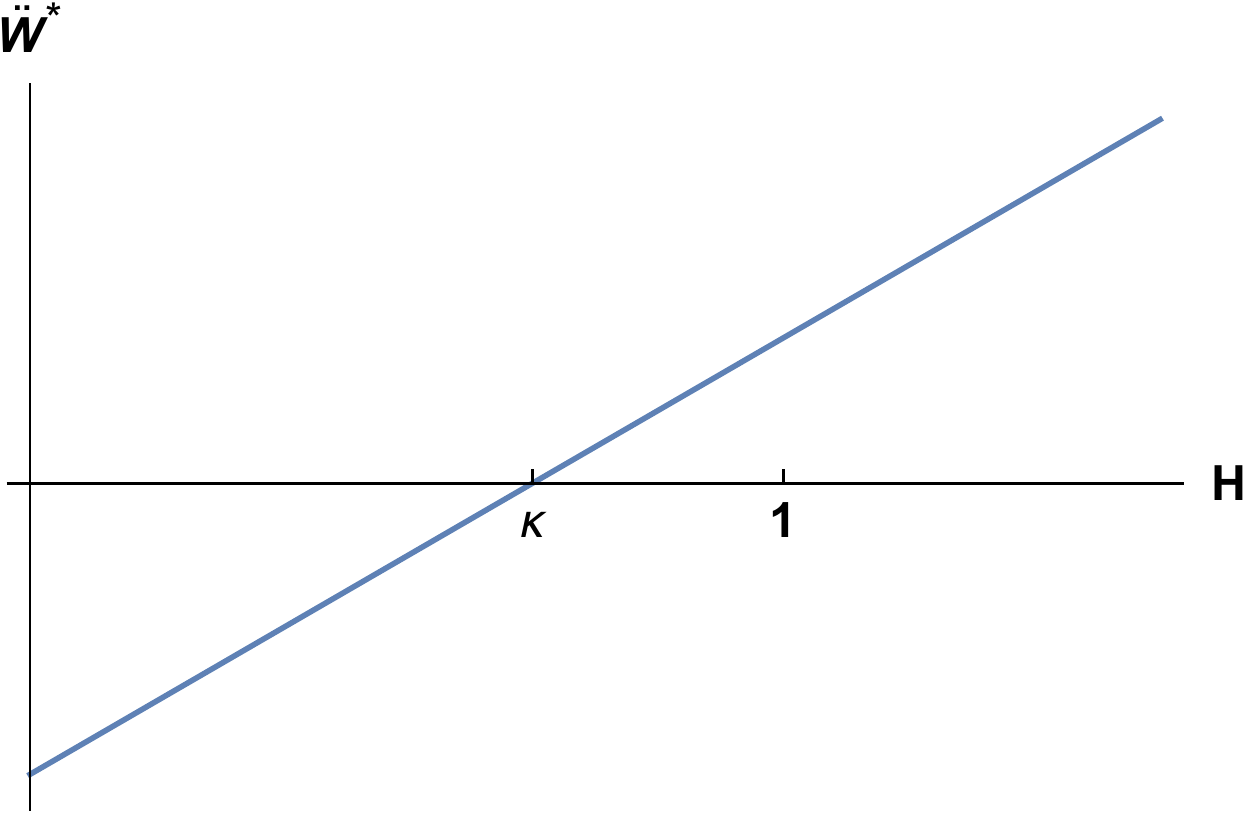}
  \label{Fig5b}} 
  \caption{(A) $\dot\sw$  \textit{vs}  $H$. (B)  $\ddot\sw$  \textit{vs}  $H$ .}
  \label{Fig5}
\end{figure}

To carry out a rigorous bifurcation analysis, we follow the approach in  \cite{globi}. 
We first express \eqref{eq9} abstractly via
\begin{equation}
\label{eq31}
\left\langle {\varepsilon u+G(\lambda ,u),v-u} \right\rangle  \ge  
0,\mbox{\, for\, all\, }v\in K,
\end{equation}
with  $ G:(0,\infty)\times \mathcal{H}\to \mathcal{H} $ defined by
\begin{equation}
\label{eq32}
\left\langle {G(\lambda ,u),v} \right\rangle :=\lambda^{3}\int_0^1 
{\Dot{{W}}^{\ast }\left( {[1+u(x)]/\lambda } \right)v(s)ds,\mbox{\, }} 
\end{equation}
for all  $ v\in \mathcal{H},\lambda \in (0,\infty )$.  Observe 
that $ \left\langle {G(\lambda ,0),v} \right\rangle \equiv 0 $  for all  $ v\in 
\mathcal{H},\lambda \in (0,\infty )$.  Since  $ \Dot{{W}}^{\ast } $  is 
continuous, the compact embedding  $ H^{1}(0,1)\to C[0,1] $  implies that  $ G $  is 
continuous and compact. Let  $ P_{\mathcal{K}}  $  denote the closest-point 
projection of  $ \mathcal{H} $  onto  $ \mathcal{K}$, i.e., for any  $ f\in 
\mathcal{H}, \quad u=P_{\mathcal{K}} f\Leftrightarrow u\in \mathcal{K} $  and 
 $ \left\langle {u-f,v-u} \right\rangle  \ge  0 $  for all  $ v\in 
\mathcal{K}$.  Then \eqref{eq31} is equivalent to the operator equation
\begin{equation}
\label{eq33}
\varepsilon u+F(\lambda ,u)=0,
\end{equation}
where  $ F:=P_{\mathcal{K}} \circ G$.  Due to the continuity of the 
projection $ P_{\mathcal{K}} $,  it follows that  $ F $  is continuous and compact 
on  $ \mathbb{R}\times \mathcal{K}$.  Thus, the Leray-Schauder degree of 
 $ u\mapsto u+\varepsilon^{-1}F(\lambda ,u) $  is well-defined.

Clearly  $ F(\lambda ,0)\equiv 0$,  and  $u\mapsto  F\equiv G $  in a neighborhood of  $ 0\in 
\mathcal{K}^{o}$.  By embedding, it follows that $ u\mapsto G(\lambda ,u) $  is 
differentiable on that same neighborhood, with Fr\'{e}chet derivative 
 $ A(\lambda )h:=D_{u} G(\lambda ,0)h=\lambda^{2}\Ddot{{W}}^{\ast 
}(1/\lambda )h $  for all  $ h\in \mathcal{H}$.  Hence, the rigorous 
linearization of \eqref{eq33} at the trivial solution reads 
\begin{equation}
\label{eq34}
\left\langle {\varepsilon h+A(\lambda )h,v} \right\rangle =0\mbox{\, for\, 
all\, }h,v\in \mathcal{H},
\end{equation}
and an integration by parts shows that \eqref{eq34} is equivalent to the formal 
linearization \eqref{eq27}. In particular, \eqref{eq28}-\eqref{eq30} imply that  $ (\lambda_{n} 
,0)\in (0,\infty )\times \mathcal{K}^{o},\mbox{\, }n=1,2,\ldots$, are potential 
bifurcation points of \eqref{eq33}. 

We observe that $ A(\lambda ):\mathcal{H}\to \mathcal{H} $  is compact. Also, 
for all  $ \lambda \ne \lambda_{n}$ and $n=1,2,\ldots$,  $u=0$ is the only 
solution of \eqref{eq33} on  $ B_{r} (0)\subset \mathcal{H}$, denoting some 
sufficiently small ball of radius  $ r>0 $  centered at the origin. Accordingly, 
the Leray-Schauder linearization principle shows that the topological degree 
of  $ u\mapsto u+\varepsilon^{-1}F(\lambda ,u) $  on  $ B_{r} (0)\subset 
\mathcal{H} $ is given by
\begin{equation}
\label{eq35}
\deg (I+\varepsilon^{-1}F(\lambda ,\cdot ),B_{r} (0),0)=(-1)^{m(\lambda 
)}\mbox{,}
\end{equation}
where  $ m(\lambda ) $  denotes the number of negative eigenvalues, counted by 
algebraic multiplicity, of the linear operator  $ I+\varepsilon^{-1}A(\lambda 
)$, for  $ \lambda \in \{ \lambda \in (0,\infty ):\lambda \ne \lambda_{n} ,\; n=1,2,\ldots\}$.  We 
can now state a global bifurcation existence result, cf.  \cite{rabin}: 

\begin{thm}\label{thm3.1} Let  $ \mathcal{S}\subset (0,\infty )\times K $ denote the closure of all nontrivial solution pairs $ (\lambda 
,u)\mbox{\, (}u\ne 0) $ of \eqref{eq33},{ and let } $ \mathcal{C}_{n}  $ denote the connected component of  $ \mathcal{S} $ containing $ (\lambda_{n} 
,0)$. Then   $(\lambda_{n} ,0),n=1,2,\ldots $, is a point of global bifurcation, viz.,  each solution branch  $ \mathcal{C}_{n}  $  is characterized by at least one of the following:

(i)  $ \mathcal{C}_{n}  $  is unbounded in $ (0,\infty)\times \mathcal{H}$,  

(ii) $ \{(\lambda_{k} ,0)\}\subset \mathcal{C}_{n} $,  $k\ne n$. 
\end{thm}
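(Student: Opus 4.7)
The plan is to apply a Rabinowitz-type global bifurcation alternative in the variational-inequality framework of \cite{globi}. The preceding setup has already reduced \eqref{eq9} to the fixed-point equation \eqref{eq33} with $F = P_\mathcal{K} \circ G$ continuous and compact on $\mathbb{R} \times \mathcal{K}$, and has identified the Leray-Schauder degree near the trivial branch via \eqref{eq35}. What remains is to establish a parity jump of the local index at each $\lambda_n$ and then to carry out the standard global-alternative contradiction.

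For the index jump, I would use that $0 \in \mathcal{K}^o$, so that in a small neighborhood of the trivial solution the projection $P_\mathcal{K}$ acts as the identity and the eigenvalue problem for $I + \varepsilon^{-1} A(\lambda)$ on $\mathcal{H}$ is equivalent, after an integration by parts, to \eqref{eq27}. Its nontrivial solutions are precisely the cosines \eqref{eq28}, each geometrically and (by symmetry of the operator) algebraically simple. Moreover, the characteristic equation \eqref{eq29} has transversal roots, as already noted via \eqref{eq30} and Fig.~\ref{Fig5b}. Hence exactly one eigenvalue of $I + \varepsilon^{-1} A(\lambda)$ crosses zero as $\lambda$ passes through $\lambda_n$, so $m(\lambda)$ jumps by one and $(-1)^{m(\lambda)}$ reverses sign; by \eqref{eq35} the local Leray-Schauder index of the trivial branch therefore reverses across $\lambda_n$.

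Next, assuming for contradiction that both alternatives fail---$\mathcal{C}_n$ is bounded in $(0,\infty) \times \mathcal{H}$ and meets $\{(\lambda,0) : \lambda > 0\}$ only at $(\lambda_n, 0)$---I would invoke the a priori bounds of Theorem~\ref{thm2.6} to confine $\mathcal{C}_n$ to a slab of the form $[1/M + \delta, \Lambda] \times \overline{B_R(0)}$. Compactness of $F$ renders the nontrivial solution set closed in $(0,\infty) \times \mathcal{H}$, so $\mathcal{C}_n$ is itself compact. A Whyburn-type separation argument then produces a bounded open neighborhood $\mathcal{O} \supset \mathcal{C}_n$ carrying no solutions on $\partial \mathcal{O}$, whose $\lambda$-projection is an interval $(\lambda_-, \lambda_+) \ni \lambda_n$ containing no other characteristic value, and whose slices $\mathcal{O}_{\lambda_\pm}$ reduce to a small ball centered at the trivial solution. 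Homotopy invariance forces the degree $\deg(I + \varepsilon^{-1} F(\lambda, \cdot), \mathcal{O}_\lambda, 0)$ to be constant in $\lambda \in [\lambda_-, \lambda_+]$, while excision identifies its values at $\lambda_\pm$ with the indices computed in the previous paragraph, which have opposite signs. This contradiction proves the theorem.

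The main obstacle is framework-theoretic rather than topological. Although the local degree near $(\lambda_n, 0)$ is genuine Leray-Schauder degree (since $0$ lies interior to $\mathcal{K}$), the excision and homotopy-invariance steps in the global argument traverse solutions lying on $\partial \mathcal{K}$, where $F = P_\mathcal{K} \circ G$ is only Lipschitz and not Fr\'{e}chet differentiable. Ensuring that the degree remains well-defined on open subsets of $\mathcal{H}$ meeting $\partial \mathcal{K}$, and that homotopy invariance persists across such solutions, is precisely the content of the variational-inequality degree theory of \cite{globi}, which is what justifies citing \cite{globi} alongside \cite{rabin} rather than relying on the latter alone.
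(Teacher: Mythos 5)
Your proposal is correct and follows essentially the same route as the paper: both hinge on the a priori bound $\lambda > 1/M$ (which confines $\mathcal{C}_n$ away from the parameter boundary) together with the parity jump $m(\lambda_+) = m(\lambda_-) + 1$ established via transversality of the roots of \eqref{eq29}, after which the global alternative is invoked from \cite{globi}/\cite{rabin}. You spell out the Whyburn/Rabinowitz separation-and-homotopy argument and the issue of variational-inequality degree on $\partial\mathcal{K}$ in more detail than the paper, which treats those steps as packaged by the cited references, but the underlying argument is the same.
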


\bpr  We first observe that \eqref{eq22}$_{1\, } $ implies 
 $ \mathcal{C}_{n} \subset (1/M,\infty )\times \mathcal{K}$.  So 
it is enough to demonstrate that  $ \deg (I+\varepsilon^{-1}F(\lambda_{-} 
,\cdot ),B_{r} (0),0)\ne \deg (I+\varepsilon^{-1}F(\lambda_{+} ,\cdot 
),B_{r} (0),0)$,  for some $ \lambda_{-} \in (\lambda_{n} -\delta ,\lambda 
_{n} ) $  and  $ \lambda_{+} \in (\lambda_{n} ,\lambda_{n} +\delta )$,  for 
each  $ n=1,2,\ldots $, where  $ \delta >0 $  is sufficiently small. This is indeed the 
case here; the transversality of the roots \eqref{eq30} of \eqref{eq29} insures that 
 $ m(\lambda_{+} )=m(\lambda_{-} )+1 $  for each  $ n=1,2,\ldots $. Thus, the desired 
conclusion follows from \eqref{eq35}.\epr

By virtue of embedding, each solution branch  $ \mathcal{C}_{n} \subset 
(0,\infty )\times \mathcal{H},\mbox{\, }n=1,2,\ldots $, forms a continuum in 
 $ (0,\infty )\times C[0,1]$,  and for any  $ (\lambda ,u)\in \mathcal{C}_{n} 
$, we have that  $ u\in C^{1}[0,1]$, cf. Theorem \ref{thm2.3}. With that in hand, we now 
demonstrate that each branch is characterized by distinct nodal properties, 
generalizing a well-known result to our setting, cf.  \cite{cran,rabin}. Let 
 $ \mathcal{O}_{\ell }  $  denote the open set of all functions  $ v\in 
C^{1}[0,1] $  having precisely  $ \ell  $  zeros in  $ (0,1)$, each of which is 
simple, with  $ v(0)\ne 0 $  and  $ v(1)\ne 0$.  Let  $ \mathcal{C}_{n}^{o}  $  denote 
the component of  $ \mathcal{C}_{n} \cap (0,\infty )\times \mathcal{K}^{o} $  
containing the bifurcation point  $ (\lambda_{n} ,0), \quad n=1,2,\ldots  $  We call 
 $ \mathcal{C}_{n}^{o} \subset \mathcal{C}_{n}  $ a sub-branch. The argument 
given in \cite{cran,rabin} shows that nodal properties of solutions on global 
branches of 2$^{nd} $ -order ODE such as \eqref{eq12} are inherited from the 
corresponding eigenfunction \eqref{eq28} and can change only at the trivial 
solution. As such,  $ \mathcal{C}_{n}^{o} \backslash \{(\lambda_{n} 
,0)\}\subset (0,\infty )\times \mathcal{O}_{n} ,\mbox{\, }n=1,2,\ldots  $  In 
fact, this property holds for each of the global solution branches.

\begin{prop}\label{prop3.2}Each of the global bifurcating branches $ \mathcal{C}_{n}  $  of Theorem 3.1, is characterized by a distinct nodal pattern, viz.,
\begin{equation}
\label{eq36}
\mathcal{C}_{n} \backslash \{(\lambda_{n} ,0)\}\subset (0,\infty )\times 
\mathcal{O}_{n}, \quad n=1,2,\ldots 
\end{equation}
As such$, \mathcal{C}_{n} \cap \mathcal{C}_{m} =\emptyset $,  for all $ m\ne n$,  (alternative (ii) of Theorem 3.1 is not possible) and each  $ \mathcal{C}_{n} 
,n=1,2,\ldots , $  is unbounded in  $ \mathbb{R}\times \mathcal{H}$.  \end{prop}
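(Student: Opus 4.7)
My plan is to extend the nodal property already established for the interior sub-branch $\mathcal{C}_n^o$ to all of $\mathcal{C}_n$, and then derive the other two claims as immediate consequences. The argument has an open/closed flavor in the topology of $\mathcal{C}_n$.

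First I would upgrade the topology along the branch. The a priori bounds in Theorem \ref{thm2.6} supply a uniform $C^1$ bound on solutions along $\mathcal{C}_n$ in any bounded $\lambda$-range. Combined with the pointwise ODE on $\mathcal{G}_u$ from Corollary \ref{cor2.5}, which yields a uniform bound on $u''$ on the glued set, and the fact that $u' \equiv 0$ on $\mathcal{B}_u$, an Arzel\`a--Ascoli argument upgrades $H^1$-convergence of solutions in $\mathcal{C}_n$ to $C^1$-convergence. Since $\mathcal{O}_n$ is open in $C^1[0,1]$, the set $\Omega_n := \mathcal{C}_n \cap ((0,\infty) \times \mathcal{O}_n)$ is relatively open in $\mathcal{C}_n \setminus \{(\lambda_n,0)\}$, and by the stated result for $\mathcal{C}_n^o$ it contains a punctured neighborhood of the bifurcation point.

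The core of the argument is to show $\Omega_n$ is also relatively closed. Take $(\lambda_k, u_k) \in \Omega_n$ with $(\lambda_k, u_k) \to (\lambda, u)$ in $C^1$ and $u \ne 0$; by continuity the $n$ simple zeros of each $u_k$ accumulate at zeros of $u$. The only obstructions to $u \in \mathcal{O}_n$ are a non-simple zero at some interior $s^* \in (0,1)$ or a zero at $s^* \in \{0,1\}$. In either case $u(s^*) = 0 \ne -1$ places $s^*$ in the open glued set $\mathcal{G}_u$, hence on some component $(a,b)$ of $\mathcal{G}_u$ on which $u \in C^2$, $u$ solves the nonlocal second-order ODE of Corollary \ref{cor2.5}, and $u'(a) = u'(b) = 0$. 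The standard Rabinowitz--Crandall approach would apply ODE uniqueness at the double zero: both $u$ and $v \equiv 0$ satisfy the same initial-value problem at $s^*$, forcing $u \equiv 0$. In our setting the Lagrange multiplier $\mu$ is solution-dependent, so $v \equiv 0$ solves the same pointwise ODE only when $\mu = \lambda^3 \dot W^\ast(1/\lambda)$. To pin down this value I would combine the phase-plane first integral $\frac{\varepsilon}{2}(u')^2 = \lambda^4 W^\ast((1+u)/\lambda) - \mu u - E$ on $(a,b)$ with the boundary data $u(a) = -1$, $u'(a) = 0$, the identity $W^\ast(0) = 0$, and the nonlocal representation of $\mu$ from Corollary \ref{cor2.5}; ODE uniqueness then forces $u \equiv 0$ on $(a,b)$, and continuity across $\partial \mathcal{G}_u$ propagates this to $u \equiv 0$ on $[0,1]$, contradicting nontriviality.

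With both openness and closedness in hand, $\Omega_n = \mathcal{C}_n \setminus \{(\lambda_n,0)\}$, giving \eqref{eq36}. Disjointness $\mathcal{C}_n \cap \mathcal{C}_m = \emptyset$ for $n \ne m$ then follows from $\mathcal{O}_n \cap \mathcal{O}_m = \emptyset$, which rules out alternative (ii) in Theorem \ref{thm3.1}; alternative (i) therefore holds for every $n$, giving unboundedness of each $\mathcal{C}_n$ in $\mathbb{R} \times \mathcal{H}$. I expect the closedness step to be the main obstacle: the Rabinowitz--Crandall uniqueness argument depends critically on $v \equiv 0$ solving the same pointwise equation as $u$, which in our nonlocal, variational-inequality setting requires pinning the multiplier to a specific value. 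The proof's success hinges on coupling the phase-plane first integral on each component of $\mathcal{G}_u$ with the rigidity supplied by the approximating interior solutions in $\mathcal{C}_n^o$.
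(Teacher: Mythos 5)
Your open/closed scaffolding, the $C^1$-upgrade via the \emph{a priori} bounds and Arzel\`a--Ascoli, and the reduction to a double zero $s^\ast\in\mathcal{G}_u$ all match the paper's strategy, which likewise argues by contradiction from a boundary solution pair $(\lambda,u)\in\mathcal{C}_n\setminus\mathcal{C}_n^o$ with $u\in\partial\mathcal{O}_n$ and then invokes ``the uniqueness theorem.'' You deserve credit for flagging a real subtlety that the paper's one-line appeal to uniqueness slides past: because $\mu$ is solution-dependent, the zero function only solves the pointwise equation \eqref{eq13} when $\mu=\lambda^{3}\Dot{W}^{\ast}(1/\lambda)$.

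The difficulty is that your proposed repair does not close that gap. Pinning $\mu$ via the first integral together with $u(a)=-1$, $u'(a)=0$, $u(s^\ast)=u'(s^\ast)=0$, and $W^{\ast}(0)=0$ yields $\mu=\lambda^{4}W^{\ast}(1/\lambda)$, whereas $v\equiv 0$ solves \eqref{eq13} only if $\mu=\lambda^{3}\Dot{W}^{\ast}(1/\lambda)$. These two quantities differ generically: writing $W^{\ast}(H)=HW(1/H)$ gives $\lambda^{4}W^{\ast}(1/\lambda)=\lambda^{3}W(\lambda)$ and $\lambda^{3}\Dot{W}^{\ast}(1/\lambda)=\lambda^{3}[W(\lambda)-\lambda\Dot{W}(\lambda)]$, which agree only when $\Dot{W}(\lambda)=0$, i.e.\ never for $\lambda>1$ under \eqref{eq-1}. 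So after pinning, $v\equiv 0$ is still not a solution of the ODE you are applying uniqueness to, and the claimed conclusion $u\equiv 0$ on $(a,b)$ simply does not follow; ODE uniqueness merely says $u$ is the unique trajectory through $(0,0)$, which is a non-constant orbit when $\mu\ne\lambda^{3}\Dot{W}^{\ast}(1/\lambda)$. (There is also a smaller inconsistency in the last line of your sketch: if $u\equiv 0$ on $(a,b)$ then $u(a)=0$, contradicting $u(a)=-1$ outright, so no ``propagation across $\partial\mathcal{G}_u$'' is available or needed.) What the pinned value $\mu=\lambda^{4}W^{\ast}(1/\lambda)$ actually buys is geometric: setting $V(u)=\lambda^{4}W^{\ast}([1+u]/\lambda)-\mu u$, one finds $V(0)=V(-1)$ and, using the Shield relation and monotonicity of $W$ on $(1,\infty)$, that $V(u)<V(-1)$ for $u$ slightly positive; hence the level set $\frac{\varepsilon}{2}(u')^2=V(u)-V(-1)$ through $(-1,0)$ and $(0,0)$ is confined to $u\in[-1,0]$. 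Since $\mu$ is a single constant across components (cf.\ the proof of Theorem~\ref{thm2.3}) and every component of $\mathcal{G}_u$ has an endpoint in $\partial\mathcal{G}_u$ or carries the Neumann data, the same confinement gives $u\le 0$ on all of $[0,1]$, and the zero-mean constraint \eqref{eq5}$_1$ then forces $u\equiv 0$, contradicting $u(a)=-1$. That phase-plane/constraint argument, not IVP uniqueness with a pinned $\mu$, is what closes the step you correctly identified as the crux.

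\end{document}
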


\bpr  Suppose that \eqref{eq36} does not hold. Given that 
 $ \mathcal{C}_{n}^{o} \backslash \{(0,\lambda_{n} )\}\subset (0,\infty 
)\times \mathcal{O}_{n} ,\mbox{\, }n=1,2,\ldots , $  it follows that there 
exists a nontrivial solution pair $ (\lambda ,u)\in \mathcal{C}_{n} \backslash 
\mathcal{C}_{n}^{o}  $  with  $ u\in \partial \mathcal{O}_{n} $.  This means 
there is some  $ x_{o} \in \mathcal{G}_{u}  $ such that  $ u(x_{o} )={u}'(x_{o} 
)=0$.  But then by the uniqueness theorem for \eqref{eq12}, it follows that 
 $ u\equiv 0 $  on  $ [a,b]\subset \overline {\mathcal{G}_{u} }  $ for all such 
 $ (a,b)\subset \mathcal{G}_{u}  $ with  $ a,b\in \partial \mathcal{G}_{u} , $ or on 
 $ [0,b] $  with  $ b\in \partial \mathcal{G}_{u} , $ or on  $ [a,1] $  with  $ a\in 
\partial \mathcal{G}_{u} $.  Theorem \ref{thm2.3} together with \eqref{eq5}$_{1\, } $ then 
imply that  $ u\equiv 0 $ on  $ [0,1], $ i.e., nodal properties change only at the 
trivial solution. Finally, \eqref{eq36} along with the observation that 
 $ \mathcal{O}_{n} \cap \mathcal{O}_{m} =\emptyset , $  for all  $ m\ne n, $ imply 
that  $ \mathcal{C}_{n} \cap \mathcal{C}_{m} =\emptyset , $ for all  $ m\ne n, $  
and hence, alternative (ii) of Theorem 3.1 is not possible.  \epr

\section{Fracture and Stability}

\tred{We consider fractured solutions, which involve a nonempty broken set \eqref{eq14}, or at least one point $y\in[0,1]$ where $H(y)=0$. The onset of fracture occurs when the broken set consists of isolated points. From such solutions, we construct others where the broken set consists of one or more intervals, e.g., \eqref{eq44} below, where necessarily the inverse deformation is constant, and the associated original deformation is  discontinuous with opened cracks.}   

We demonstrate that fracture occurs on each of the bifurcating 
solution branches, and then obtain stability/instability results for some fractured 
solutions.  We find it convenient to return to the original variables via 
\eqref{eq3}, in which case the equilibrium equation \eqref{eq21} reads 
\begin{equation}
\label{eq37}
\begin{array}{l}
 -\varepsilon {H}''+\Dot{{W}}^{\ast }(H)=\varpi :=\frac{1}{\lambda_{\ast } 
}\int_{a_{\ast } }^{b_{\ast } } {\Dot{{W}}^{\ast }} (H(\tau ))d\tau \mbox{\, 
on\, }(a_{\ast } ,b_{\ast } ), \\ 
 \mbox{\, \, \, \, \, \, }{H}'(a_{\ast } )={H}'(b_{\ast } )=0, \\ 
 \end{array}
\end{equation}
where  $ a_{\ast } :=\lambda a,\mbox{\, }b_{\ast } :=\lambda b $ and  $ \lambda 
_{\ast } :=b_{\ast } -a_{\ast } $.  Clearly any solution of \eqref{eq21} gives a 
solution of \eqref{eq37}, and vice-versa. 

\begin{figure}
  \centering
 \subfloat[]{\includegraphics[width=0.42\textwidth]{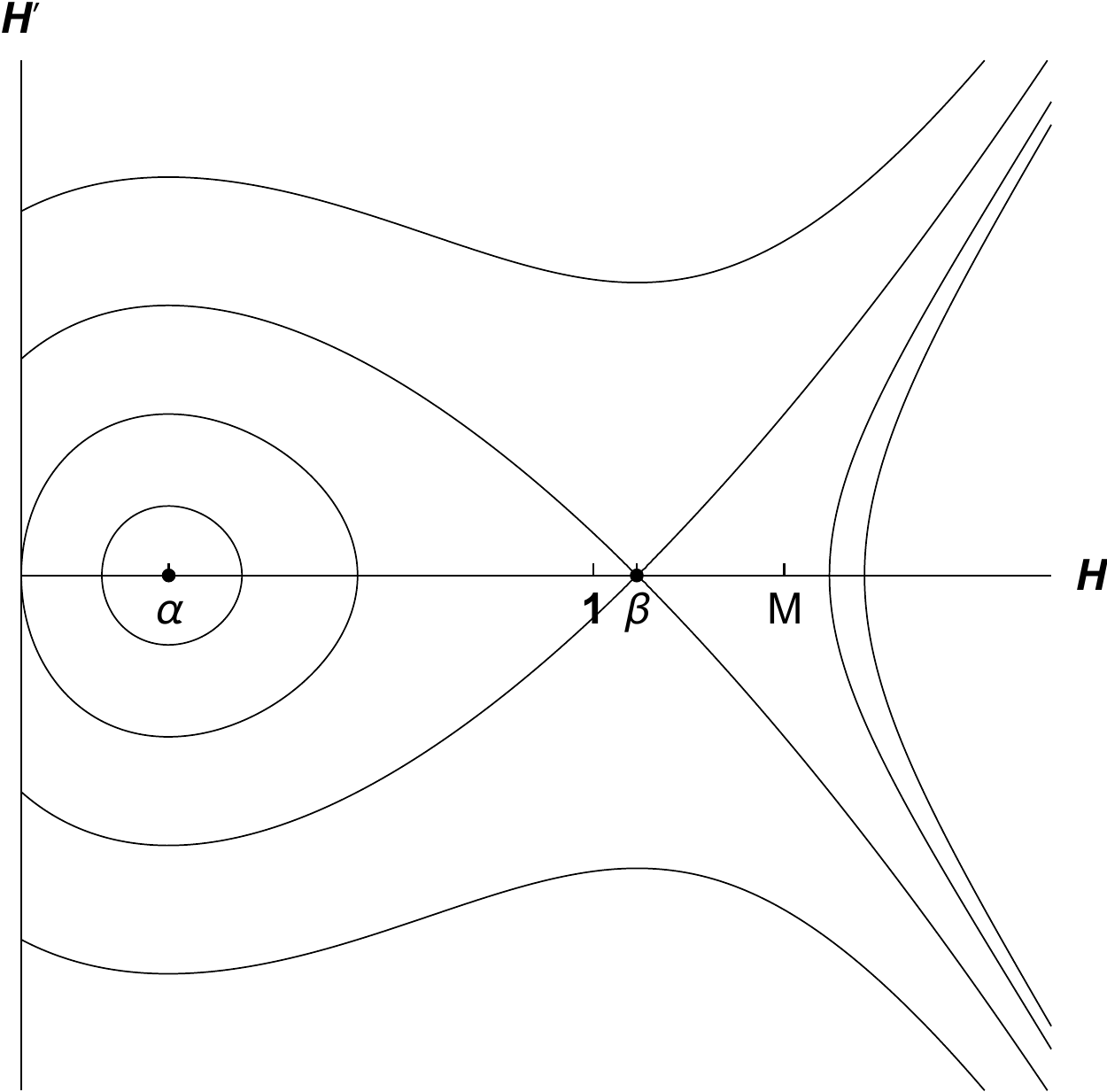}
  \label{Fig6a}}
  \hspace{0.8cm}
 \subfloat[]{\includegraphics[width=0.42\textwidth]{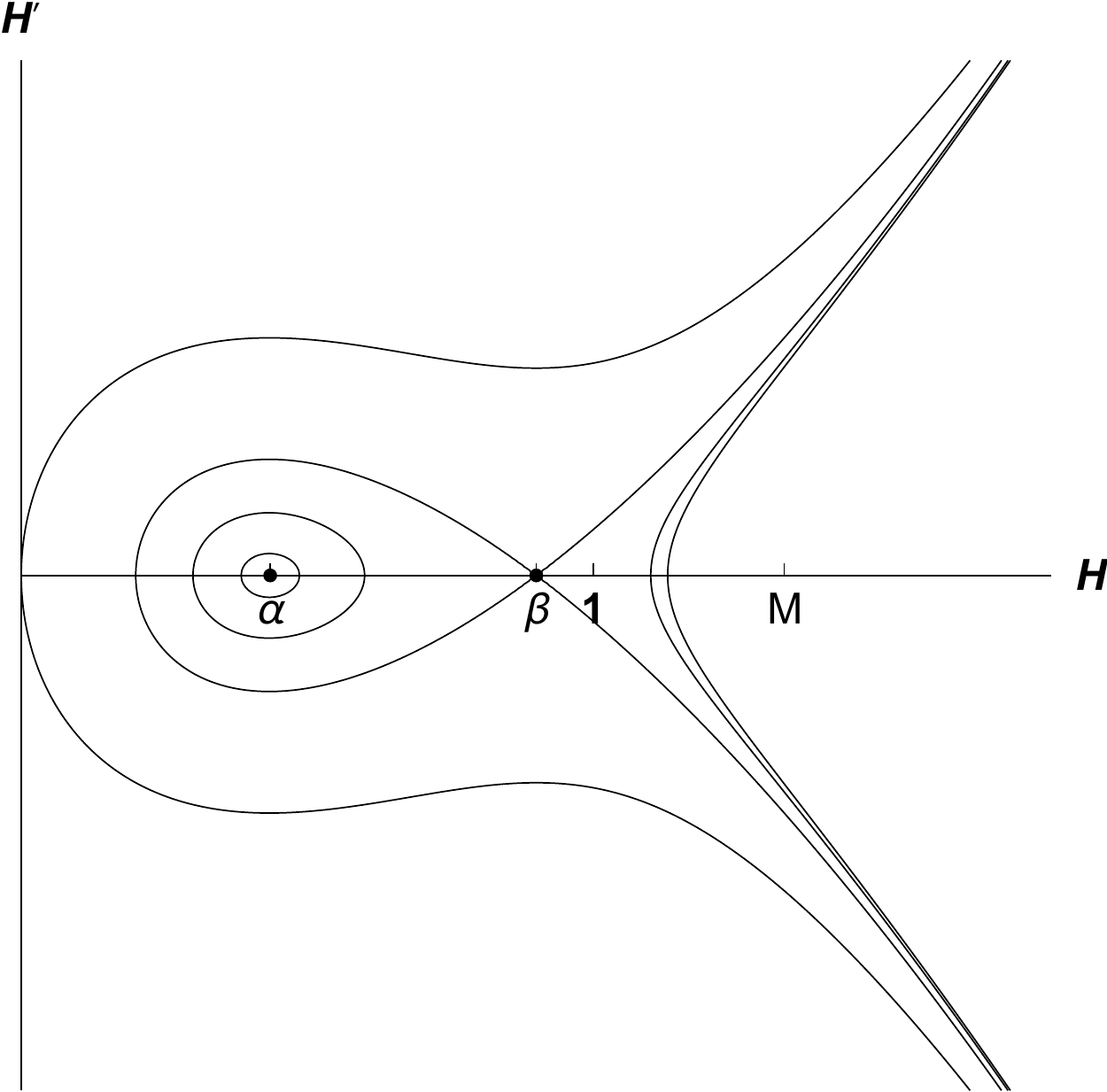}
  \label{Fig6b}} 
  \caption{Phase portraits for \eqref{eq39}: (A) $\dot\sw(\kappa)<\varpi\le0$  (B)  $0<\varpi<\gamma$ }
  \label{Fig6}
\end{figure}

In order to glean more information, we identify \eqref{eq37}$_{1} $  with a dynamical 
system, treating the independent variable  $ "y" $  as a time-like. The critical 
points or ``equilibria'' correspond to solutions of the algebraic equation
\begin{equation}
\label{eq38}
\Dot{{W}}^{\ast }(H)-\varpi =0.
\end{equation}
The graph of  $ \Dot{{W}}^{\ast } $ reveals that when  $ \Dot{{W}}^{\ast }(\kappa 
)<\varpi  \le  \gamma :=\Dot{{W}}^{\ast }(0), $ there are two solutions of 
\eqref{eq38}, denoted  $ H\equiv \alpha , $  and  $ H\equiv \beta , $ where  $ 0 \le  
\alpha <\beta  \le  M$.  cf. Fig.~\ref{Fig5a}. In addition, \eqref{eq37}$_{1} $  admits 
the first integral
\begin{equation}
\label{eq39}
\frac{\varepsilon }{2}({H}')^{2}-[W^{\ast }(H)-\varpi H]=\Gamma ,
\end{equation}
where  $ \Gamma  $  is a constant. With \eqref{eq39} in hand, we obtain the phase 
portraits depicted in Fig.~\ref{Fig6}, where  $ \alpha  $  is a ``center'', and  $ \beta 
 $  is a ``saddle''. According to the boundary conditions \eqref{eq37}$_{2} $, the 
trajectories should ``start and stop'' on the  $ H\mbox{-} $ axis. As such, we 
are interested only in closed orbits about the center
\begin{equation}
\label{eq40}
\alpha =1/\lambda ,
\end{equation}
such that  $ H \ge  0, $ cf. \eqref{eq2}$_{2} $. The right side of \eqref{eq40} follows 
from \eqref{eq3}, given that the former represents the trivial solution, viz., 
 $ H\equiv 1/\lambda \mbox{\, }\Leftrightarrow u\equiv 0$.  

\begin{rem}\label{rem4.1}In view of \eqref{eq40} and Fig.~\ref{Fig6}, it follows that 
 $ \lambda >1 $  for non-negative solutions  $ H \ge  0$, which improves \eqref{eq26}$_1$.  Referring again to 
\eqref{eq38}, the cases  $ \varpi  \le  \Dot{{W}}^{\ast }(\kappa ) $  and  $ \varpi 
>\gamma  $ are not associated with solutions of \eqref{eq37}. The first leads to 
either one degenerate critical point  $ \alpha =\beta  $  (for  $ \varpi 
=\Dot{{W}}^{\ast }(\kappa )) $  or no critical points, while the second case 
yields only one (positive) critical point. \end{rem}

According to Proposition \ref{prop2.2}, we infer that  $ (a_{\ast } ,b_{\ast } 
)=(0,\lambda ) $  in \eqref{eq37} for any strictly positive, non-constant solution 
 $ H>0, $  i.e., for  $ u\in \mathcal{K}^{o},\mbox{\, }u>-1, $ cf. \eqref{eq3}. Moreover, 
we must choose only those phase curves yielding even, functions of period 
 $ 2\lambda /\ell , $ where  $ \ell  $  is a positive integer. The first integral 
\eqref{eq39} implies that a non-constant, positive solution  $ H $ of \eqref{eq37} of period 
 $ 2\lambda /\ell  $  has either a maximum at  $ y=0 $  and a minimum at  $ y=\lambda 
/\ell , $  or vice-versa. In addition,  $ H $ is strictly monotone on  $ (0,\lambda 
/\ell ) $ and possesses reflection symmetry at the maximum and minimum 
locations, cf.  \cite{synge}. In view of \eqref{eq3}, these same qualitative properties hold 
for nontrivial solutions of \eqref{eq33} restricted to  $ (0,\infty )\times 
\mathcal{K}^{o}$.  

\begin{prop}\label{prop4.2}\textit{For any nontrivial solution pair } $ (\lambda ,u)\in \mathcal{C}_{n} \cap (0,\infty 
)\times \mathcal{K}^{o},n=1,2,\ldots , $ \textit{it follows that } $ u $ \textit{can be associated with a } $ C^{2}, $ \textit{ even, 2-periodic function. The resulting extension has minimal period } $ 2/n, $ \textit{ and either has a maximum at} $ s=0 $ \textit{and a minimum at } $ s=1/n $ \textit{ or vice-versa. Moreover, the extension possesses reflection symmetry about the maximum and minimum locations, and is strictly monotone on } $ (0,1/n)$.  
\end{prop}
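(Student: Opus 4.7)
The plan is to combine the $C^2$ regularity and Neumann data from Proposition \ref{prop2.2} with the phase-portrait picture of Figure \ref{Fig6}, and then use the nodal count inherited from membership in $\mathcal{C}_n$ (Proposition \ref{prop3.2}) to pin down the minimal period. First I would invoke Proposition \ref{prop2.2}: since $(\lambda,u)\in\mathcal{C}_n\cap(0,\infty)\times\mathcal{K}^o$, the function $u\in C^2[0,1]$ satisfies \eqref{eq12}, in particular $u'(0)=u'(1)=0$. Define the even reflection $\tilde u(s):=u(|s|)$ on $[-1,1]$; the condition $u'(0)=0$ guarantees $\tilde u\in C^2[-1,1]$, and the condition $\tilde u'(\pm1)=0$ then allows us to extend $\tilde u$ to a $2$-periodic function on $\mathbb{R}$ of class $C^2$. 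Under the change of variables \eqref{eq3}, this corresponds to an even, $2\lambda$-periodic $C^2$ function $H$ satisfying \eqref{eq37}$_1$ identically on $\mathbb{R}$, with the constant $\varpi$ given by the integral in \eqref{eq37}$_1$.

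Next I would apply the phase-plane analysis already developed in Remark \ref{rem4.1} and the discussion preceding Proposition \ref{prop4.2}. Since $u\in\mathcal{K}^o$ is nontrivial, the corresponding orbit in the $(H,H')$-plane is a nonconstant closed curve around the center $\alpha=1/\lambda$ (Figure \ref{Fig6}), so $H$ is strictly periodic with some minimal period $T>0$, oscillating monotonically between two turning points on the $H$-axis and possessing reflection symmetry across both of them (standard for a Hamiltonian flow with first integral \eqref{eq39}). Because the extended $H$ has period $2\lambda$ as well, we must have $2\lambda=kT$ for some positive integer $k$, i.e.\ $T=2\lambda/k$; back in $s$-variables this gives $u$ minimal period $2/k$, with extrema at $s=0$ and $s=1/k$ of opposite type, reflection symmetry about each extremum, and strict monotonicity on $(0,1/k)$.

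The last and main step is to identify $k=n$ using the nodal information. By Proposition \ref{prop3.2}, $u\in\mathcal{O}_n$, so $u$ has exactly $n$ simple zeros in $(0,1)$ with $u(0),u(1)\neq 0$. In $H$-variables a zero of $u$ is a point where the orbit crosses the horizontal line $H=1/\lambda=\alpha$, i.e.\ where the trajectory passes through the center's abscissa; the closed orbit crosses this line exactly twice per minimal period $T$, transversally (since the center is not on the orbit). Over the interval of length $\lambda$ this produces exactly $k$ transversal crossings, and these crossings lie strictly between extrema of $H$, hence in the open interval $(0,1)$ of $s$. The endpoints $s=0,1$ correspond to turning points of $H$, where $H\neq\alpha$, so $u(0),u(1)\neq 0$ automatically. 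Matching the two counts forces $k=n$, and the remaining qualitative assertions (symmetry, monotonicity, location of max/min) are inherited from the closed-orbit structure.

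The main obstacle I anticipate is not the symmetry or the periodic extension, both of which follow cleanly from the Neumann data and the Hamiltonian structure, but rather the bookkeeping in the last step: verifying that the nodal count of $u$ in $(0,1)$ coincides exactly with the number $k$ of half-periods fitting in $[0,\lambda]$, and in particular ruling out any degenerate crossings. This requires that the center $\alpha=1/\lambda$ lies strictly interior to the orbit (automatic since $u$ is nonconstant), that the zeros are transversal (equivalent to simplicity, guaranteed by $\mathcal{O}_n$), and that no zero is absorbed into the boundary $s\in\{0,1\}$ (ruled out because $s=0,1$ are extrema of $H$ and extrema satisfy $H\neq\alpha$ on a nontrivial orbit).
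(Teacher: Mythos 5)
Your proposal is correct and takes essentially the same route as the paper's own proof: derive periodicity, reflection symmetry, and monotonicity from the phase-plane/first-integral structure (as in the discussion preceding the proposition), and then pin down $\ell=n$ using the nodal characterization from Proposition \ref{prop3.2}. The only cosmetic difference is in how the count of one zero per half-period is justified---you observe that the center $\alpha=1/\lambda$ lies strictly inside the closed orbit so that $H$ crosses the line $H=\alpha$ exactly twice per period, whereas the paper invokes the integral constraint $\int_0^1 u\,ds=0$ to force $u(0)$ and $u(1/\ell)$ to have opposite signs---but these observations yield the same count and are interchangeable here.
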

\bpr  Given the equivalence of systems \eqref{eq21} and \eqref{eq37}, we see 
that  $ u $  possesses each of the appropriately scaled properties deduced 
above. So the only issue to address is the fact that  $ \ell =n$.  We argue by 
contradiction: Suppose that  $ \ell \ne n$.  Now \eqref{eq5}$_{1} $  insures that 
 $ u(0) $ and  $ u(1/\ell ) $ have opposite signs. Thus,  $ u $  has a single zero on 
 $ \left( {0,1/\ell } \right), $ leading to precisely  $ \ell  $  zeros on  $ (0,1)$.  
But this contradicts \eqref{eq36} unless  $ \ell =n$. \epr

Recall that the signature of fracture is  $ H=0 $ at some point or points in 
 $ [0,\lambda ]$.  Referring again to \eqref{eq39} and the phase portrait in Fig.~
5(b), we see that this corresponds to a closed phase curve containing the 
origin. As in the previous section, the sub-branch  $ \mathcal{C}_{n}^{o}  $  is 
defined as the component of  $ \mathcal{C}_{n} \cap (0,\infty )\times 
\mathcal{K}^{o} $  that contains the bifurcation point  $ (\lambda_{n} 
,0),n=1,2,\ldots  $  

\begin{thm}\label{thm4.3}\textit{Each} \textit{of the sub-branches, } $ \mathcal{C}_{n}^{o} ,n=1,2,\ldots , $ is bounded 
in  $ (0,\infty )\times \mathcal{H}$. \end{thm}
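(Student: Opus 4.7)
I argue by contradiction. Suppose $\mathcal{C}_n^o$ is unbounded in $(0,\infty)\times\mathcal{H}$. By Theorem~\ref{thm2.6}, $\|u\|_{C^1}\le C_{\lambda,\varepsilon}$ with $C_{\lambda,\varepsilon}$ bounded on each bounded $\lambda$-interval, so unboundedness forces a sequence $(\lambda_k,u_k)\in\mathcal{C}_n^o$ with $\lambda_k\to\infty$. The idea is to combine the integral constraint---which makes the mean of the associated inverse stretch $H_k$ tend to zero---with the phase-plane structure of \eqref{eq37}---which forces that mean to stay bounded away from zero whenever the period becomes large.

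Using \eqref{eq3}, each $H_k$ on $[0,\lambda_k]$ solves \eqref{eq37} for some multiplier $\varpi_k$, is strictly positive (since $u_k\in\mathcal{K}^o$), and by Proposition~\ref{prop4.2} is periodic with period $T_k=2\lambda_k/n\to\infty$. The constraint \eqref{eq2} yields $\bar H_k:=\lambda_k^{-1}\int_0^{\lambda_k}H_k\,dy=1/\lambda_k\to 0$. By Corollary~\ref{cor2.7} and continuity of $\dot W^{\ast}$ on $[0,M]$, $\varpi_k$ is uniformly bounded; passing to a subsequence, $\varpi_k\to\varpi_\ast\in[\dot W^\ast(\kappa),\gamma]$. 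Since the saddle $\beta_k$ is the larger root of $\dot W^\ast(\beta)=\varpi_k$ with $\beta_k\ge\kappa$ (cf.\ Fig.~\ref{Fig5a}), we have $\beta_k\to\beta_\ast\in[\kappa,M]$; in particular $\beta_\ast\ge\kappa>0$.

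The core is the following phase-plane dichotomy. Using the first integral \eqref{eq39} with $V_k(H):=W^{\ast}(H)-\varpi_k H$, the half-period and the time-mean of the orbit are
\[
\frac{\lambda_k}{n}=\sqrt{\tfrac{\varepsilon}{2}}\int_{H_{m,k}}^{H_{M,k}}\frac{dH}{\sqrt{V_k(H)-V_k(H_{m,k})}},\quad
\bar H_k=\frac{\int_{H_{m,k}}^{H_{M,k}}H\,dH/\sqrt{V_k(H)-V_k(H_{m,k})}}{\int_{H_{m,k}}^{H_{M,k}}dH/\sqrt{V_k(H)-V_k(H_{m,k})}}.
\]
At the turning point $H_{m,k}>0$ the singularity is the integrable $1/\sqrt{H-H_{m,k}}$ type (since $V_k'(H_{m,k})\ne 0$), whereas near the saddle one has $V_k(H)-V_k(\beta_k)\sim\tfrac12\ddot W^\ast(\beta_k)(H-\beta_k)^2$, producing a non-integrable logarithmic divergence of the period integral. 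Hence $T_k\to\infty$ is only possible through $H_{M,k}\to\beta_k$, i.e.\ orbit-to-homoclinic convergence. Applying the same concentration argument to the numerator of $\bar H_k$---the extra factor $H$ being essentially constant $\approx\beta_k$ in the logarithmically dominant region, while $\int(H-\beta_k)/\sqrt{V_k-V_k(H_{m,k})}\,dH$ stays bounded---gives $\bar H_k\to\beta_k$. Passing to the limit: $0=\lim_k\bar H_k=\beta_\ast\ge\kappa>0$, a contradiction.

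The main obstacle is this phase-plane step, in particular verifying that the orbit-to-saddle approach is the \emph{only} mechanism producing $T_k\to\infty$ on $\mathcal{C}_n^o$: the competing limit $H_{m,k}\to 0$ gives only an integrable square-root singularity (so $T_k$ stays bounded there), and moreover it corresponds to leaving $\mathcal{K}^o$, hence leaving $\mathcal{C}_n^o$. One must also confirm that the time-average concentration is uniform along the subsequence as $\varpi_k$ varies; the potentially degenerate limit $\varpi_\ast=\dot W^\ast(\kappa)$, in which center and saddle coalesce at $\kappa$, is harmless because then $\beta_\ast=\kappa>0$ and the final inequality still furnishes the contradiction.
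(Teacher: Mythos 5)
Your argument is correct in its essentials and takes a genuinely different route from the paper's proof, even though both exploit the same underlying tension: the minimal period $T_k=2\lambda_k/n$ of a solution on $\mathcal{C}_n^o$ must grow unboundedly while the orbit is constrained to live in $\{H>0\}$ with $\int_0^{\lambda_k}H_k\,dy=1$.

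The paper's proof runs the implication in the opposite direction. Using \eqref{eq40} it asserts that the \emph{center} $\alpha_j$ of the orbit equals $1/\lambda_j$, so $\alpha_j\searrow 0$, $\varpi_j=\dot W^\ast(\alpha_j)\nearrow\gamma$, the turning level $\Gamma_j\searrow 0$, and the orbit collapses to the origin; a small orbit near a nondegenerate center has period close to the linearized one, which is bounded, contradicting $T_j\to\infty$. Your proof instead keeps track of the \emph{time-average} $\bar H_k=1/\lambda_k$, which is the quantity the integral constraint \eqref{eq2}$_1$ actually controls (for a nonconstant orbit the center $\alpha$ and the time-average $\bar H$ need not coincide, since $\dot W^\ast(\alpha)=\varpi=\overline{\dot W^\ast(H)}\ne\dot W^\ast(\bar H)$ in general), uses the period integral to show that $T_k\to\infty$ forces a homoclinic approach $H_{M,k}\to\beta_k$, and then lets the logarithmic concentration of time near the saddle push $\bar H_k\to\beta_\ast\ge\kappa>0$, contradicting $\bar H_k\to 0$. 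In short: the paper shows the orbits must shrink and hence have bounded period; you show the large-period orbits must have $\bar H$ near the saddle, hence bounded away from zero. Both reach the contradiction, but your route works directly with the quantity $1/\lambda_k$ that the constraint actually pins down, which makes the step ``$\lambda_k\to\infty\Rightarrow$ something small'' cleaner. The price is the extra bookkeeping in the period and time-average integrals; your closing paragraph correctly flags the two places where this bookkeeping needs care (ruling out competing mechanisms for $T_k\to\infty$, and the degenerate limit $\varpi_\ast=\dot W^\ast(\kappa)$, where $\alpha_k,\beta_k\to\kappa$ together and the linearized frequency at the center degenerates; in that case the orbit is squeezed to $\kappa$, so $\bar H_k\to\kappa>0$ directly). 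Also note a minor precision: it is not only $H_{M,k}\to\beta_k$ that can drive $T_k\to\infty$---the degeneracy $\ddot W^\ast(\alpha_k)\to 0$ can as well---but as you observe this falls into the degenerate case and still yields the contradiction.
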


\bpr  We argue by contradiction; suppose that 
 $ \mathcal{C}_{n}^{o}  $  is unbounded. The bound \eqref{eq22}$_{3} $  then implies 
there is a sequence  $ \left\{ {(\lambda_{j} ,u_{j} )} \right\}\subset 
\mathcal{C}_{n}^{o}  $  with  $ \lambda_{j} \to \infty $.  By \eqref{eq3} we then have 
a sequence of solutions  $ \left\{ {(\lambda_{j} ,H_{j} )} \right\} $  of \eqref{eq37} 
on  $ (a_{\ast } ,b_{\ast } )=(0,\lambda_{j} ) $  such that
\begin{equation}
\label{eq41}
H_{j} (y)=[1+u_{j} (y/\lambda_{j} )]/\lambda_{j} >0\mbox{\, on\, 
}[0,\lambda_{j} ],
\end{equation}
with the sequence of centers \eqref{eq40} satisfying
\begin{equation}
\label{eq42}
\alpha_{j} =1/\lambda_{j} \searrow 0.
\end{equation}
Since  $ H_{j}  $  is positive, \eqref{eq42} implies that the amplitude of the 
``oscillation'' must also approach zero. Indeed, set  $ 0<\delta_{j} :=
\min 
\limits_{y\in [0,\lambda ]} H^{j}(y)<\alpha_{j} =1/\lambda_{j} $.  The 
phase curve \eqref{eq39} containing the point  $ (0,\delta_{j} ) $ is then given by
\begin{equation}
\label{eq43}
\frac{\varepsilon }{2}({H}')^{2}+\varpi_{j} H-W^{\ast }(H)=\Gamma_{j} ,
\end{equation}
where  $ \Gamma_{j} =\varpi_{j} \delta_{j} -W^{\ast }(\delta_{j} ) $  and 
 $ \varpi_{j} =\Dot{{W}}^{\ast }(\alpha_{j} ), $ cf. \eqref{eq38} and Fig.~\ref{Fig5a}. 
From \eqref{eq38} and \eqref{eq42}, we have  $ \varpi_{j} =\Dot{{W}}^{\ast }(\alpha_{j} 
)\nearrow \Dot{{W}}^{\ast }(0)=\gamma  $  and  $ W^{\ast }(\delta_{j} )\to 
W^{\ast }(0)=0$.  Thus,  $ \Gamma_{j} \searrow 0 $ as  $ j\to \infty $.  But \eqref{eq39} 
with  $ \varpi =\gamma  $  and  $ \Gamma =0 $  gives only the ``equilibrium'' 
 $ H\equiv 0$.  Thus, for all sufficiently large  $ j, $ \eqref{eq43} yields a small 
closed orbit indicating a bounded minimal period. But this contradicts the 
previous observation (above Proposition 4.2) that the minimal period of the 
solution  $ H_{j}  $  is given by  $ T_{j} =2\lambda_{j} /n\mbox{\, }\to \infty  $  
as  $ j\to \infty $. \epr  

Proposition 3.2 and Theorem 4.3 immediately imply  $ \mathcal{C}_{n}^{o} \ne 
\mathcal{C}_{n} ,n=1,2,\ldots , $ and we also deduce

\begin{cor}\label{cor4.4}\textit{The closure of each sub-branch contains a fractured solution, viz., there is } $ (\lambda_{n}^{\ast } ,u_{n}^{\ast } )\in \overline 
{\mathcal{C}_{n}^{o} }  $ \textit{ with } $ u_{n}^{\ast } \in \partial \mathcal{K} $ \textit{ and } $ \lambda 
_{n}^{\ast } >1, $ \textit{ for } $ n=1,2,\ldots $. \end{cor}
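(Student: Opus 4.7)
\bpr The strategy exploits the tension between the unboundedness of $\mathcal{C}_n$ (Proposition \ref{prop3.2}) and the boundedness of its sub-branch $\mathcal{C}_n^o$ (Theorem \ref{thm4.3}). Since $\mathcal{C}_n^o\subsetneq\mathcal{C}_n$ while $\mathcal{C}_n$ is connected, the closure of $\mathcal{C}_n^o$ in $(0,\infty)\times\mathcal{H}$ must contain a point outside the open set $(0,\infty)\times\mathcal{K}^o$, and I intend to identify that point as a fractured solution.

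I would argue by contradiction: suppose $\overline{\mathcal{C}_n^o}$ avoids $\mathbb{R}\times\partial\mathcal{K}$. Theorems \ref{thm4.3} and \ref{thm2.6} bound respectively the $\mathcal{H}$- and $\lambda$-coordinates of $\mathcal{C}_n^o$, so $\overline{\mathcal{C}_n^o}$ is a bounded subset of $(0,\infty)\times\mathcal{K}^o$. The compactness of $F$ in \eqref{eq33} implies that $\overline{\mathcal{C}_n^o}$ is compact and consists of solutions. Using the compactness of $F$ again, one can enclose $\overline{\mathcal{C}_n^o}$ in a bounded open neighborhood $\mathcal{O}$ with $\overline{\mathcal{O}}\subset(0,\infty)\times\mathcal{K}^o$, arranged so that $\partial\mathcal{O}$ contains no solutions from $\mathcal{C}_n$---a standard device of global bifurcation theory, available here in the variational-inequality setting of \cite{globi,rabin}. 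Then $\mathcal{C}_n\cap\overline{\mathcal{O}}$ is clopen in $\mathcal{C}_n$, and connectedness forces $\mathcal{C}_n\subset\overline{\mathcal{O}}$, contradicting unboundedness.

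Thus there exists $(\lambda_n^*,u_n^*)\in\overline{\mathcal{C}_n^o}$ with $u_n^*\in\partial\mathcal{K}$, equivalently $u_n^*(s_0)=-1$ for some $s_0\in[0,1]$; in the original variables this is $H(y_0)=0$, the hallmark of fracture. Since $u_n^*\not\equiv 0$, the pair is nontrivial, and the corresponding inverse stretch $H$ is non-negative, so Remark \ref{rem4.1} (via the phase-plane picture of Fig.~\ref{Fig6}) delivers the strict bound $\lambda_n^*>1$. The main technical obstacle is the construction of the insulating neighborhood $\mathcal{O}$ whose boundary avoids solutions of $\mathcal{C}_n$; once this topological step is set up, the rest follows directly from the a priori structure already assembled.\epr
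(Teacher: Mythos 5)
Your proposal is correct, and it takes a genuinely different, more topologically explicit route than the paper. The paper's argument is a short sequence argument: since Proposition \ref{prop3.2} makes $\mathcal{C}_n$ unbounded while Theorem \ref{thm4.3} bounds $\mathcal{C}_n^o$, one has $\mathcal{C}_n^o\subsetneq\mathcal{C}_n$; then, appealing to connectedness of $\mathcal{C}_n$, the authors simply \emph{assert} the existence of sequences in $\mathcal{C}_n^o$ and in $\mathcal{C}_n\setminus\mathcal{C}_n^o$ converging to a common limit $(\lambda_n^*,u_n^*)\in\overline{\mathcal{C}_n^o}\cap(\mathcal{C}_n\setminus\mathcal{C}_n^o)$, from which $u_n^*\in\partial\mathcal{K}$ and $\lambda_n^*>1$ via Remark \ref{rem4.1}. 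You instead run the argument by contradiction: assuming $\overline{\mathcal{C}_n^o}\subset(0,\infty)\times\mathcal{K}^o$, you make the closure compact via compactness of $F$, invoke the Whyburn-type ``insulating neighborhood'' construction standard in Rabinowitz-style global bifurcation, and conclude that $\mathcal{C}_n$ would then be clopen in a bounded region, contradicting unboundedness. This is more machinery than the paper uses, but it actually makes rigorous the very step the paper leaves implicit --- namely, why the common limit point must land in $\mathcal{C}_n\setminus\mathcal{C}_n^o$ rather than in $\mathcal{C}_n^o$, which a priori requires some local structure of the solution set or the Whyburn lemma to rule out. One small inaccuracy: Theorem \ref{thm4.3} already bounds $\mathcal{C}_n^o$ in the product $(0,\infty)\times\mathcal{H}$, so you do not need Theorem \ref{thm2.6} separately for the $\lambda$-bound (that theorem underlies the proof of Theorem \ref{thm4.3} but is not needed again here). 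Also, to complete the ``insulating neighborhood'' step you implicitly need the observation that any component of the compact set $\mathcal{C}_n\cap\overline{\mathcal{U}}$ meeting $\overline{\mathcal{C}_n^o}$ lies entirely in $\mathcal{C}_n^o$ (hence inside $\mathcal{U}$), which follows since such a component is connected, lies in $\mathcal{C}_n\cap((0,\infty)\times\mathcal{K}^o)$, and meets the maximal connected subset $\mathcal{C}_n^o$; you gesture at this but it deserves a sentence.
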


\bpr  Consider sequences  $ \left\{ {(\lambda_{j} ,u_{j} )} 
\right\}\subset \mathcal{C}_{n}^{o}  $  and  $ \left\{ {(\Tilde{{\lambda }}_{j} 
,\Tilde{{u}}_{j} )} \right\}\subset \mathcal{C}_{n} \backslash 
\mathcal{C}_{n}^{o} , $ each converging to  $ (\lambda_{n}^{\ast } ,u_{n}^{\ast 
} )\in \mathcal{C}_{n} $.  Then $ (\lambda_{n}^{\ast } ,u_{n}^{\ast } )\in 
\overline {\mathcal{C}_{n}^{0} } \cap (\mathcal{C}_{n} \backslash 
\mathcal{C}_{n}^{0} )$.  The first observation made in Remarks 4.1 shows 
that $ \lambda_{n}^{\ast } >1$. \epr   

For each  $ (\lambda_{n}^{\ast } ,u_{n}^{\ast } )\in \overline 
{\mathcal{C}_{n}^{o} }  $  as above, it is clear that  $ u_{n}^{\ast }  $  is also 
characterized as in Proposition 4.2, with a minimum value of  $ -1$.  Let 
 $ H_{n}^{\ast }  $  denote its rescaling via \eqref{eq3}, which corresponds to a 
closed phase curve containing the origin in Fig.~\ref{Fig6b}. In view of 
Proposition 3.2,  $ H_{n}^{\ast }  $  can be associated with a  $ C^{2} $  even 
function of period  $ 2\lambda_{n}^{\ast } /n, $  having either a maximum at 
 $ y=0 $  and a minimum at  $ y=\lambda_{n}^{\ast } /n $  or vice-versa. In 
addition,  $ H_{n}^{\ast }  $  is strictly monotone on  $ (0,\lambda_{n}^{\ast } 
/n) $  and possesses reflection symmetry at the maximum and minimum locations. 
Observe that fracture  $ (H_{n}^{\ast } =0) $  occurs only at a finite number of 
points in  $ [0,\lambda_{n}^{\ast } ]$.  

We claim that all other fractured or broken solutions for  $ \lambda >\lambda 
_{n}^{\ast } , $  denoted  $ H_{n,\lambda } , $  can be constructed from 
 $ H_{n}^{\ast }  $  via a ``cut and paste'' procedure:  $ H_{n,\lambda } \equiv 
0 $  is inserted on a set of measure  $ \lambda -\lambda_{n}^{\ast }  $  at 
locations where  $ H_{n}^{\ast }  $  possesses zeros, while  $ H_{n,\lambda } 
 $ maintains the values of  $ H_{n}^{\ast } , $  but now translated on  $ n $  
sub-intervals intervals, each of length  $ \lambda_{n}^{\ast } /n$.  There are 
only two such possibilities when  $ n=1 $. This follows from the fact that 
 $ H_{1}^{\ast }  $  has only one zero on  $ [0,\lambda_{\ast } ], $ which, due to 
monotonicity, occurs at one of the two ends of the interval. For example, if 
 $ H_{1}^{\ast } (\lambda_{1}^{\ast } )=0, $ then
\begin{equation}
\label{eq44}
H_{1,\lambda } (y)=\begin{cases}
 H_{1}^{\ast } (y), & y\in [0,\lambda_{1}^{\ast} ], 
\\
0, & y\in (\lambda_{1}^{\ast} ,\lambda].
\end{cases} 
\end{equation}

For each  $ n \ge  2, $  there are multiple locations where open sets of 
total measure  $ \lambda -\lambda_{n}^{\ast }  $  can be inserted. Hence, there 
are uncountably many, measure-theoretic equivalent ways to carry out the 
latter construction. We now show that the above construction is rigorous. 

\begin{thm}\label{thm4.5}\textit{If } $ (\lambda ,u)\in \mathcal{C}_{n} \backslash \overline 
{\mathcal{C}_{n}^{o} } ,n=1,2,\ldots , $ \textit{ then there are precisely } $ n $ \textit{ open sub-intervals, each of length } $ \lambda_{n}^{\ast } /n\lambda , \lambda 
>\lambda_{n}^{\ast } , $ \textit{ over which } $ u $ \textit{ is monotone with } $ u>-1 $ \textit{, and } $ u\equiv -1 $ \textit{on the complementary set of measure } $ 1-(\lambda_{n}^{\ast } 
/\lambda ), $ \textit{where } $ (\lambda_{n}^{\ast } ,u_{n}^{\ast } )\in \overline 
{\mathcal{C}_{n}^{o} }  $ \textit{ is defined in Corollary \ref{cor4.4}. Moreover, if the broken part of the solution, viz., } $ u\equiv -1, $ \textit{ is excised, then } $ u $ \textit{can be associated with a } $ C^{2} $ \textit{ even, } $ 2/n $ \textit{-periodic function, viz., with } $ u_{n}^{\ast } .  $ \end{thm}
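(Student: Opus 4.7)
The plan is as follows. Given $(\lambda, u) \in \mathcal{C}_n \setminus \overline{\mathcal{C}_n^o}$, since $\mathcal{C}_n^o \subset (0,\infty) \times \mathcal{K}^o$ we have $u \in \partial \mathcal{K}$, i.e., $\mathcal{B}_u \neq \emptyset$. I pass to the inverse stretch $H$ via \eqref{eq3}; Theorem~\ref{thm2.3} and its proof yield $u \in C^1[0,1]$ and a single Lagrange multiplier $\varpi$ governing the ODE \eqref{eq37} on every connected component of $\mathcal{G}_u$. The $C^1$ regularity combined with $u \equiv -1$ on $\mathcal{B}_u$ forces $H = 0$ and $H' = 0$ at every interior boundary point of $\mathcal{G}_u$; substituted into the first integral \eqref{eq39}, this selects $\Gamma = 0$ and hence the unique closed orbit in Fig.~\ref{Fig6b} that passes through the origin, tracing a \emph{bump} $H: 0 \to H_{\max}(\varpi) \to 0$ of width $T(\varpi)$. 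For an end component (one containing $y = 0$ or $y = \lambda$), the condition $H' = 0$ at the boundary endpoint together with the requirement that the orbit subsequently reach $H = 0$ (else $\mathcal{B}_u = \emptyset$, contradicting $u \notin \mathcal{K}^o$) rules out closed orbits with $\Gamma > 0$ and forces the boundary value to be either $0$ (a full bump anchored at the boundary) or $H_{\max}$ (a half-bump).

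Next I decompose each bump into its two monotone arcs $0 \to H_{\max}$ and $H_{\max} \to 0$, so $\mathcal{G}_u$ splits into open monotone sub-intervals of common width $\tau(\varpi) := T(\varpi)/2$. On each, $u$ varies monotonically between $-1$ and $u_{\max} = \lambda H_{\max} - 1 > 0$ (the last inequality holding since $H_{\max} > \alpha = 1/\lambda$, the center of the phase portrait), crossing zero exactly once. By Proposition~\ref{prop3.2}, $u$ has precisely $n$ simple zeros in $(0,1)$, and these all lie in $\mathcal{G}_u$ since $u \equiv -1$ on $\mathcal{B}_u$; consequently, the number of monotone sub-intervals is exactly $n$.

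To pin down $\varpi$, the integral constraint $\int_0^\lambda H\,dy = 1$ becomes $n\,J(\varpi) = 1$, where $J(\varpi) := \int_0^{\tau(\varpi)} H\,dy$ is the common half-bump integral along the $\Gamma=0$ orbit. Applying the same decomposition at $(\lambda_n^*, u_n^*)$---whose broken set consists of isolated points, by the description preceding the theorem---yields $n$ half-bumps of width $\lambda_n^*/n$ filling $[0,\lambda_n^*]$ and the identical equation $n\,J(\varpi_n^*) = 1$. Strict monotonicity of $J$ in $\varpi$ on the $\Gamma = 0$ orbit family (checked from \eqref{eq39} by computing $dJ/d\varpi$) then forces $\varpi = \varpi_n^*$, so $\tau(\varpi) = \lambda_n^*/n$. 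Thus the $n$ sub-intervals each have length $\lambda_n^*/(n\lambda)$ in $s$-coordinates, with complementary broken set of measure $1 - \lambda_n^*/\lambda$ on which $u \equiv -1$. Since each sub-interval solves the same ODE with matching conditions $H = 0$, $H' = 0$ at one endpoint, ODE uniqueness identifies it (up to translation and reflection) with a half-period of $H_n^*$, and juxtaposing the $n$ sub-intervals reproduces $u_n^*$. The hardest steps will be (i) the phase-plane argument ruling out end components on which $H$ stays strictly positive (requiring $\Gamma > 0$) and (ii) verifying the strict monotonicity of $J(\varpi)$ along the $\Gamma = 0$ family, without which $\varpi$ cannot be matched to $\varpi_n^*$.
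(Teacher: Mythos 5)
Your proof goes in the opposite direction from the paper's, and the two are genuinely complementary. The paper starts from the cut-and-paste construction $u_{n,\lambda}$ (inserting the $u\equiv -1$ plateau into $u_n^\ast$), verifies it satisfies the variational inequality \eqref{eq16} by splitting the test integral over $\mathcal{G}$ and $\mathcal{B}$ and using $\gamma > \varpi$ in \eqref{eq46}, and concludes membership in $\mathcal{C}_n$ by connectedness to $(\lambda_n^\ast, u_n^\ast)$. You instead start from an arbitrary $(\lambda,u)\in\mathcal{C}_n\setminus\overline{\mathcal{C}_n^o}$ and classify it via the phase plane: $C^1$ regularity from Theorem~\ref{thm2.3} forces $H=H'=0$ at the interior boundary of $\mathcal{G}_u$, hence $\Gamma=0$ in \eqref{eq39}, hence every component of $\mathcal{G}_u$ is an arc of the unique closed orbit through the origin; the nodal count from Proposition~\ref{prop3.2} then fixes the number of monotone arcs at exactly $n$. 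Arguably your route more directly proves the stated implication, whereas the paper's proof as written only shows the cut-and-paste family is \emph{contained} in $\mathcal{C}_n\setminus\overline{\mathcal{C}_n^o}$ and leaves the exhaustiveness implicit.

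That said, the gap you flag --- strict monotonicity of $J(\varpi)=\int_0^{T(\varpi)/2}H\,dy$ along the $\Gamma=0$ family --- is real and not supplied anywhere in the paper; without at least local injectivity of $J$ near $\varpi_n^\ast$ you cannot conclude $\varpi=\varpi_n^\ast$, which is the linchpin of the ``each of length $\lambda_n^\ast/n\lambda$'' clause. One way to close it without computing $dJ/d\varpi$ is to exploit connectedness of $\mathcal{C}_n$: $\varpi$ is a continuous functional of the solution (it equals $\mu/\lambda^3$ with $\mu$ the single Lagrange multiplier in \eqref{eq16}), the constraint $n\,J(\varpi)=1$ holds along the whole unbounded component, and $\varpi\to\varpi_n^\ast$ as the component accumulates at the fracture point; local injectivity of $J$ near $\varpi_n^\ast$ then suffices, and that is far easier to check than global monotonicity. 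A small imprecision: for an end component $[0,b)\subset\mathcal{G}_u$, the boundary value can only be $H_{\max}$, never $0$, since $H(0)=0$ would place $0\in\mathcal{B}_u$ and hence outside $\mathcal{G}_u$; your ``either $0$ or $H_{\max}$'' alternative has a vacuous first case. The rest of the decomposition, including the observation that every monotone arc has width $T(\varpi)/2$ regardless of whether it sits inside a full bump or an end half-bump, and that each crosses $u=0$ exactly once, is correct.
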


\bpr Consider the proposed construction of a broken solution as 
described above. Then  $ H_{n}^{\ast }  $  satisfies \eqref{eq37} on each of the  $ n $  
sub-intervals, of length  $ \lambda_{n}^{\ast } /n, $  contained in  $ [0,\lambda 
], $  with  $ H_{n,\lambda } \equiv 0 $  on the complement. Rescaling according to 
\eqref{eq3} then then yields
\[
u_{n,\lambda } (s):=\begin{cases}
\lambda \Tilde{{H}}_{n}^{\ast } (\lambda s)-1, & s\in 
\mathcal{G},  \\
-1, &s\in 
\mathcal{B},\\
\end{cases} 
\]
where  $ \left| \mathcal{B} \right|=1-(\lambda_{n}^{\ast } /\lambda ), $  
 $ \mathcal{G} $  is the union of  $ n $  intervals, each of length  $ \lambda 
_{n}^{\ast } /n\lambda , $  and  $ \Tilde{{H}}_{n}^{\ast }  $  represents the 
translated portions of  $ \lambda H_{n}^{\ast } (\lambda \cdot )-1 $  on 
 ${}{} \mathcal{G}_u$.  For example, the rescaling of \eqref{eq44} reads
\begin{equation}
\label{eq45}
u_{1,\lambda } (s)=\begin{cases}
\lambda H_{1}^{\ast } (\lambda s)-1, & s\in [0,\lambda 
_{1}^{\ast } /\lambda ],  \\
-1, & s\in (\lambda_{1}^{\ast } /\lambda ,1\mbox].\\
\end{cases} 
\end{equation}
Clearly  $ u_{n,\lambda } \in \mathcal{K} $  satisfies \eqref{eq21} on each of the  $ n $  
sub-intervals comprising  $\mathcal{G}, $  with  $ b-a=\lambda_{n}^{\ast } 
/n\lambda  $  and  $ \mu =\lambda^{3}\varpi $.  We claim that  $ u_{n,\lambda }  $  
satisfies \eqref{eq16}, which is equivalent to \eqref{eq9}: First decompose the left 
side of \eqref{eq16} evaluated at  $ u_{n,\lambda }  $ into the sum of two integrals 
over $ \mathcal{G} $  and  $ \mathcal{B}$.  The former vanishes by construction, 
and the integral over  $ \mathcal{B} $ leads directly to 
\begin{equation}
\label{eq46}
(\lambda^{3}\Dot{{W}}^{\ast }(0)-\mu )\int_\mathcal{B} \psi dx=\lambda 
^{3}(\gamma -\varpi )\int_\mathcal{B} \psi dx.
\end{equation}
From the development in Section 4, we have  $ \gamma >\varpi , $ cf. Fig.~\ref{Fig6}. 
Accordingly \eqref{eq46} is positive for all test functions  $ \psi \in 
H^{1}(0,1) $ with  $ \psi  \ge  0 $  on  $ \mathcal{B}, $ which proves the claim. 
Clearly  $ (\lambda ,u_{n,\lambda } ) $ is connected to  $ (\lambda_{n}^{\ast } 
,u_{n}^{\ast } ), $  and thus,  $ (\lambda ,u_{n,\lambda } )\in \mathcal{C}_{n} 
 $  for all  $ \lambda >\lambda_{n}^{\ast } $.  Finally, we can associate each 
such  $ u_{n,\lambda }  $ with  $ u_{n}^{\ast } (s)=\lambda H_{n}^{\ast } (\lambda 
s)-1 $  on  $ \mathbb{R}(\bmod 2/n) $  via excision.   \epr   

We finish this section with some stability/instability results. We adopt the 
usual energy criterion for stability, viz., an equilibrium solution is 
\textit{stable} if it renders the potential energy a minimum; it is \textit{locally stable} if it renders the 
potential energy a local minimum; if the potential energy is not a minimum 
(neither local not global), the equilibrium is unstable. Referring to \eqref{eq4}, 
it is not hard to show that  $ V_{\varepsilon } [\lambda ,\cdot ] $  is a  $ C^{2} $  
functional on  $ \mathcal{H}$.  Thus, we may rigorously employ the 
second-derivative test (second variation) to determine local minima or to 
demonstrate instability. For any solution of \eqref{eq9} or equivalently \eqref{eq33}, 
the second variation takes the form
\begin{equation}
\label{eq47}
\delta^{2}V_{\varepsilon } [\lambda ,u;\eta ]:=\frac{d^{2}}{d\tau 
^{2}}V_{\varepsilon } [\lambda ,u+\tau \eta ]\vert_{\tau =0} =\int_0^1 
{[\varepsilon ({\eta }')^{2}} +\lambda^{2}\Ddot{{W}}^{\ast }([1+u]/\lambda 
)\eta^{2}]ds,
\end{equation}
for all  $ \eta \in \mathcal{H}$.  We first consider the trivial solution.

\begin{prop}\label{prop4.6}\textit{The trivial solution}  $ u\equiv 0 $ \textbf{\textit{ 
}}\textit{is locally stable for all } $ \lambda \in (0,\lambda_{1} ) $ \textit{and unstable for all } $ \lambda \in (\lambda_{1} ,\infty ), $ \textit{ where } $ \lambda 
_{1}  $ \textit{ is the smallest root of \eqref{eq29}, i.e., } $ (\lambda_{1} ,0)\in (0,\infty )\times \mathcal{H} $ \textit{is the first bifurcation point, cf. Theorem \ref{thm3.1}. } 
\end{prop}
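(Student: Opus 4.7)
The plan is to apply the second-derivative test to the $C^2$ functional $V_\varepsilon[\lambda,\cdot]$ at the trivial critical point $u\equiv 0$. Since $0\in\mathcal{K}^o$, any perturbation $\tau\eta$ with $\eta\in\mathcal{H}$ lies in $\mathcal{K}$ for $|\tau|$ small, so the unilateral constraint is inactive near $u=0$ and local minimality on $\mathcal{K}$ coincides with local minimality on $\mathcal{H}$. Evaluating \eqref{eq47} at $u=0$ yields
\begin{equation*}
\delta^2 V_\varepsilon[\lambda,0;\eta]=\int_0^1\bigl[\varepsilon(\eta')^2+\lambda^2\ddot{W}^\ast(1/\lambda)\eta^2\bigr]\,ds.
\end{equation*}

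For the stability claim ($\lambda<\lambda_1$), the plan is to establish coercivity of this quadratic form on $\mathcal{H}$. If $\lambda\le 1/\kappa$, then $1/\lambda\ge\kappa$ and \eqref{eq-2} gives $\ddot{W}^\ast(1/\lambda)\ge 0$, so $\delta^2 V_\varepsilon[\lambda,0;\eta]\ge\varepsilon\|\eta\|_{\mathcal{H}}^2$. If instead $1/\kappa<\lambda<\lambda_1$, then $\ddot{W}^\ast(1/\lambda)<0$, and I would invoke the sharp Poincar\'e--Wirtinger inequality $\|\eta\|_{L^2}^2\le\pi^{-2}\|\eta'\|_{L^2}^2$, valid on $\mathcal{H}$ because $\int_0^1\eta\,ds=0$, to obtain
\begin{equation*}
\delta^2 V_\varepsilon[\lambda,0;\eta]\ge\tfrac{1}{\pi^2}\bigl[\varepsilon\pi^2+\lambda^2\ddot{W}^\ast(1/\lambda)\bigr]\|\eta\|_{\mathcal{H}}^2.
\end{equation*}
The bracket equals $\varepsilon\pi^2>0$ at $\lambda=1/\kappa$ (since $\ddot{W}^\ast(\kappa)=0$), vanishes at $\lambda=\lambda_1$ by the $n=1$ instance of \eqref{eq29}, and has no further zero in between by the simple-root structure described in Section 3; hence it is strictly positive throughout $(1/\kappa,\lambda_1)$. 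Combining the two sub-cases gives $\delta^2 V_\varepsilon[\lambda,0;\eta]\ge c(\lambda)\|\eta\|_{\mathcal{H}}^2$ with $c(\lambda)>0$; since $V_\varepsilon[\lambda,\cdot]$ is $C^2$, the mean-value form of Taylor's theorem, together with continuity of $D^2V_\varepsilon$ at $0$, yields strict local minimality of $u=0$, hence local stability.

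For the instability claim ($\lambda>\lambda_1$), my plan is to exhibit a single unstable direction. The function $\eta_1(s):=\cos(\pi s)$ is mean-zero and thus in $\mathcal{H}$, and a direct computation gives
\begin{equation*}
\delta^2 V_\varepsilon[\lambda,0;\eta_1]=\tfrac{1}{2}\bigl[\varepsilon\pi^2+\lambda^2\ddot{W}^\ast(1/\lambda)\bigr].
\end{equation*}
The bracket crosses zero transversally at $\lambda=\lambda_1$ (the same transversality used in the proof of Theorem~\ref{thm3.1}) and, by the structural description of \eqref{eq29} in Section 3, has no further zero on $(\lambda_1,\infty)$, since the higher $\lambda_k$ arise from modes $n\ge 2$ rather than from additional roots of the $n=1$ equation. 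Combined with $\lambda^2\ddot{W}^\ast(1/\lambda)\to-\infty$ as $\lambda\to\infty$, which is forced by the accumulation of roots $\lambda_n$ at infinity, the bracket is strictly negative throughout $(\lambda_1,\infty)$; therefore $\delta^2 V_\varepsilon[\lambda,0;\eta_1]<0$ there and $u=0$ fails to be a local minimizer, proving instability.

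The main obstacle is the global sign analysis of the bracket $\varepsilon\pi^2+\lambda^2\ddot{W}^\ast(1/\lambda)$ on all of $(\lambda_1,\infty)$: the coercivity bound and the Taylor argument for local minimality are routine, but pinning down the sign on an unbounded interval rests on the structural claim that $\lambda_1$ is the unique root of the $n=1$ case of \eqref{eq29}, an assertion that is implicit in the bifurcation picture of Section 3 rather than explicit in the hypotheses on $W^\ast$.
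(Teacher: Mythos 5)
Your proposal is correct and follows essentially the same path as the paper's proof: compute the second variation at $u\equiv0$, use the sharp Poincar\'e inequality on $\mathcal{H}$ to obtain a lower bound involving the bracket $\varepsilon\pi^2+\lambda^2\ddot W^\ast(1/\lambda)$, and use $\eta_1=\cos(\pi s)$ as a destabilizing direction for $\lambda>\lambda_1$. A few small refinements worth noting: your bound expresses coercivity in the $\mathcal{H}$-norm, $\delta^2V\ge\tfrac{1}{\pi^2}[\varepsilon\pi^2+\lambda^2\ddot W^\ast(1/\lambda)]\|\eta\|_{\mathcal{H}}^2$, which is what is actually needed to invoke Taylor's theorem for a strict local minimum; the paper's bound \eqref{eq50} is stated in terms of $\int\eta^2$, which gives only positive-definiteness and requires an extra (implicit) step to upgrade. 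Your two-case split at $\lambda=1/\kappa$ is not strictly necessary since the Poincar\'e estimate covers both, but it does no harm. The paper goes slightly further on the instability side, showing for $\lambda>\lambda_k$ that all $k$ test functions $\cos(\ell\pi s)$, $\ell=1,\ldots,k$, give negative second variation (which later feeds into Proposition \ref{prop4.7}), whereas you content yourself with $\eta_1$, which suffices for the stated proposition. Finally, you correctly flag that the global sign analysis of the bracket on $(\lambda_1,\infty)$ rests on the uniqueness of the root $\lambda_1$ of the $n=1$ case of \eqref{eq29}; the paper deduces this from Fig.~\ref{Fig5b} and the shape of $-\ddot W^\ast$ rather than from the formal hypotheses \eqref{eq-2}, and your proof uses the same structural input, so this is not a gap relative to the paper but an honest observation about the hypotheses.
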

\bpr  Evaluating \eqref{eq47} at  $ u\equiv 0 $ gives
\begin{equation}
\label{eq48}
\delta^{2}V_{\varepsilon } [\lambda ,0;\eta ]=\int_0^1 {[\varepsilon ({\eta 
}')^{2}} +\lambda^{2}\Ddot{{W}}^{\ast }(1/\lambda )\eta^{2}]ds,
\end{equation}
for all variations  $ \eta \in \mathcal{H}$.  Next we employ the sharp 
Poincar\'{e} inequality
\begin{equation}
\label{eq49}
\int_0^1 {({\eta }')^{2}} ds \ge  \pi^{2}\int_0^1 {\eta^{2}} ds,
\end{equation}
where  $ \pi^{2} $  is the first eigenvalue of the operator  $ -{\eta }'' $  on 
 $ [0,1], $  subject to conditions \eqref{eq27}$_{2,3} $. Then \eqref{eq48}, \eqref{eq49} lead to
\begin{equation}
\label{eq50}
\delta^{2}V_{\varepsilon } [\lambda ,0;\eta ] \ge  [\varepsilon \pi 
^{2}+\lambda^{2}\Ddot{{W}}^{\ast }(1/\lambda )]\int_0^1 {\eta^{2}} ds.
\end{equation}
In view of \eqref{eq29} with $ n=1, $ and from the graph of  $ -\Ddot{{W}}^{\ast }(\cdot 
), $ we conclude that  $ \varepsilon \pi^{2}/\lambda^{2}>-\Ddot{{W}}^{\ast 
}(1/\lambda ) $  for all  $ 1/\lambda >1/\lambda_{1} , $  cf. (1.4) and Fig.~\ref{Fig5b}. 
Of course this implies that \eqref{eq50} is positive for all  $ \lambda \in 
(0,\lambda_{1} ) $ and  $ \eta \in \mathcal{H}$. 

For any  $ \lambda >\lambda_{k,}  $  choose the admissible test functions  $ \eta 
_{\ell } :=\cos (\ell \pi s), $ for  $ \ell =1,2,\ldots ,k$.  Then
\[
\delta^{2}V_{\varepsilon } [\lambda ,0;\eta_{\ell } ]=[\varepsilon \ell 
^{2}\pi^{2}+\lambda^{2}\Ddot{{W}}^{\ast }(1/\lambda )]/2.
\]
Again, from \eqref{eq29} and the graph of  $ \Ddot{{W}}^{\ast }(\cdot ), $ we see that 
 $ \varepsilon \ell^{2}\pi^{2}/\lambda^{2}<-\Ddot{{W}}^{\ast }(1/\lambda 
), $  implying that  $ \delta^{2}V_{\varepsilon } [\lambda ,0;\eta_{\ell } 
]<0, $ for  $ \ell =1,2,\ldots ,k$. \epr  

Next, we borrow a construction from  \cite{cgs} to show that the ``higher-mode'' 
global solution branches are all unstable. 

\begin{prop}\label{prop4.7}For any  $ (\lambda ,u)\in \mathcal{C}_{n}$, $n=2,3,\ldots$,
the second variation \eqref{eq47} is strictly negative at some  $ \eta \in \mathcal{H}$,  i.e., $u$  is unstable. 
\end{prop}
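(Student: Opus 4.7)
The plan is to construct an explicit $\eta^* \in \mathcal{H}$ with $\delta^2 V_\varepsilon[\lambda,u;\eta^*] < 0$, built as a small first-order perturbation of $\eta_0 := u'\chi_{[0,2/n]}$. The function $\eta_0$ will give a vanishing second variation but will \emph{not} be an eigenfunction of the Hessian, so a suitable correction pushes the quadratic form strictly negative.

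Preliminaries. For $(\lambda,u) \in \mathcal{C}_n^o$ with $n\geq 2$, Proposition \ref{prop4.2} extends $u$ to a $C^2$ even function of minimal period $2/n$, with $u'(k/n) = 0$ and $u''(k/n)\neq 0$ for each integer $k$ (nondegeneracy from the transversal crossing of the $H$-axis by closed orbits in Fig.~\ref{Fig6b}); by periodicity and evenness, $u''(0) = u''(2/n)$. Differentiating \eqref{eq12} shows that $v = u'$ satisfies the linearized equation $-\varepsilon v'' + \lambda^2 \ddot W^*([1+u]/\lambda)\, v = 0$ on $(0,1)$.

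The explicit test function. With $\eta_0 := u'\chi_{[0,2/n]}$, the equalities $u'(0) = u'(2/n)=0$ and $u(2/n) = u(0)$ give $\eta_0 \in \mathcal{H}$. Integration by parts on $[0,2/n]$, using vanishing boundary terms and the linearized equation, yields $\delta^2 V_\varepsilon[\lambda,u;\eta_0] = 0$. Let $Q[\,\cdot\,,\,\cdot\,]$ denote the bilinear form polarizing $\delta^2 V_\varepsilon[\lambda,u;\,\cdot\,]$; a parallel integration by parts gives, for any $\psi\in\mathcal{H}$,
\[
Q[\eta_0,\psi] \;=\; \varepsilon\bigl[u''(2/n)\psi(2/n) - u''(0)\psi(0)\bigr] \;=\; \varepsilon\, u''(0)\bigl[\psi(2/n) - \psi(0)\bigr].
\]
Taking $\psi_0(s) := \cos(\pi s) \in \mathcal{H}$, one has $Q[\eta_0,\psi_0] = \varepsilon u''(0)(\cos(2\pi/n)-1) \ne 0$ for $n \geq 2$. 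Setting $\eta^* := \eta_0 + t\psi_0$ with $|t|$ small and $\mathrm{sign}(t) = -\mathrm{sign}(Q[\eta_0,\psi_0])$,
\[
\delta^2 V_\varepsilon[\lambda,u;\eta^*] \;=\; 2t\, Q[\eta_0,\psi_0] + t^2\,\delta^2 V_\varepsilon[\lambda,u;\psi_0] \;<\; 0,
\]
proving instability whenever $u\in \mathcal{K}^o$.

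Broken solutions and main obstacle. For $(\lambda,u) \in \mathcal{C}_n\setminus\overline{\mathcal{C}_n^o}$, Theorem \ref{thm4.5} exhibits $u$ as a gluing of translated copies of monotone pieces of $u_n^*$ separated by constant-$(-1)$ intervals on the broken set $\mathcal{B}_u$. I apply the same construction to two adjacent monotone pieces inside $\mathcal{G}_u$; the resulting $\eta_0$ vanishes identically on $\mathcal{B}_u$ and so is admissible for the variational inequality \eqref{eq16}. The perturbation $\psi_0$ must be replaced by a smooth bump supported strictly inside $\mathcal{G}_u$, of mean zero, with $\psi_0$ nonvanishing at exactly one of the two endpoints of the $\eta_0$-support so that the coupling $Q[\eta_0,\psi_0]\neq 0$ is preserved; then $\psi_0 \equiv 0 \ge 0$ on $\mathcal{B}_u$ holds trivially. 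The main obstacle I foresee is precisely this admissibility engineering in the constrained setting: the negative test variation $\eta^*$ must satisfy $\eta^*\ge 0$ on $\mathcal{B}_u$ to count as an admissible direction for \eqref{eq16}, so the perturbation $\psi_0$ must simultaneously realize non-negativity on $\mathcal{B}_u$, the required nonzero coupling to $\eta_0$, and the mean-zero condition defining $\mathcal{H}$.
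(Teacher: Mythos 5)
Your plan is exactly the strategy the paper uses (borrowed from Carr--Gurtin--Slemrod): take $\eta_0 = u'$ restricted to a sub-interval of length equal to a minimal period, observe that $u'$ solves the linearized equation \eqref{eq51} so that $\delta^2 V[\lambda,u;\eta_0]=0$, and then add a small boundary perturbation $\tau\psi$ with $\psi$ nonvanishing at an endpoint to make the $O(\tau)$ cross-term nonzero. For $u\in\mathcal{K}^o$, your argument is correct and complete; the only inessential difference from the paper is the choice of $\psi$: the paper uses $\psi$ with $\psi(0)=1$ and $\psi\equiv 0$ on $[2/n,1]$, eliminating the boundary term at $s=2/n$ entirely and giving $Q[\eta_0,\psi]=-\varepsilon u''(0)$; you use $\psi_0=\cos(\pi s)$ and invoke periodicity $u''(2/n)=u''(0)$ to combine both boundary terms into $\varepsilon u''(0)(\cos(2\pi/n)-1)$. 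Both work and the factor $\cos(2\pi/n)-1<0$ cleanly displays why $n\ge 2$ is needed.

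For broken solutions the sketch has two gaps. First, ``two adjacent monotone pieces inside $\mathcal{G}_u$'' forming an interval of length $2\lambda_n^*/n\lambda$ need not exist as a connected subset of $[0,1]$: for $n=2$, $H_2^*$ vanishes only at the single interior point $\lambda_2^*/2$, so the cut-and-paste forces $\mathcal{G}_u$ to consist of two \emph{separated} half-humps $[0,\lambda_2^*/2\lambda)$ and $(1-\lambda_2^*/2\lambda,1]$ with the broken set in the middle; there is no full hump. (The paper silently sidesteps this by invoking the even periodic extension to $(-1,1)$ and then ``assuming $a=0$ without loss of generality,'' a translation that is also not spelled out carefully.) Second, your requirement that $\psi_0$ be ``supported strictly inside $\mathcal{G}_u$'' yet ``nonvanishing at one of the two endpoints of the $\eta_0$-support'' is self-contradictory whenever that endpoint lies on $\partial\mathcal{G}_u\subset\mathcal{B}_u$, which is exactly where it must lie in the broken case: such a $\psi_0$ vanishes there and the coupling $Q[\eta_0,\psi_0]$ you need is zero. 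The admissibility obstacle you identify ($\eta^*\ge 0$ on $\mathcal{B}_u$) is real, but the correct resolution is not to force $\psi$ to vanish on $\mathcal{B}_u$; rather, note that if the endpoint $s_0$ lies in $\mathcal{B}_u$ then $u(s_0)=-1$ is a constrained minimum, so $u''(s_0)>0$, and the sign of $\tau$ that makes the cross-term negative is exactly $\tau>0$, which gives $\tau\psi(s_0)\ge 0$. So $\psi$ may be taken nonnegative at that endpoint (and vanishing on the rest of $\mathcal{B}_u$), and admissibility is automatic. The paper does not discuss admissibility at all; making this sign-compatibility observation explicit actually improves on the published argument, but as written your proposal's ``strictly inside $\mathcal{G}_u$'' requirement defeats the coupling and must be dropped.
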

\bpr In view of Proposition 4.6, we need only consider  $ u\ne 0$.  
Let  $ u $  be associated with an even,  $ (2\lambda_{\ast } /n\lambda 
)- $ periodic function: If  $ u\in \mathcal{K}^{o}, $ then  $ \lambda_{\ast } 
=\lambda , $  cf. Proposition 4.2. If  $ u\notin \mathcal{K}^{o}, $ then  $ \lambda 
_{\ast } =\lambda_{n}^{\ast } , $ cf. Theorem 4.5. In any case, for 
 $ n \ge  2 $ there is an interval  $ (a,b)\subset (-1,1), $ with  $ b-a=2\lambda 
_{\ast } /n\lambda , $  such that \eqref{eq21} is valid. Theorem 4.5 implies that 
 $ u $  is reflection-symmetric with respect to the midpoint of  $ (a,b)$.  Without 
loss of generality, we assume  $ a=0$.  Note that \eqref{eq21} implies (via 
bootstrap) that
\begin{equation}
\label{eq51}
\varepsilon {u}'''\equiv \lambda^{2}\Ddot{{W}}^{\ast }\left( {[1+u]/\lambda 
} \right){u}'.
\end{equation}
Next, define
\[
\varphi (s):=\begin{cases}
 u'(s), & s\in [0,2\lambda^{\ast }/n\lambda ],  \\
 0,&  \hbox{otherwise.}
 \end{cases} 
\]
By virtue of Theorem 4.5, we note that  $ {u}' $  can be associated with an odd, 
 $ (2\lambda_{\ast } /n\lambda )$-periodic function. Accordingly,  $ \int_0^1 
{\varphi ds=0}$, and thus  $ \varphi \in \mathcal{H}$.  Now define the 
admissible variation
\begin{equation}
\label{eq52}
\eta (s):=\phi (s)+\tau \psi (s),
\end{equation}
where  $ \psi  $  is any function in  $ \mathcal{H} $  such that  $ \psi (0)=1,\psi 
\equiv 0 $  on  $ [2\lambda_{\ast } /n\lambda ,1], $  and  $ \tau  $  is a small 
parameter. On substituting \eqref{eq52} into \eqref{eq47}, we obtain
\[
\begin{array}{l}
 \delta^{2}V_{\varepsilon } [\lambda ,u;\eta ]=\int_0^b {[\varepsilon 
({u}'')^{2}} +\lambda^{2}\Ddot{{W}}^{\ast }([1+u]/\lambda )({u}')^{2}]ds \\ 
 \mbox{\, \, \, \, \, \, \, \, \, \, \, \, \, \, \, \, \, \, \, \, \, \, \, 
\, \, }+2\tau \int_0^b {[\varepsilon {u}''{\psi }'+} \lambda 
^{2}\Ddot{{W}}^{\ast }([1+u]/\lambda ){u}'\psi ]ds+O(\tau^{2}), \\ 
 \end{array}
\]
where  $ b=2\lambda_{\ast } /n\lambda $.  Then employing \eqref{eq51}, we find
\[
\delta^{2}V_{\varepsilon } [\lambda ,u;\eta ]=-2\varepsilon \tau 
{u}''(0)+O(\tau^{2}).
\]
Finally, since  $ u $  is a non-constant solution, we have  $ {u}''(0)\ne 0$.  
Thus, \eqref{eq47} is negative for  $ \left| \tau \right| $  sufficiently small. 
  \epr  

\section{Effective Macroscopic Behavior}

In this section we interpret our results in terms of the conventional 
Lagrangian description, as discussed in Section 1. Taking the point of view 
of  \cite{lifsh}, our goal is to obtain the effective or macroscopic stress-stretch 
diagram based on the global solutions obtained. This is an alternative 
global bifurcation diagram. In view of Propositions 4.6 and 4.7, it is enough 
to consider only the trivial solution and the first global branch 
 $ \mathcal{C}_{1} $. 

Presuming  $ H>0, $  we first express \eqref{eq1}, \eqref{eq2}$_{1} $  in terms of the 
deformation gradient
\begin{equation}
\label{eq53}
F(x):={f}'(x)=1/H(f(x)).
\end{equation}
Recalling  $ y=f(x)\Leftrightarrow x=h(y), $  we likewise have
\begin{equation}
\label{eq54}
H(y)={h}'(y)=1/F(h(y)).
\end{equation}
By the chain rule and the change-of-variable formula, we find that
\begin{equation}
\label{eq55}
{H}'(f(x))=-{F}'(x)/(F(x))^{3},
\end{equation}
and the total energy becomes \cite{carlson}
\begin{equation}
\label{eq56}
\Tilde{{E}}_{\varepsilon } [F]=\int_0^1 \left[\frac{\varepsilon 
}{2}\frac{({F}')^{2}}{F^{5}}+W(F)\right]dx,
\end{equation}
subject to
\begin{equation}
\label{eq57}
\int_0^1 {Fdx=\lambda .} 
\end{equation}
The Euler-Lagrange equation for \eqref{eq56}, \eqref{eq57} is readily obtained:
\begin{equation}
\label{eq58}
-\varepsilon \left[ {\left( {\frac{{F}'}{F^{5}}} \right)^{\prime 
}+\frac{5}{2}\left( {\frac{{F}'}{F^{3}}} \right)^{2}} 
\right]+\Dot{{W}}(F)=\sigma \mbox{\, on\, (0,1).}
\end{equation}
where the multiplier  $ \sigma , $  enforcing \eqref{eq57}, represents the constant 
stress carried by the bar. Indeed, along the homogeneous 
solution  $ F\equiv \lambda  $  we obtain the first-gradient constitutive law
\begin{equation}
\label{eq59}
\sigma =\Dot{{W}}(\lambda ),
\end{equation}
whose graph, in view of (1.3) and Fig.~\ref{Fig2a}, is depicted (blue curve) in Fig.~\ref{Fig4}. Using 
\eqref{eq53}-\eqref{eq55} and the chain rule, it is not hard to see that \eqref{eq58} is 
equivalent to
\begin{equation}
\label{eq60}
\sigma =\varepsilon [{H}''H-({H}')^{2}/2]+W^{\ast }(H)-H\Dot{{W}}^{\ast 
}(H)\mbox{\, on\, (0,}\lambda \mbox{),}
\end{equation}
where we have also employed (1.1). From \eqref{eq37} we deduce 
\begin{equation}
\label{eq61}
\varepsilon {H}''H=H[\Dot{{W}}^{\ast }(H)-\varpi ],
\end{equation}
and this together with \eqref{eq60} shows that the constant of integration 
appearing in \eqref{eq39} is, in fact, the negative of the stress, 
viz.,  
\beq\label{gs}\sigma  =- \Gamma. \eeq 

\begin{thm}\label{thm5.1} If$ (\lambda ,u)\in \mathcal{C}_{1} \backslash 
\mathcal{C}_{1}^{o}$, then the stress $\si$ in \eqref{eq60} vanishes on that solution, i.e., the fractured bar carries no stress.\end{thm}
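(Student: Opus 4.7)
The plan is to exploit the first integral \eqref{eq39} together with the explicit structure of fractured solutions given in Theorem~\ref{thm4.5}. The key observation is that, once we know the inverse stretch $H$ is globally $C^{1}$ and has an interval on which it vanishes identically, evaluating the conserved quantity $\Gamma$ at the interface between the glued and broken regions forces $\Gamma=0$; the identification $\sigma=-\Gamma$ from \eqref{gs} then gives $\sigma=0$.

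First, since $(\lambda,u)\in\mathcal{C}_{1}\setminus\mathcal{C}_{1}^{o}$ with $\lambda>\lambda_{1}^{\ast}$, Theorem~\ref{thm4.5} and \eqref{eq44} yield (after rescaling via \eqref{eq3} back to the original variable $y\in[0,\lambda]$) a solution $H$ of \eqref{eq37} on the glued set of the form $H=H_{1}^{\ast}$ on a sub-interval of length $\lambda_{1}^{\ast}$, together with $H\equiv 0$ on the complementary broken interval of length $\lambda-\lambda_{1}^{\ast}$. Without loss of generality I take the broken interval to be $(\lambda_{1}^{\ast},\lambda]$, so the two regions meet at $y=\lambda_{1}^{\ast}$.

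Next, I invoke Theorem~\ref{thm2.3}: the solution $u$ is $C^{1}[0,1]$, and since $H$ is just an affine rescaling of $1+u$ (cf.\ \eqref{eq3}), $H$ is $C^{1}[0,\lambda]$. Because $H\equiv 0$ on $(\lambda_{1}^{\ast},\lambda]$, continuity of $H$ and $H'$ forces
\[
H(\lambda_{1}^{\ast})=0,\qquad H'(\lambda_{1}^{\ast})=0.
\]
Since $H=H_{1}^{\ast}$ satisfies \eqref{eq37}$_{1}$ on the glued sub-interval, the first integral \eqref{eq39} is valid there with some constant $\Gamma$. Evaluating \eqref{eq39} at $y=\lambda_{1}^{\ast}$ and using $W^{\ast}(0)=0$ from \eqref{eq-2} yields $\tfrac{\varepsilon}{2}\cdot 0-[0-\varpi\cdot 0]=\Gamma$, i.e.\ $\Gamma=0$. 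Finally, \eqref{gs} gives $\sigma=-\Gamma=0$, which is the claim.

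The calculation itself is essentially a one-line substitution, so the substantive content of the proof is not analytic but structural: it resides in the preceding results that (i) pin down the precise form of broken solutions in $\mathcal{C}_{1}\setminus\mathcal{C}_{1}^{o}$ (Theorem~\ref{thm4.5}), (ii) guarantee that $H'$ is continuous across the glued/broken interface (Theorem~\ref{thm2.3}), and (iii) identify the integration constant $\Gamma$ with $-\sigma$ via \eqref{gs}. The only point that requires mild care is the bookkeeping between the rescaled variable $s\in[0,1]$ of Sections~2--4 and the Lagrangian variable $y\in[0,\lambda]$ of Section~5, which amounts to checking that the broken set in the $s$-variable maps to a genuine interval in the $y$-variable sharing a common boundary point with the glued part, and that $C^{1}$ regularity is preserved under the rescaling \eqref{eq3}.
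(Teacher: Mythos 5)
Your proposal is correct and takes essentially the same approach as the paper: both evaluate the first integral \eqref{eq39} at the glued/broken interface $y=\lambda_1^*$, where $H=0$ and $H'=0$, and combine $W^*(0)=0$ with the identification $\sigma=-\Gamma$ from \eqref{gs}. The only cosmetic difference is how you obtain $H'(\lambda_1^*)=0$: you argue it from $C^1$ regularity (Theorem~\ref{thm2.3}) and continuity across the interval where $H\equiv 0$, whereas the paper reads it off directly from the natural boundary conditions on the glued set (Corollary~\ref{cor2.5} and \eqref{eq37}$_2$); these are two faces of the same fact.
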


\bpr Recall from the discussion above \eqref{eq44} that fracture occurs 
at one of the two ends of the bar; without loss of generality, we presume 
that  $ u $ is characterized by \eqref{eq44}, \eqref{eq45}. Then  $ u(\lambda_{1}^{\ast } 
/\lambda )=-1\Leftrightarrow H_{1}^{\ast } (\lambda_{1}^{\ast } )=0, $  and 
 $ u^{\prime }(\lambda_{1}{}^{\ast } /\lambda )=0\Leftrightarrow H_{1}^{\ast}{}^{\prime }(\lambda_{1}^{\ast } )=0$.  Now \eqref{eq39} is valid for 
 $ 0 \le  \lambda  \le  \lambda_{1}^{\ast }  $ ; in particular, at 
 $ y=\lambda_{1}^{\ast } , $ we obtain  $ \Gamma =-\sigma =W^{\ast }(0)-\varpi 
\cdot 0=0, $ cf. (1.5).    \epr  

The bounded solution branch  $ \overline {\mathcal{C}_{1}^{o} }  $ connects the 
bifurcation point  $ (\lambda_{1} ,0) $  to the fracture point  $ (\lambda_{\ast 
} ,u_{\ast } ):=(\lambda_{1}^{\ast } ,u_{1}^{\ast } ), $ cf. Corollary \ref{cor4.4}. 
  \tred{Hence, with  $ \lambda 
=f(1) $  playing the role of macroscopic stretch, the projection of  $ \overline 
{\mathcal{C}_{1}^{o} }  $  onto the  $ (\lambda ,\sigma ) $  plane,
\beq\label{slt} \mathcal{T}=\{(\la,\sigma)\in  \mathbb{R}^2: (\la,u)\in \overline {\mathcal{C}_{1}^{o} } :  \hbox{$\sigma$ satisfies \eqref{eq58} for $H$ as in \eqref{eq3}}\} \eeq
connects 
 $ (\lambda_{1} ,\Dot{{W}}(\lambda_{1} )) $  to  $ (\lambda_{\ast } ,0)$.  An example for a specific choice of $\sw$ (details at the end of this section) is shown in Fig.~\ref{Fig4}, in which, for $\ep=2/49$, $\mathcal{T}$ is the orange curve, connecting the orange bifurcation point $ (\lambda_{1} ,\Dot{{W}}(\lambda_{1} )) $ to the black fracture point $ (\lambda_{\ast } ,0)$. The 
remaining part  $ \mathcal{C}_{1} \backslash \mathcal{C}_{1}^{o}  $  consists of broken (fractured) solutions of the form \eqref{eq44} with an opened crack; it projects 
onto the ray 
 \beq\label{sltt} \mathcal{F} =\{(\lambda,0):\lambda \ge \lambda_{\ast } \}. \eeq  
 In the example of Fig.~\ref{Fig4}, for $\ep=2/49$, $\mathcal{F}$ is the part of the horizontal axis to the right of the black point.}

We can say something about the stability of the component of the first branch consisting of broken solutions at this 
general stage. 
\begin{thm}\label{thm5.2} There exits  $ \lambda_{m} <\infty  $,  such that, for every $ (\lambda ,u)\in 
\mathcal{C}_{1} \backslash \mathcal{C}_{1}^{o}  $ with  $ \lambda >\lambda_{m} 
$, \tred{the broken solution $u$ is not only the only stable solution, but also the global minimizer of $V_\ep [\lambda,\cdot]$. } \end{thm}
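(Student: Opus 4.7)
The plan is to combine Theorem~\ref{thm4.3} (boundedness of the sub-branch), Propositions~\ref{prop4.6} and~\ref{prop4.7} (instability of the trivial solution for $\lambda>\lambda_1$ and of all higher-mode branches), and Proposition~\ref{prop2.1} (existence of a global minimizer) to eliminate every possibility at large $\lambda$ except a broken solution on $\mathcal{C}_1\setminus\mathcal{C}_1^o$.

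First I would set
\[
\lambda_c:=\sup\{\lambda:\,(\lambda,u)\in\overline{\mathcal{C}_1^o}\text{ for some }u\},
\]
which is finite by Theorem~\ref{thm4.3}, and define $\lambda_m:=\max\{\lambda_1,\lambda_c\}$. Fix $\lambda>\lambda_m$. Proposition~\ref{prop2.1} furnishes a global minimizer $u^\ast\in\mathcal{K}$ of $V_\varepsilon[\lambda,\cdot]$, and Remark~\ref{rem2.4} implies $u^\ast\in C^1[0,1]$ and that $u^\ast$ satisfies the variational inequality \eqref{eq9}; being a minimizer, $u^\ast$ is stable in the sense of Section~4.

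Next, I would invoke the classification of equilibria built up in Sections 2--4: every solution of \eqref{eq9} is either the trivial solution, lies on $\mathcal{C}_n^o$ for some $n\ge 1$ (with the phase-plane structure of Proposition~\ref{prop4.2}), or lies on $\mathcal{C}_n\setminus\mathcal{C}_n^o$ (a broken solution produced by the cut-and-paste procedure of Theorem~\ref{thm4.5}). For $\lambda>\lambda_m$ each possibility other than broken solutions in $\mathcal{C}_1\setminus\mathcal{C}_1^o$ is excluded: the trivial solution is unstable by Proposition~\ref{prop4.6}; $\mathcal{C}_1^o$ has no element at this $\lambda$ by the choice of $\lambda_c$; and every element of $\mathcal{C}_n$ with $n\ge 2$ is unstable by Proposition~\ref{prop4.7}. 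Consequently $u^\ast$ must lie on $\mathcal{C}_1\setminus\mathcal{C}_1^o$, and by Theorem~\ref{thm4.5} it is, up to the reflective symmetry of the end that fractures, the broken solution $u_{1,\lambda}$ of \eqref{eq44}--\eqref{eq45}. Since every other equilibrium has been shown to be either unstable or nonexistent, $u_{1,\lambda}$ is simultaneously the global minimizer of $V_\varepsilon[\lambda,\cdot]$ and the only stable solution.

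The main obstacle in the plan, in my view, is cleanly justifying the classification step---i.e., showing that every solution of \eqref{eq9} belongs either to the trivial branch or to some $\mathcal{C}_n$, thereby ruling out isolated components of the solution set disconnected from all bifurcation points. This should follow from Cauchy uniqueness for the ODE \eqref{eq12} on each glued interval (Corollary~\ref{cor2.5}) together with the phase-plane picture of Section~4---every admissible trajectory inherits a well-defined nodal mode $n$, and nodal properties can change only at the trivial solution (Proposition~\ref{prop3.2})---but the verification requires some care, especially when broken sets with multiple components are admitted.
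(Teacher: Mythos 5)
Your proposal follows essentially the same process-of-elimination argument as the paper: instability of the trivial solution (Proposition~\ref{prop4.6}) and of higher-mode solutions (Proposition~\ref{prop4.7}), nonexistence of strictly positive mode-1 solutions past $\lambda_m$ (extracted from the proof of Theorem~\ref{thm4.3}), existence of a global minimizer (Proposition~\ref{prop2.1}), and identification of the broken solution from \eqref{eq44}--\eqref{eq45} as the only remaining candidate. The classification caveat you flag at the end is real but is glossed over in the same way by the paper, which invokes ``the phase-plane construction \eqref{eq44}'' without spelling out why every equilibrium of \eqref{eq9} inherits a well-defined mode; so your account is faithful to the paper's level of rigor here as well.
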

\bpr This follows by process of elimination: According to 
Proposition 4.6, the trivial solution  $ u\equiv 0 $ is unstable for all 
 $ \lambda \in (\lambda_{1} ,\infty ), $ and Proposition 4.7 implies that all 
higher-mode solutions are unstable. Moreover, the proof of Theorem 4.3 
implies that there are no monotone, strictly positive solutions  $ H>0 $  on 
 $ [0,\lambda ] $  for  $ \lambda >\lambda_{m} , $ where  $ \lambda_{m} <\infty  $  is 
some positive constant. Hence for fixed  $ \lambda >\lambda_{m} , $ the only 
potentially stable solutions are given by \eqref{eq45} and its anti-symmetric 
version generated by $ u(s)\to u(1-s)$.  Indeed, the phase-plane construction 
\eqref{eq44} demonstrates that there are no other possibilities for solutions with 
a single break. We then infer the result from Proposition \ref{prop2.1}.    \epr   
\begin{rem}After the bar breaks at $\la_\ast$, we can continue pulling the broken end $y=\la_\ast$ further to any $\la>\la_\ast$, cf. \eqref{eq44}. The interval 
$\la_\ast<y<\la$ is ``aether'' or vacuum $H=0$ or ``$F=\infty$'' namely a displacement discontinuity, known as the crack-opening displacement. See the discussion pertaining to \eqref{mb}. Here, broken solutions involve a two-phase inverse deformation, with the opened crack, $\la_\ast<y<\la$ in the broken phase, or the energy well at $H=0$ in Fig.~\ref{Fig2b}. The rest of the deformed bar  is in the unbroken phase (convex well containing $H=1$) with a transition layer in between, whose size depends on $\ep$. Despite this being a ``diffuse interface model'' due to higher gradients, the crack faces (boundaries of the $H=0$ phase) are sharply delineated, in contrast to the diffuse cracks in damage or phase field models \cite{bourdin}. This is due to the unilateral constraint. \end{rem}
\begin{rem} Theorems \ref{thm5.1} and \ref{thm5.2} imply that the bar breaks at a finite macroscopic stretch,  at most $\la_m$, beyond which the stress (of the only stable solution) vanishes. This is somewhat different than the behavior predicted by various nonlocal or cohesive zone models \cite{triant,dPTrusk}, where, for fracture,  the stress appears to approach zero only as the stretch goes to infinity.
Our result agrees with the discrete model \cite{bdg}, where however it is assumed that the constituent springs break (the force vanishes) at finite stretch. In contrast, our the underlying stress-stretch law is not restricted to vanish at finite homogeneous stretch. Here stress vanishes at finite average stretch due to bifurcation. \end{rem}

In order to characterize the projection $\mathcal{T}$ \eqref{slt} more explicitly, we turn to the methodology of Carr, Gurtin \& Slemrod\cite{cgs}. For
$0\le a <b<1$ and $z\ge 0$, let
\begin{equation}
\label{heq2}
 U(z,a,b)= W^{\ast }(z)-\varpi(a,b) z+\Gamma(a,b),\end{equation}
where
\beq\label{pg} \varpi(a,b)= {\frac{W^{\ast }(b)-W^{\ast }(a)}{b-a}}, \qquad \Gamma(a,b)=\varpi(a,b) a-W^{\ast }(a),
\eeq
and (formally) define
\beq\label{gg}g_0(a,b)=\int_a^b\frac{1}{\sqrt{U(z,a,b)}}dz, \quad g_1(a,b)=\int_a^b\frac{z}{\sqrt{U(z,a,b)}}dz
\eeq
\begin{prop}
\label{prop5.3}(i) Suppose $(\la,u)\in \overline {\mathcal{C}_{1}^{o} }$ (the first bifurcating sub-branch up to fracture) with $H(y)$ as in \eqref{eq3}. Let
$H(0)=H_2$ and $H(\la)=H_1$. Then these satisfy
\beq\label{cond1}0\le H_1<\kappa, \quad H_1<H_2<1,\eeq
\beq\label{cond1.1}U(H,H_1,H_2)>0 \quad \forall H\in(H_1,H_2),\eeq
\begin{equation}
\label{cond2}\sqrt{\frac{\ep}{2}}g_0(H_1,H_2)=\la,\qquad \sqrt{\frac{\ep}{2}}g_1(H_1,H_2)=1.
\end{equation}
Moreover the stress $\sigma$ from \eqref{eq58} is given by
\beq\label {ss}\sigma=W^{\ast }(H_1)-\varpi(H_1,H_2) H_1 \eeq
Conversely, suppose $H_1$, $H_2$ abide by \eqref{cond1},  \eqref{cond1.1} and satisfy \eqref{cond2}$_2$. Define $\lambda$ by \eqref{cond2}$_1$. Then there is $H:[0,\lambda]\to[0,1]$ with $H(0)=H_2$ and $H(\la)=H_1$, such that the corresponding $(\la,u)\in \overline {\mathcal{C}_{1}^{o} }$. The inverse of  this $H$ is given by
\beq\label{yh}\hat y(H)=\sqrt{\frac{\ep}{2}}\int_{H}^{H_2}\frac{1}{\sqrt{U(z,H_1,H_2)}}dz\eeq
for $H\in (H_1,H_2)$. The projection \eqref{slt} of the corresponding $(\la,u)$ onto the stress-stretch plane is $(\la,\sigma)\in\TT$ with $\la$ given by \eqref{cond2}$_1$ and $\sigma$ given by \eqref{ss}.
\hfill\break 
(ii) Setting $H_1=0$ in part (i) above gives the fracture point $(\la_\ast,u_\ast)$. In particular, the fracture stretch
$$\la_\ast=\sqrt{\frac{\ep}{2}}g_0(0,H_2),\; \hbox{ where $H_2$ is a root of } \sqrt{\frac{\ep}{2}}g_1(0,H_2)=1,$$
whereas the corresponding stress $\si_\ast=0$.
\hfill\break 
%
%
(iii) The projection $\TT$ in \eqref{slt} is located to the right of the rising branch of the stress-stretch curve, $\{(\la,\Dot W(\la)): 0<\la<1/\kappa\}$, namely,  
\beq\label{TTb}(\la,\si)\in\TT\implies 0\le\sigma<\dot W(1/\kappa),\quad \la>\la_\si,\eeq
where  $\la_\si$ is the unique solution of $\dot W(\la_\si)=\si$ in $[1,1/\kappa]$.
Moreover, as $\ep\to 0$,  $\TT$  approaches the rising branch of the stress-stretch curve 
 in the following sense. For fixed $\si$ with $(\la,\si)\in\mathcal{T}$, 
\beq\label{brl} \la\searrow \la_\si \quad\hbox{as }\ep\searrow 0. \eeq
 In particular, the fracture stretch $\la_\ast\searrow 1$ as $\ep\searrow 0$.
\end{prop}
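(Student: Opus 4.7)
The plan is to use the first integral \eqref{eq39} of the ODE \eqref{eq37} as the primary tool, combined with the monotone-orbit structure on the first branch from Proposition \ref{prop4.2}. For the forward direction of (i), take $(\la,u)\in\overline{\mathcal{C}_1^o}$; Proposition \ref{prop4.2} with $n=1$ gives that the associated $H$ is $C^2$ and strictly monotone on $[0,\la]$ with $H'(0)=H'(\la)=0$. Without loss of generality take $H$ decreasing, so $H(0)=H_2>H_1=H(\la)$. Evaluating \eqref{eq39} at the two endpoints gives $W^\ast(H_1)-\varpi H_1=W^\ast(H_2)-\varpi H_2=-\Gamma$; solving this pair identifies $\varpi=\varpi(H_1,H_2)$ and $\Gamma=\Gamma(H_1,H_2)$ as in \eqref{pg}, and \eqref{gs} then yields the stress formula \eqref{ss}. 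Strict interior monotonicity gives $(H')^2>0$, hence $U(H,H_1,H_2)>0$ on $(H_1,H_2)$, establishing \eqref{cond1.1}. The change of variables $dy=-\sqrt{\ep/2}\,dH/\sqrt{U}$ in $\int_0^\la dy=\la$ and in the volume constraint $\int_0^\la H\,dy=1$ yields \eqref{cond2}. For the bounds in \eqref{cond1}: the orbit must encircle the center $\alpha=1/\la$, so $H_1<1/\la<H_2$; the phase-plane requirement that $\alpha$ be a center (i.e., $\Ddot W^\ast(\alpha)<0$) forces $1/\la<\kappa$, giving $H_1<1/\la<\kappa$. The more delicate bound $H_2<1$ will be obtained from the structure of $\overline{\mathcal{C}_1^o}$: at the bifurcation point $H_1=H_2=1/\la_1<1$, and one shows that along the sub-branch $H_1$ reaches $0$ before $H_2$ can reach $1$, using the phase portraits in Fig.~\ref{Fig6} together with continuity.

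For the converse in (i), given $H_1,H_2$ satisfying \eqref{cond1}, \eqref{cond1.1} and \eqref{cond2}$_2$, define $\la$ by \eqref{cond2}$_1$ and construct $H$ by inverting the decreasing homeomorphism $\hat y:[H_1,H_2]\to[0,\la]$ from \eqref{yh}. Convergence of \eqref{yh} at the endpoints follows because $\dot U(H_i)=\dot W^\ast(H_i)-\varpi\ne 0$ (simple zeros). Differentiating $\hat y\circ H=\mathrm{id}$ yields $H'(y)=-\sqrt{2/\ep}\sqrt{U(H(y))}$, hence $(H')^2=(2/\ep)U$; a further differentiation produces the ODE \eqref{eq37}$_1$ with $\varpi=\varpi(H_1,H_2)$, together with $H'(0)=H'(\la)=0$. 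The volume constraint $\int_0^\la H\,dy=1$ reduces, via the same change of variables, to \eqref{cond2}$_2$. Passing back to $u$ via \eqref{eq3} produces an equilibrium of \eqref{eq9}; a continuous deformation of the pair $(H_1,H_2)$ down to the diagonal $(1/\la_1,1/\la_1)$ connects this solution to the bifurcation point, placing it on $\overline{\mathcal{C}_1^o}$. Part (ii) is then immediate: setting $H_1=0$ forces $H(\la)=0$, so $u_\ast\in\partial\mathcal{K}$, which is the fracture point by Corollary \ref{cor4.4}, and \eqref{ss} gives $\sigma_\ast=W^\ast(0)=0$.

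For (iii), I use the Shield identity $W^\ast(H)/H=W(1/H)$ to rewrite \eqref{ss} as
\[
\sigma(H_2-H_1)=W^\ast(H_1)H_2-W^\ast(H_2)H_1=H_1H_2\bigl[W(1/H_1)-W(1/H_2)\bigr].
\]
Since $0\le H_1<H_2<1$ implies $1/H_1>1/H_2>1$, and $W$ is strictly increasing on $[1,\infty)$ by \eqref{eq-1}, $W(1/H_1)\ge W(1/H_2)$, hence $\sigma\ge 0$. The position $\la>\la_\sigma$ and the strict upper bound $\sigma<\Dot W(1/\kappa)$ both follow from $\la>1/\kappa\ge\la_\sigma$, together with the fact that $\Dot W$ attains its maximum on $[1,\infty)$ at $1/\kappa$. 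For the limit $\ep\to 0$, fix $\sigma$ (hence $\Gamma=-\sigma$). Then \eqref{cond2}$_2$ forces $g_1(H_1,H_2)\to\infty$ as $\ep\to 0$. Since the integrands in $g_0,g_1$ have only integrable square-root singularities at the endpoints, divergence of $g_1$ forces an interior zero of $U$, i.e., the secant $\varpi H-\Gamma$ becomes tangent to $W^\ast$ at some $H_\ast\in(H_1,H_2)$. At tangency, $\Dot W^\ast(H_\ast)=\varpi$ and $W^\ast(H_\ast)-H_\ast\Dot W^\ast(H_\ast)=-\Gamma=\sigma$; using the identity $W^\ast(H)-H\Dot W^\ast(H)=\Dot W(1/H)$ (a direct consequence of $W^\ast(H)=HW(1/H)$), this gives $\Dot W(1/H_\ast)=\sigma$, whence $H_\ast=1/\la_\sigma$. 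Both $g_0$ and $g_1$ diverge logarithmically from the same singularity at $H_\ast$, so their ratio has a finite limit, and dividing \eqref{cond2}$_1$ by \eqref{cond2}$_2$ gives $\la=g_0/g_1\to 1/H_\ast=\la_\sigma$, establishing \eqref{brl}.

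The main obstacles will be the sharp upper bound $H_2<1$ in (i), which requires careful tracking along the branch rather than a local-orbit argument (since orbits with $H_2>1$ do exist in the phase plane for $\varpi>0$), and the matching of divergence rates of $g_0$ and $g_1$ in the $\ep\to 0$ limit. For the latter, one needs a Taylor expansion of $U$ near the interior tangency showing that $g_0/g_1\to 1/H_\ast$ exactly, rather than merely some ratio bounded between $1/H_2$ and $1/H_1$.
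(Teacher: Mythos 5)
Your plan tracks the paper's proof closely in the forward and converse directions of (i) and in (ii), and the $\ep\to 0$ analysis in (iii) is essentially the paper's argument (logarithmic divergence of $g_0,g_1$ near the merging turning point, with ratio tending to $1/H_\sigma=\la_\sigma$); your use of the Shield identity $W^*(H)/H=W(1/H)$ to make $\sigma\ge 0$ explicit is a nice detail that the paper leaves implicit. One small slip in (iii): as $\varpi\searrow\varpi_\sigma$ the double zero of $U$ develops at the right turning point $H_2\to H_\sigma$, i.e.\ at the \emph{endpoint} of $(H_1,H_2)$ rather than an interior point, but this does not affect the divergence computation.

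The genuine gap is the bound $H_2<1$ in \eqref{cond1}. You flag it as the main obstacle, propose branch-tracking from the bifurcation point, and motivate the worry by noting that phase-plane orbits with $H_2>1$ ``do exist'' for $\varpi>0$. In fact no branch-tracking is needed: the paper's proof is a short geometric observation resting only on \eqref{eq-2}. The conditions $U(H_1)=U(H_2)=0$ and $U>0$ on $(H_1,H_2)$ say that the line $L(H)=\varpi H-\Gamma$ meets the graph of $W^*$ at $H_1,H_2$, with $L(H_i)=W^*(H_i)\ge 0$, and lies strictly below the graph in between. If $H_2>1$, then $1\in(H_1,H_2)$ forces $L(1)<W^*(1)=0$, while $L(H_1)\ge 0$ and $L(H_2)=W^*(H_2)>0$; a line cannot be $\ge 0$ at $H_1$, $<0$ at $1$, and $>0$ at $H_2$ with $H_1<1<H_2$. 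The borderline $H_2=1$ is ruled out too: if $H_1>0$, then $L(H_1)>0=L(1)$ gives $\varpi<0$, whence $\dot{U}(1)=\dot{W}^{\ast}(1)-\varpi=-\varpi>0$ and $U<0$ just to the left of $1$, contradicting $U>0$ on $(H_1,1)$; if $H_1=0$, then $L\equiv 0$, $\varpi=0$, and $H_2=1$ is then the saddle, giving a degenerate orbit of infinite period. Thus any admissible orbit (one with $H_1\ge 0$, $U(H_1)=0$, $U>0$ on $(H_1,H_2)$) automatically has $H_2<1$; the phase-plane orbits with $H_2>1$ that you mention necessarily violate these constraints, so the local-orbit argument suffices after all. (The bound $H_1<\kappa$ likewise follows from this geometry: if $H_1\ge\kappa$, $W^*$ is strictly convex on $[H_1,H_2]$ and the chord would lie \emph{above} the graph, forcing $U<0$.)
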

%
%
\bpr 
By hypothesis,  Proposition \ref{prop4.2} with $n=1$ and the phase portrait (see discussion leading to \eqref{eq40}), $H$ is strictly monotone on $[0,\la]$ and $H'(y) \not=0$ except at $y=0$, $\la$. Thus \eqref{eq39} and the natural boundary conditions $H'(0)=H'(\la)=0$ from \eqref{eq12}$_2$, imply that  $W^{\ast }(H)-\varpi H+\Gamma=0$  for $H=H(\la)=:H_1$ and $H=H(0)=:H_2$,  and $>0$ for $H\in(H_1,H_2)$.  This in turn shows that 
\beq\label{u}W^{\ast }(H)-\varpi H+\Gamma=U(H,H_1,H_2)\eeq
cf. \eqref{heq2}, \eqref{pg}, and that  \eqref{cond1.1} holds, whereas $U(H_i,H_1,H_2)=0$ for $i=1,2$. This means that the straight line 
$\varpi H-\Gamma$ intersects  the graph of  $ {{W}}^{\ast }(\cdot )$ at $H_1$ and $H_2$, and is below it in between. Because of \eqref{eq-2},  this is only possible if \eqref{cond1} holds.
Substituting \eqref{u} in  \eqref{eq39}, solving for $H'$ and keeping the negative of the two solutions (the other giving equivalent results)  we infer
\beq
\label{heq}
H' (y)=-\sqrt{(2/\ep) \,U(H(y),H_1,H_2)},\quad y\in[0,\la]
\eeq
This can be solved for the inverse  $y=\hat y(H)$ of $H(y)$,  noting that $\hat y(H_2)=0$, yielding \eqref{yh}.
Using the latter, the requirement that $\hat y(H_1)=\la$ then gives the first of \eqref{cond2}, while
the integral constraint \eqref{eq2} reduces to the second of \eqref{cond2} after changing variables from $y$ to $H$.
 Also, \eqref{ss} follows from \eqref{gs} and \eqref{pg}. \par
 To show the converse,  suppose $H(y)$ is  a solution of  \eqref{heq} with $H(0)=H_2$; by \eqref{cond1.1}   it is monotone. Define $\hat y$ from \eqref{yh}. Then it is the inverse of $H(y)$, and  \eqref{cond2}$_1$ implies that $\hat y(H_1)=\la$.  Hence $H(0)=H_2$, $H(\la)=H_1$ and \eqref{cond2}$_2$ ensures the integral constraint  \eqref{eq2}, while  \eqref{heq} implies \eqref{eq39}. As a result the corresponding $(\la,u)\in\overline  {\mathcal{C}_{1}^{o} }$. \par
Part (ii) is immediate, after setting $H_1=0$ in \eqref{cond2} and \eqref{ss}, recalling that $\sw(0)=0$.  \par
To show (iii), we note that the mapping $(H_1,H_2)\mapsto (\varpi,\Gamma)$ defined by \eqref{pg} is one-to-one on the set of $(H_1,H_2)$ satisfying \eqref{cond1} and \eqref{cond1.1} \cite{cgs}.  We then rewrite 
\beq\label{pr1}g_i(H_1,H_2)=G_i(\varpi,\Gamma)=G_i(\varpi,-\si)\eeq
 for $i=0,1$,  in view of \eqref{gs}. For fixed $\si\in  [0,\dot W (1/\kappa))$ there is an interval  of $\vp$ values for which the chord $\varpi H+\si$ intersects  the graph of  $ \sw(\cdot )$ at two points $H_1<H_2$, and is below it in-between. This happens if and only if $\vp_\si<\vp<\vp^\si$ where  $\vp^\si:=\dot\sw(H^\si)>\vp_\si:=\dot\sw(H_\si)$ are slopes of rays  through the point  $(0,\si)$ and tangent to the graph of $\sw$ at $H^\si$ and $H_\si$, respectively. As a result, using \eqref{shield} and results from \cite{shield},
\beq\label{pr2}1/H_\si=\la_\si:= \hbox{the unique solution of $\dot W(\la_\si)=\si$ in $[1,1/\kappa]$}\eeq
Because of \eqref{eq57}, we have $\la>1/H_2$. From \eqref{eq-2} it follows that $H_\si>H_2$. As a result, $\la>\la_\si$. Also, if $\si\ge \dot W(1/\kappa)$, there can be at most one intersection, so the only possible  $(\la,\si)\in\TT$ is the bifurcation point $(\la_1,\dot W(\la_1)$. But since $\la_1>1/\kappa$, $\si_1=\dot W(\la_1)<\dot W(1/\kappa)$, so if  if $\si\ge \dot W(1/\kappa)$, then $(\la,\si)\not\in\TT$.  This confirms \eqref{TTb}.
Adapting results from \cite{cgs} we have that for some constant $C>0$,
\beq\label{pr3}G_0(\vp,-\si) \sim -C\log(\vp-\vp_\si),\quad G_1(\vp,-\si) \sim -H_\si C\log(\vp-\vp_\si), \quad\hbox{as } \vp\searrow\vp_\si
\eeq
Note that \eqref{cond2}$_2$ becomes 
$$G_1(\vp,-\si)=\sqrt{2/\ep}.$$
For $\ep$ sufficiently small  this has a solution \cite{cgs}
\beq\label{pr4}\vp=\vp_\ep \sim \vp_\si+e^{-C\sqrt{2/\ep} } \hbox{ as } \ep\to 0.\eeq 
Dividing \eqref{cond2}$_1$ by \eqref{cond2}$_2$ and using \eqref{pr1}-\eqref{pr3}, we find
$$\la=\frac{G_0(\vp_\ep,-\si)}{G_1(\vp_\ep,-\si)}\to \la_\si \; \hbox{as } \ep\to0.$$
Together with \eqref{TTb} this confirms \eqref{brl}.
\epr
\begin{rem} In the context of the original formulation, the analogues of \eqref{heq2}-\eqref{cond2} are due to \cite{cgs}; see also \cite{triant}.  For each $H_1\in[0,1/\la_1)$, the second of \eqref{cond2} is uniquely solvable for $H_2=\hat H(H_1)$.
Define
\beq
\label{bran} \hat\la (H_1,\ep)=\sqrt{\frac{\ep}{2}}g_0(H_1,\hat H(H_1)), \quad \hat \sigma (H_1)=W^{\ast }(H_1)-\varpi(H_1,\hat H(H_1)) H_1
\eeq
Then the above is a parametrization  of $\TT$, cf. \eqref{slt}, in the $(\la,\sigma)$ plane. In specific examples we (numerically) solve the second of \eqref{cond2} for $H_2$ in terms of $H_1$. This gives the parametrization \eqref{bran} of $\TT$.
\end{rem}
Next we consider some properties of stable broken solutions (of the Euler-Lagrange inequality corresponding to \eqref{eq1}) on the first branch $\mathcal{C}_{1}$ with the inverse stretch vanishing on an entire  subinterval. Such solutions have $H=H_b(y)$ given by the right-hand side of \eqref{eq44} for $y\in[0,\la]$ and $\la>\la_\ast$ as guaranteed by Theorems \ref{thm4.5} and \ref{thm5.1}. 
\begin{prop}
\label{prop5.4}
As $\ep\to 0$, the energy  \eqref{eq1} of a broken solution $H_b$ is
\beq\label{ener} E_\ep [H_b]=\sqrt{\ep}\int_0^1 \sqrt{2W^\ast (H)} dH +o(\sqrt{\ep}) = \sqrt{\ep}\int_1^\infty \sqrt{2W (F)/F^5} dF +o(\sqrt{\ep})\eeq
\end{prop}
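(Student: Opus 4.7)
The plan is to reduce the energy to an integral over the unbroken part $[0,\la_1^\ast]$, use the first integral \eqref{eq39} to eliminate the higher-gradient term, extract the leading-order $\sqrt{\ep}$ behaviour via the constraint in Proposition \ref{prop5.3}(ii), and finally control the remaining residue through Proposition \ref{prop5.3}(iii).

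Since $H_b\equiv 0$ on $(\la_1^\ast,\la]$ and $W^\ast(0)=0$, the energy collapses to
\[
E_\ep[H_b]=\int_0^{\la_1^\ast}\!\Bigl[\tfrac{\ep}{2}(H_1^{\ast\prime})^2+W^\ast(H_1^\ast)\Bigr]dy.
\]
On this interval $H_1^\ast$ solves \eqref{eq37} with first integral \eqref{eq39}, Theorem \ref{thm5.1} gives $\si=0$, and \eqref{gs} then forces $\Gamma=0$, so $\tfrac{\ep}{2}(H_1^{\ast\prime})^2=W^\ast(H)-\vp H$ and the energy density reduces to $2W^\ast(H)-\vp H$. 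Since $H_1^\ast$ decreases monotonically from $H_2:=H_1^\ast(0)$ to $0=H_1^\ast(\la_1^\ast)$ with $H_1^{\ast\prime}=-\sqrt{(2/\ep)(W^\ast(H)-\vp H)}$, changing variables from $y$ to $H$ gives
\[
E_\ep[H_b]=\sqrt{\tfrac{\ep}{2}}\int_0^{H_2}\!\frac{2W^\ast(H)-\vp H}{\sqrt{W^\ast(H)-\vp H}}\,dH.
\]
Writing $2W^\ast(H)-\vp H=2[W^\ast(H)-\vp H]+\vp H$ splits this into
\[
E_\ep[H_b]=\sqrt{2\ep}\int_0^{H_2}\!\sqrt{W^\ast(H)-\vp H}\,dH\;+\;\sqrt{\tfrac{\ep}{2}}\,\vp\int_0^{H_2}\!\frac{H\,dH}{\sqrt{W^\ast(H)-\vp H}}.
\]
The second integral is exactly $g_1(0,H_2)$, and by Proposition \ref{prop5.3}(ii) it equals $\sqrt{2/\ep}$, so the second term collapses to simply $\vp$.

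I would then pass to the limit $\ep\to 0$. Evaluating the first integral at $y=0$ (where $H_1^{\ast\prime}=0$ and $\Gamma=0$) yields $\vp=W^\ast(H_2)/H_2$; Proposition \ref{prop5.3}(iii) with $\si=0$ gives $H_2\to 1$, and the exponential estimate \eqref{pr4} used in its proof delivers $\vp=O(e^{-C\sqrt{2/\ep}})=o(\sqrt{\ep})$. For the main term, $\vp H\ge 0$ on $[0,H_2]$ implies $\sqrt{W^\ast(H)-\vp H}\le\sqrt{W^\ast(H)}$, so together with $H_2\to 1$ dominated convergence yields
\[
\int_0^{H_2}\!\sqrt{W^\ast(H)-\vp H}\,dH\;\longrightarrow\;\int_0^1\!\sqrt{W^\ast(H)}\,dH.
\]
Combining these gives the first equality in \eqref{ener}; the second follows from the substitution $H=1/F$ together with $W^\ast(H)=W(F)/F$ from \eqref{shield}.

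The hard part is justifying $\vp=o(\sqrt{\ep})$: knowing only $\vp\to 0$ would leave the ``$\vp$'' contribution of possibly the same order as the leading $\sqrt{\ep}$ term, and the exponential decay supplied by \eqref{pr4} is what pushes this residue below any algebraic power of $\ep$. The remaining steps—the change of variables, the algebraic split, and the dominated-convergence argument—are routine by comparison.
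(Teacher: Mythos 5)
Your proof is correct and follows essentially the same path as the paper's: reduce to the unbroken interval, use the first integral \eqref{eq39} with $\Gamma=0$ (since $\sigma=0$) to eliminate the higher-gradient term, change variables from $y$ to $H$, and isolate the residual proportional to $\varpi$—you extract it via the $g_1$ constraint of Proposition \ref{prop5.3}(ii), while the paper instead inserts the zero-sum term $-\varpi[H_b-1/\la]$ at the outset, but both are equivalent uses of the integral constraint and give the same $+\varpi$. Both arguments then invoke the exponential estimate \eqref{pr4} to conclude $\varpi=o(\sqrt{\ep})$, the key point you correctly identified as the crux.
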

\bpr
Modifying an argument of \cite{cgs}, we write the energy \eqref{eq1} of $H_b$ as follows, observing the constraint $\int_0^\la H_b(y)dy=1$, noting that $H_b=0$ on $[\la_*,\la]$ by \eqref{eq44} and using \eqref{heq2},
\begin{align*}
E_\ep [H_b] &=  \int_0^\la \Bigl(\frac{\ep}{2}[H'_b(y)]^2+W^*(H_b(y))-\varpi[H_b(y)-1/\la]\Bigr)dy \\
&= \int_0^{\la_*} \Bigl(\frac{\ep}{2}[H'_b(y)]^2+W^*(H_* (y))-\varpi H_* (y)\Bigr)dy+\varpi \\
&= \int_0^{\la_*} \Bigl(\frac{\ep}{2}[H'_b(y)]^2+U(H_* (y),0,H_2(\ep)\Bigr)dy+\varpi(0,H_2(\ep)) \\
&= \int_0^{\la_*} 2U(H_* (y),0,H_2(\ep))dy+\varpi(0,H_2(\ep)) \\
&=\sqrt{\ep} \int_0^{H_2(\ep)} \sqrt{2U(H,0,H_2(\ep))}dH+\varpi(0,H_2(\ep)) \\
\end{align*}
where we have used \eqref{heq} to obtain the fourth line above and the change of variables \eqref{yh} from $y$ to $H=H_b(y)$ for the fifth line. Here $H_2(\ep)$ is the root of   the second of \eqref{cond2} with $H_1=0$. As $\ep \to 0$,\eqref{pr4} applies, whereas $\vp_\si=0$ because $\si=0$, so that the second term above $\varpi(0,H_2(\ep))=o(\sqrt{\ep})$.  From \eqref{pr4} and the fact that $\sw(H)=O(H-1)^2$ as $H\to1$, we have that $H_2(\ep)=1+o(\sqrt{\ep})$. As a result we can replace the upper limit in the  integral in the last line above by $1$.
\epr

\begin{rem} Here there is a transition layer from $H$ close to $1$ to $H=0$ of size approximately $\sqrt{\ep}$ for small $\ep$. In \eqref{eq44}, this layer occurs just to the left of the crack face $y=\la_\ast$. This is easily shown.  Moreover
the first formula in \eqref{ener} is formally identical to the interfacial energy of a phase boundary with higher gradients \cite{cgs}, so this energy can be interpreted as the energy cost for the creation of new surfaces, or the surface energy of fracture in the sense of Griffith \cite{griffith}.
\end{rem}
In order to gain more information concerning the stability of the first branch, the shape of its projection $\TT$ in the $(\la,\si)$ plane and the location of the fracture stretch 
$\lambda_{\ast } 
$,  we turn to  specific models for $\sw$ and 
compute the first global branch of solutions.

\begin{figure}
  \centering
\includegraphics[width=0.8\textwidth]{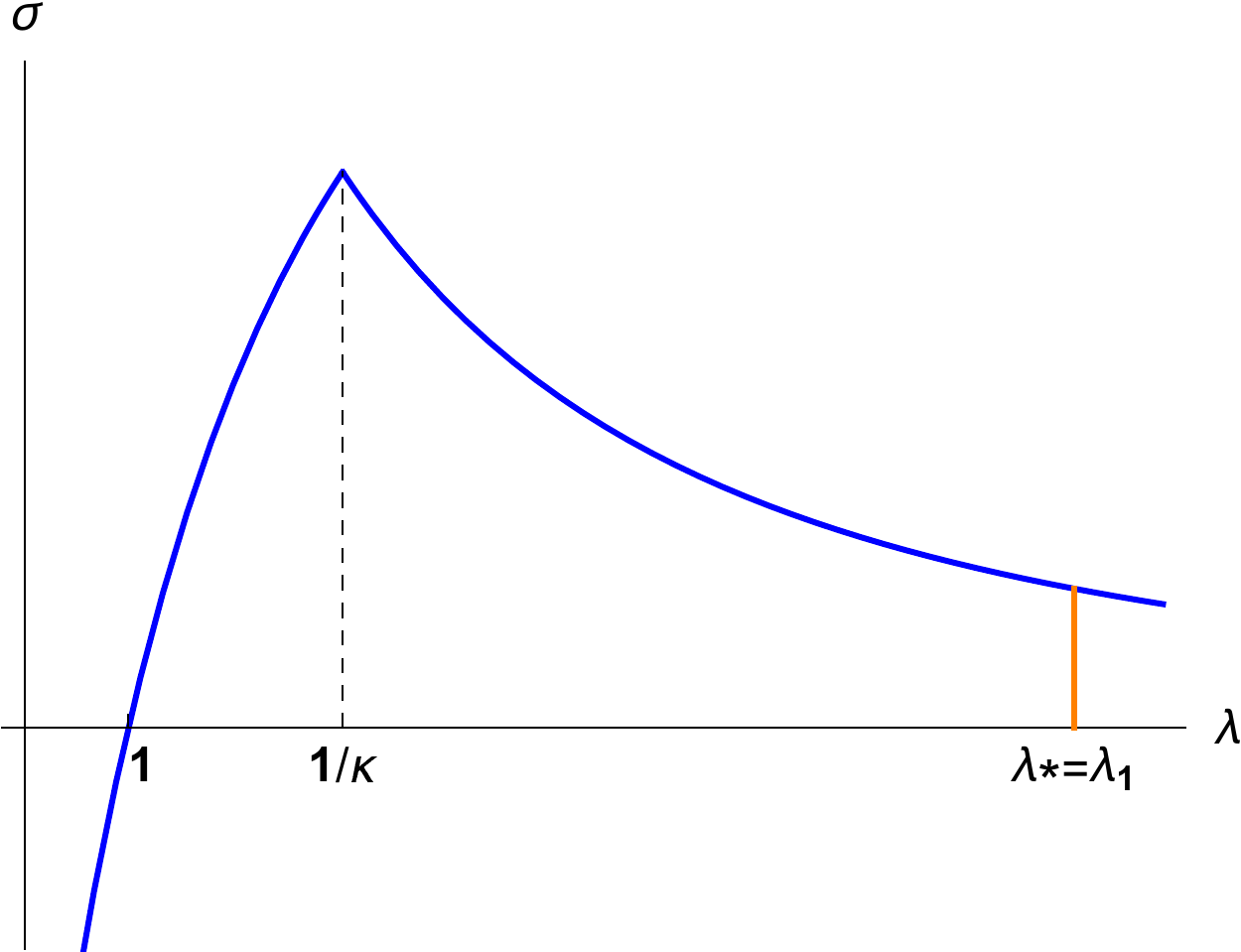}
  \caption{The homogeneous stress \textit{vs} stretch trivial branch (blue) and the projection $\mathcal{T}$ (orange) of the first bifurcation branch for the special constitutive law \eqref{W} in 
  Example \ref{ex5.9} for 
   $ \ep  =  16/\pi^{2} $.}
  \label{Fig7}
\end{figure}

\begin{ex}\label{ex5.9}
 \rm We introduce a special $W^\ast$ that is piecewise-quadratic, so that the Euler-Lagrange equation is (piecewise) linear. Let
$$\kappa=1/\sqrt{2},\quad d=\sqrt{2}-1$$
and define
\begin{equation}\label{W} W^\ast(H)=\begin{cases}
d^2-(H-d)^2, & 0\le H\le \kappa, \\
(H-1)^2, & \kappa<H<\infty.\\
\end{cases}
\end{equation}
\begin{figure}
  \centering
 \subfloat[]{\includegraphics[width=0.42\textwidth]{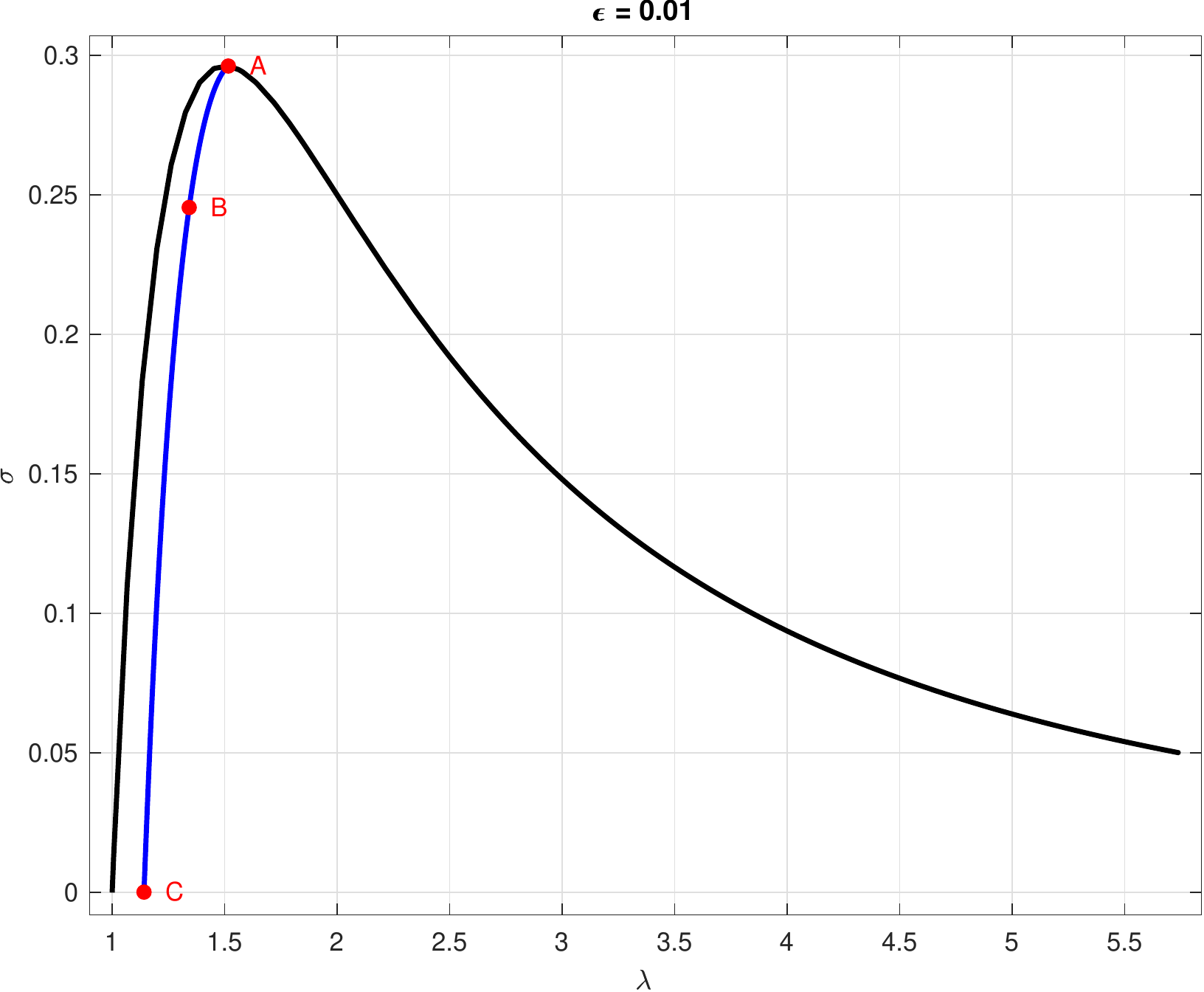}
  \label{Fig8a}}
  \hspace{0.8cm}
 \subfloat[]{\includegraphics[width=0.42\textwidth]{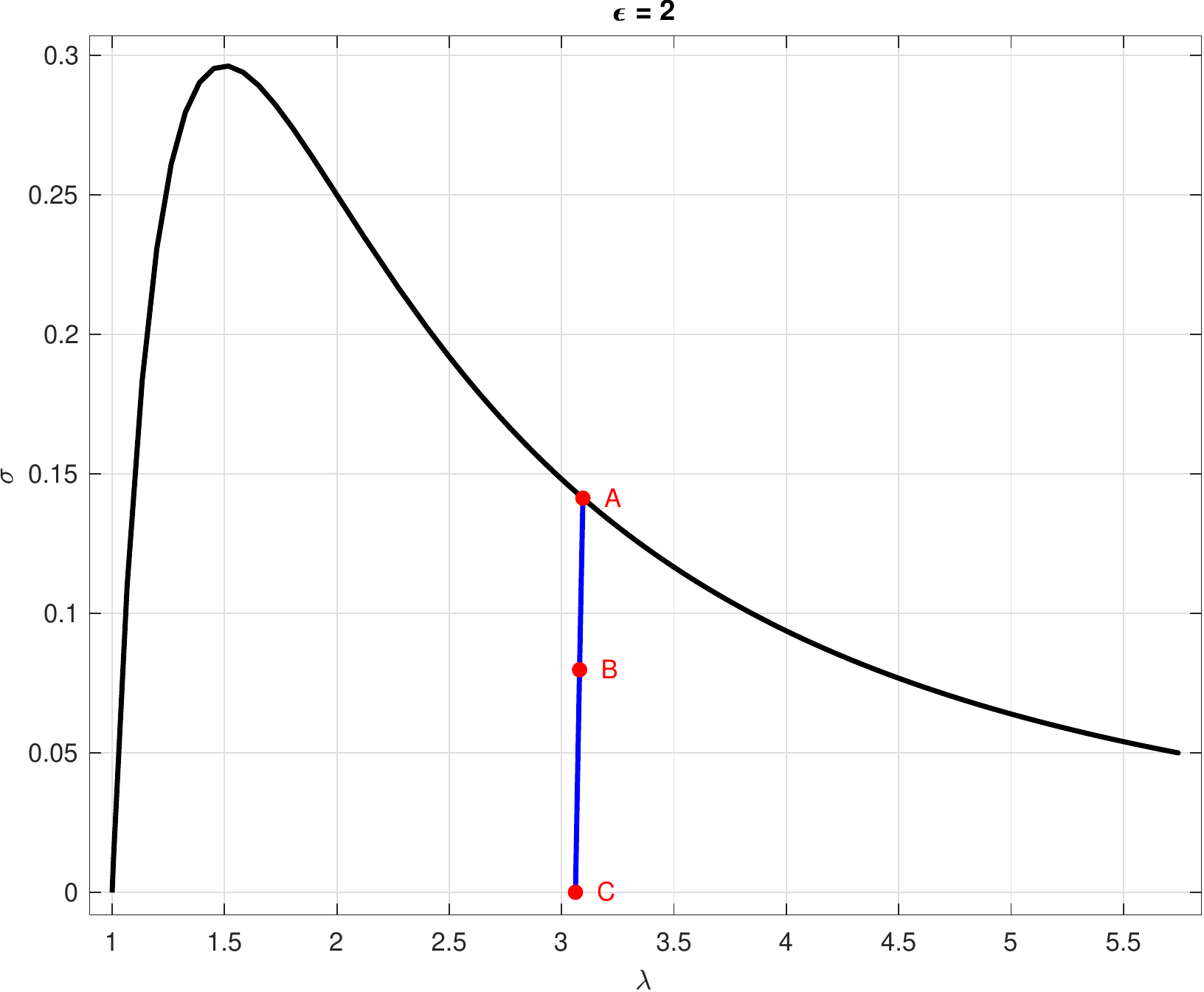}
  \label{Fig8b}} 
  \caption{The homogeneous stress \textit{vs} stretch trivial branch (black) and the projection $\mathcal{T}$ (blue) of the first bifurcation branch for the special constitutive law \eqref{WW} in Example \ref{ex5.10}. (A)  $ \ep  =  0.01$; (B) $ \ep  =  2$. Point $A$ is the bifurcation point $(\la_1,\dot W(\la_1)$. Point $C$ is the fracture point $(\la_*,0)$.}
  \label{Fig8}
\end{figure}
\begin{figure}
    \centering
      \subfloat[]{\includegraphics[width=0.4\textwidth]{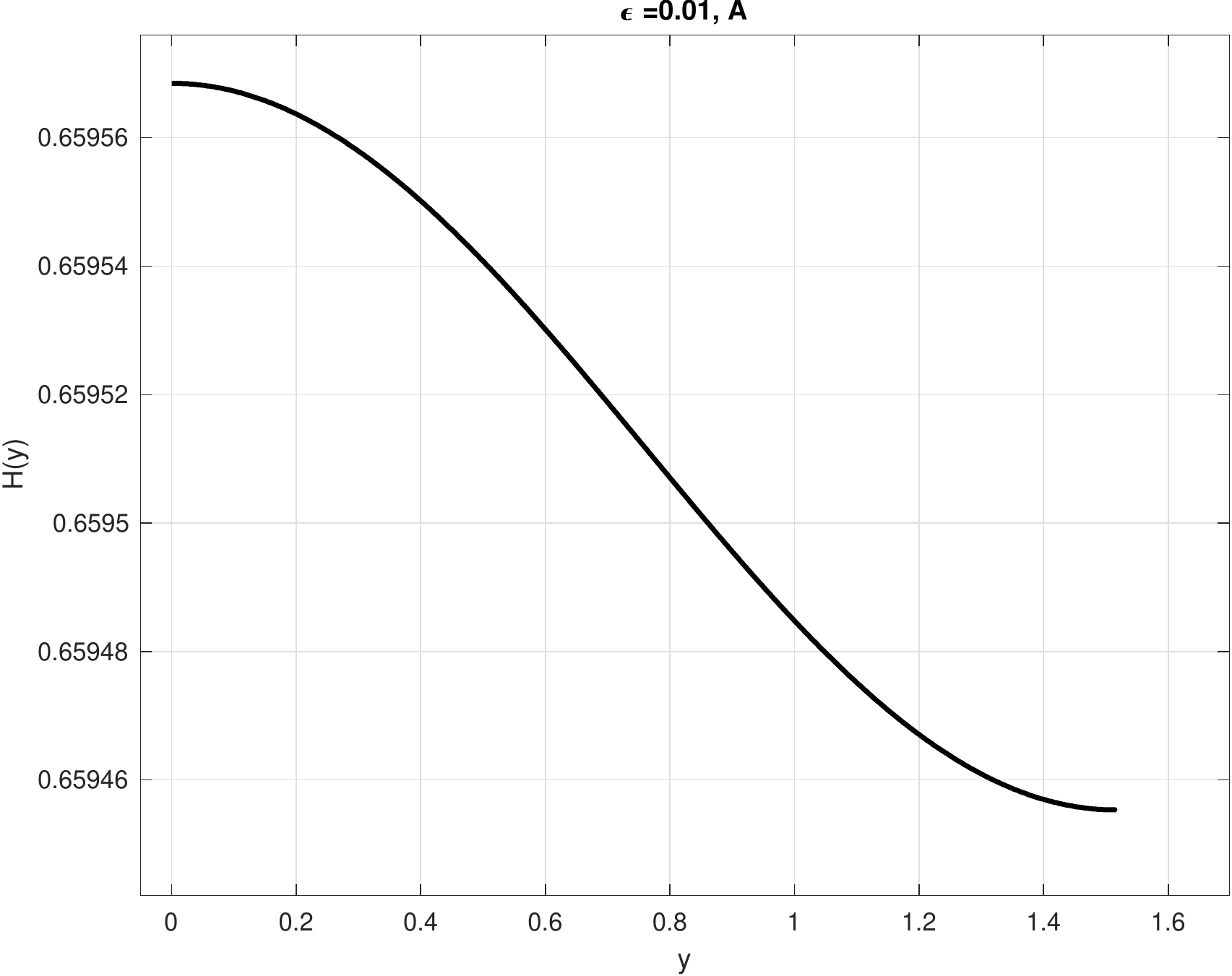}    \includegraphics[width=0.4\textwidth]{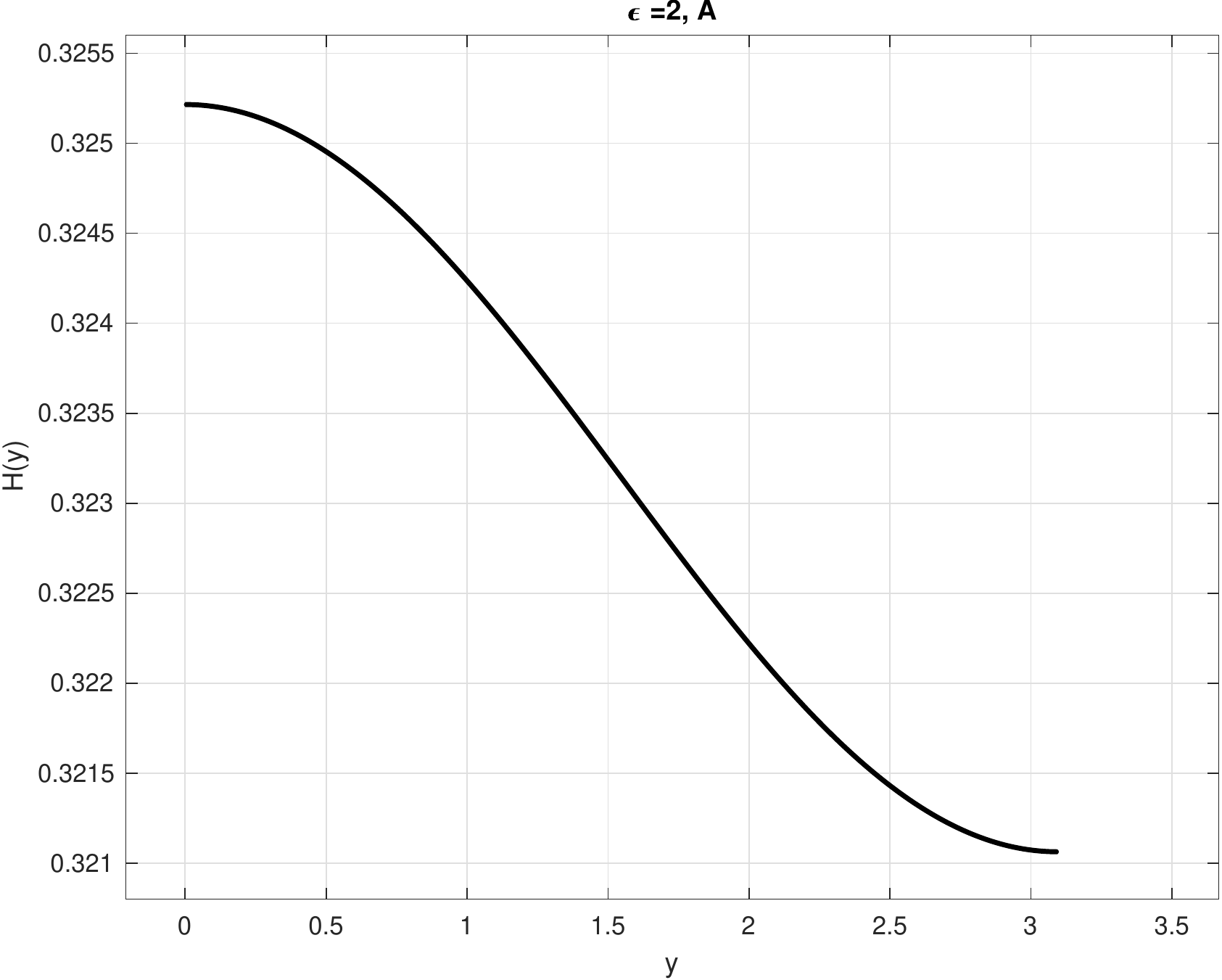}
      \label{Fig9a}}
   \vskip.5cm
    \subfloat[]{\includegraphics[width=0.4\textwidth]{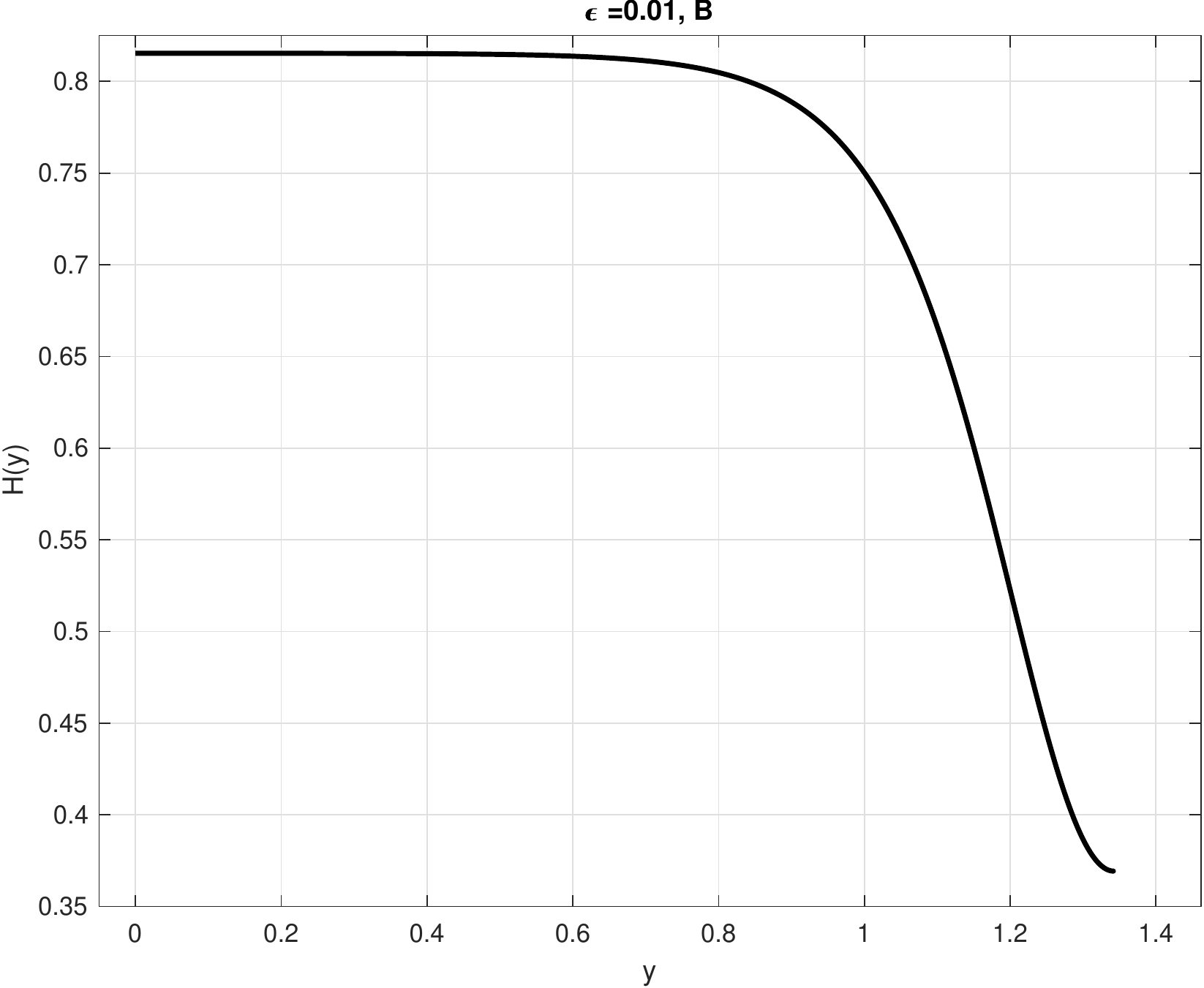}
        \includegraphics[width=0.4\textwidth]{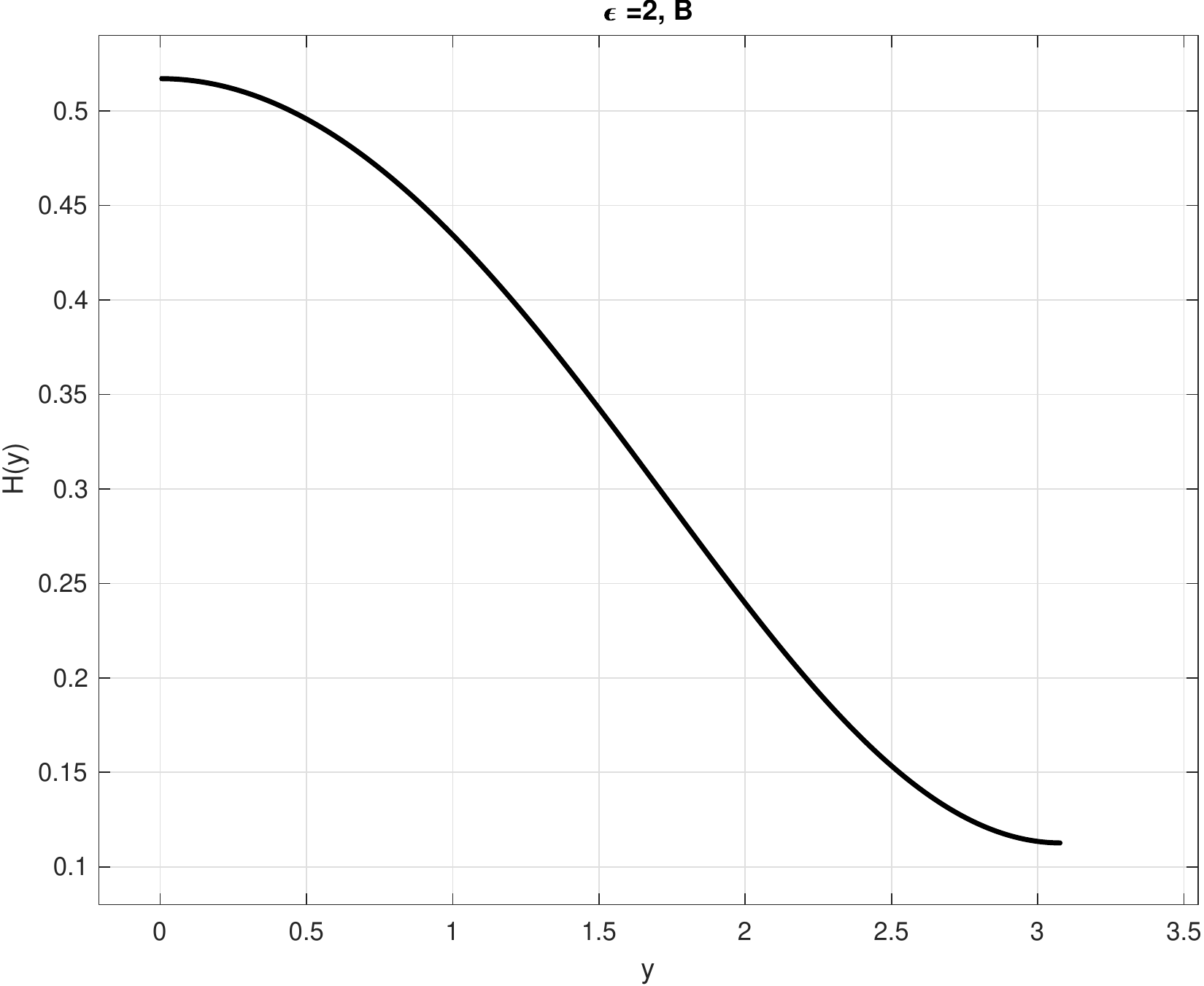}
      \label{Fig9b}}  
        \vskip.5cm
    \subfloat[]{\includegraphics[width=0.4\textwidth]{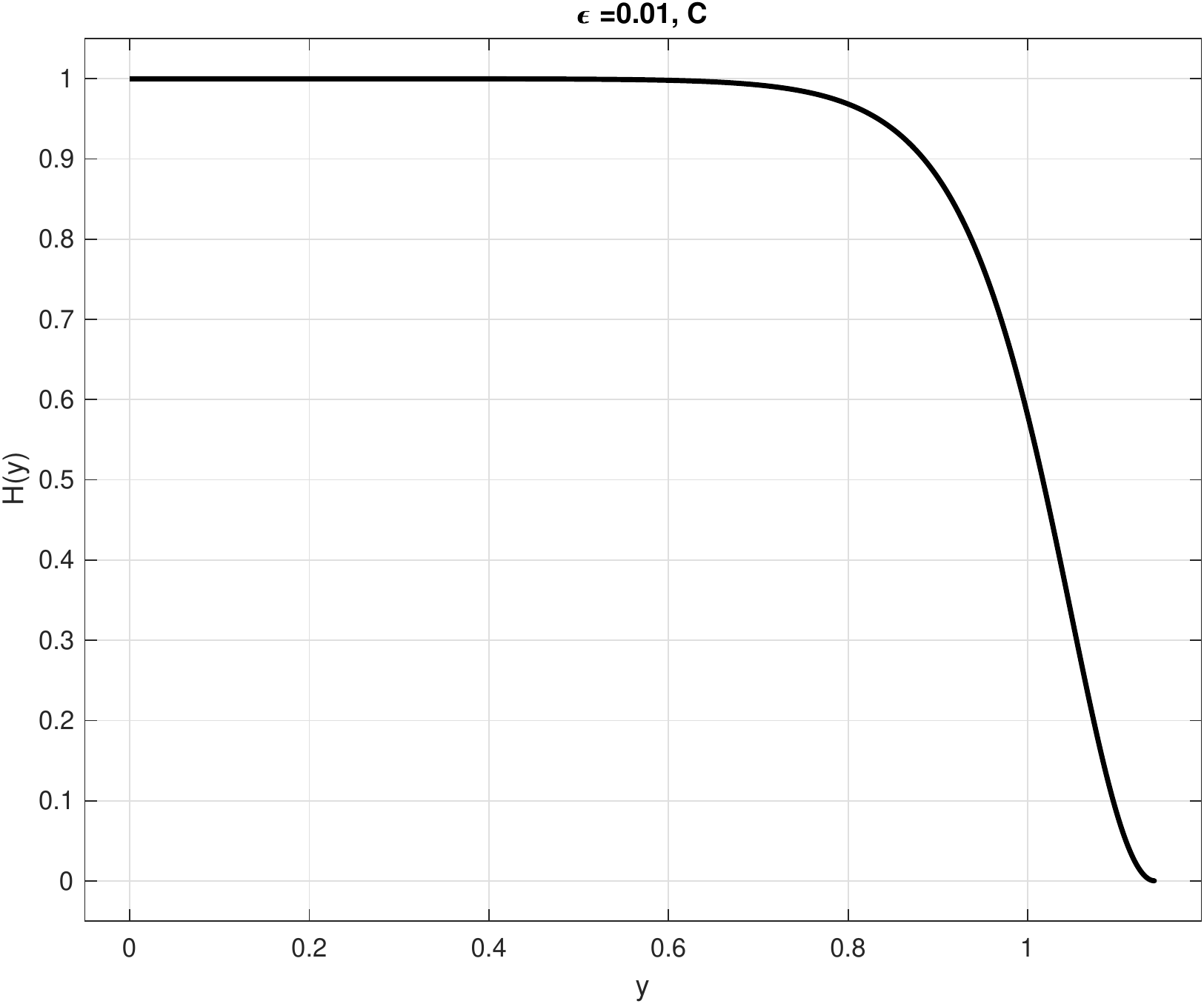}
        \includegraphics[width=0.4\textwidth]{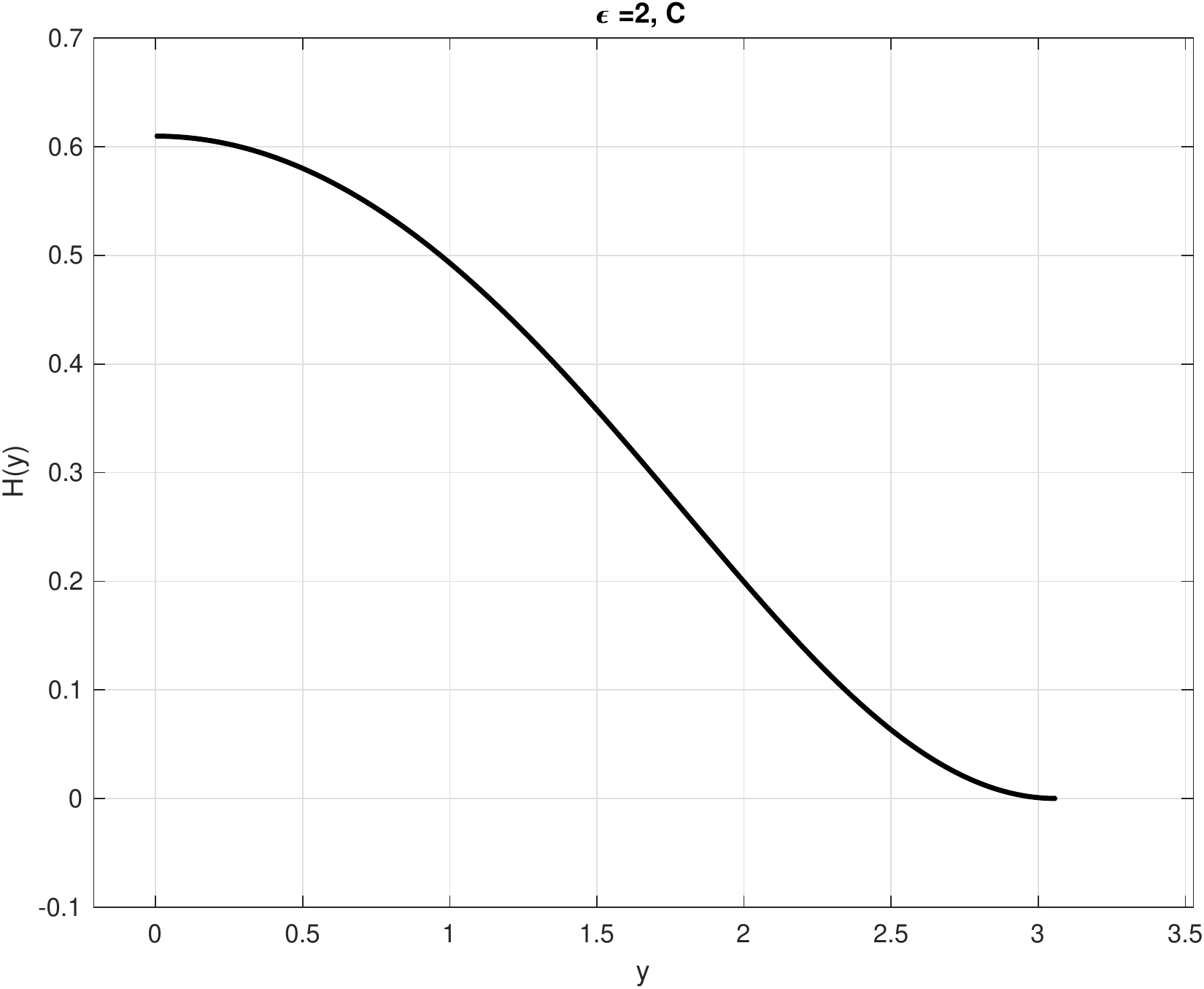}
      \label{Fig9c}}
\caption{Bifurcated first-branch solutions  $H(y)$ \textit{vs} $y$ from points A, B, C along the first-branch projection in Fig.~\ref{Fig8}  for the special constitutive law \eqref{WW} in Example \ref{ex5.10}.  Left column:  $ \ep  =  0.01$; right column: $ \ep  =  2$. (A) near the bifurcation point; note small variation of $H$. (B) At an intermediate point along the sub-branch. (C) at the fracture point.}
 \centering
 \label{Fig9}
\end{figure}
This abides by \eqref{eq-2} except for smoothness, as it is $C^1$ but only piecewise $C^2$, but this does not affect our conclusions here.
Suppose we look at solutions of \eqref{eq12} such that $0<H(x)<\kappa$ for $0<x<\lambda$, so that 
\begin{equation}\label{ineq} -1<u(s)<\kappa\lambda-1, \quad s\in(0,1)\end{equation}
 in  \eqref{eq12} (this demands that $\lambda>1/\kappa$). Then  \eqref{eq12} becomes linear and reduces exactly to \eqref{eq27}, whose solutions are 
$u(s)=A\cos(n\pi s)$, $0\le s\le 1$, for some constant $A$, with $\lambda$ restricted to satisfy \eqref{eq29}, namely $\lambda=\lambda_n= n\pi{\sqrt{\varepsilon/2}}$. The bifurcation condition thus holds all along  each branch, from $A=0$ at bifurcation, all the way to  fracture where $|A|=1$, so that $u=-1$ at one end. The second inequality in \eqref{ineq}  then asserts $\kappa\lambda\ge2$, which is equivalent to 
 $ \varepsilon  \ge  16/\pi^{2} $  for  $ n=1$.  The projection of each of the 
branches on the  $ (\lambda ,\sigma ) $  plane is a vertical line from the 
bifurcation point  $ (\lambda_{n} ,\Dot{{W}}^{\ast }(\lambda_{n} )) $  on the 
trivial branch, all the way down to $ (\lambda ,\sigma )=(\lambda_{n} ,0)$.  
In particular, note that  $ \lambda_{\ast } =\lambda_{1} =\pi \sqrt 
{\varepsilon /2} $ and the projection $\mathcal{T}$  of the first bifurcation branch is the orange vertical line in Fig.~\ref{Fig7}..  According to the construction \eqref{eq44}, \eqref{eq45}, the broken 
part of the branch for all  $ \lambda >\lambda_{1}  $  corresponds to
\begin{equation}
\label{eq62}
u(s)=\begin{cases}
 (\lambda/\lambda_1 )\cos(\lambda \pi s/\lambda_1), & s\in [0,\lambda_{1} /\lambda ),  \\
 -1,&  s\in [\lambda_1 /\lambda ,1]. 
\end{cases} 
\end{equation}
The bifurcation condition \eqref{eq29} holds along the entire branch 
 $ \mathcal{C}_{1}^{o}  $  in this case, which implies that the second variation 
\eqref{eq47} can vanish. To see this, note that integration by parts in \eqref{eq47} 
yields
\begin{equation}
\label{eq63}
\delta^{2}V_{\varepsilon } [\lambda ,u;\eta ]=\int_0^1 {[-\varepsilon {\eta 
}''} +\lambda^{2}\Ddot{{W}}^{\ast }([1+u]/\lambda )\eta ]\eta ds,
\end{equation}
for all  $ \eta \in \mathcal{H}$.  Here  $ \Ddot{{W}}^{\ast }\equiv -2, $  and the 
integrand vanishes at  $ \lambda =\lambda_{1}  $  and  $ \eta (s)=\cos (\pi s), $  
as already noted. In fact, we claim that the second variation is positive 
semi-definite: If we restrict variations to the orthogonal complement of 
 $ \mbox{span}\{\cos (\pi s)\} $ in $ \mathcal{H}, $ denoted 
 $ \Tilde{{\mathcal{H}}}, $ then the sharp Poincar\'{e} inequality \eqref{eq49} now 
incorporates the second eigenvalue of the operator, viz.,  $ 4\pi^{2}, $ and 
for  $ 0<\left| A \right| \le  1, $ we obtain 
\begin{equation}
\label{eq64}
\delta^{2}V_{\varepsilon } [\lambda_{1} ,A\cos (\pi s);\eta ] \ge  
3\varepsilon \pi^{2},
\end{equation}
for all  $ \eta \in \Tilde{{\mathcal{H}}}$.  Hence, in this special case,  $ u $ is 
locally ``neutrally'' stable for all  $ (\lambda ,u)\in \overline 
{\mathcal{C}_{1}^{o} } \backslash \{(\lambda_{1} ,0)\}. $  
\end{ex}
\begin{ex}\label{ex5.10} \rm Next we consider  
\beq\label{WW}   W(F)=(1-1/F)^{2}\implies W^{\ast }(H)=H(1-H)^{2}, \eeq  
which follows from 
 from \eqref{shield} and also fulfills our hypotheses \eqref{eq-2}. See Fig.~\ref{Fig2} for $W$ and $\sw\!$. It is enough to 
solve \eqref{eq12} in order to compute the sub-branch  $ \mathcal{C}_{1}^{o}  $  and 
its projection onto the  $ (\lambda ,\sigma ) $  plane. For that purpose, we 
used the bifurcation/continuation software AUTO  \cite{auto}. In Fig.~\ref{Fig8a} and Fig.~\ref{Fig8b} we 
depict the computational results for the cases  $ \varepsilon =0.01 $ and 
 $ \varepsilon =2, $ respectively. The computed values of  $ \lambda_{\ast }  $  in 
these two cases are approximately $1.14$ and $3.06$, respectively. In each of 
the two figures we specify three points along the curve at which the 
corresponding computed solution configurations are depicted in Fig.~\ref{Fig9}. For $\ep=0.01$, observe the development of a transition layer from $H\approx1$ to $H\approx0$ on the right-hand side of the bar in Fig.~\ref{Fig9b} and Fig.~\ref{Fig9c}. Fracture occurs at the rightmost point $y=\la_\ast$ where $H(\la_\ast)=0$ in Fig.~\ref{Fig9c}. .
\par
We also use the semi-analytical solution of Proposition \ref{prop5.3}(i).  We numerically compute the set of $(H_1,H_2)$ satisfying \eqref{cond2}$_2$ and \eqref{cond1}.  We then compute the corresponding $(\la,\si)\in\TT$ from \eqref{cond2}$_2$ and \eqref{ss}.  The projection $\TT$ of the first branch so obtained is shown in Fig.~\ref{Fig4} for three values of $\ep$. As predicted by part (ii) of Proposition \ref{prop5.3}, as $\ep$ decreases, $\TT$ tends towards the rising branch of the stress-stretch curve.  
 \end{ex}
 
 \begin{rem} In both the above examples, a quasistatic increase of  average stretch $\la$ (equivalently the  end displacement  of the bar) will cause a sudden drop of the stress $\si$ to zero upon fracture. This will occur at some stretch level that  is equal to the bifurcation point $\la_1$ in Example \ref{ex5.9}, or  some value between $\la_ast$ and $\la_1$ for Example \ref{ex5.10},
depending on one's favorite notion of stability (local  \textit{vs}  global), and perhaps ambient disturbances or imperfections. Many nonlocal or cohesive zone models \cite{triant,dPTrusk}
instead predict a load drop to a positive stress, followed by a gradual asymptotic decrease to zero.  In contrast a discrete model exhibits sudden drop to zero stress for long bars only, after assuming that ``interatomic springs'' lose force (break) at finite stretch. We do not need to assume this for our constitutive law. For shorter bars our model predicts a larger fracture stretch and a smaller stress drop to zero at fracture. \end{rem}

\bibliographystyle{unsrt}

\bibliography{FractureBib}

\end{document}